 \theoremstyle{ams-remark}
\newtheorem{thrmm}{Theorem}
\renewcommand{\le}{\leqslant}
\renewcommand{\ge}{\geqslant}
\let\intt\int
\renewcommand{\int}{\intt\limits}
\newcommand{\C}{\mathbb{C}} 						
\newcommand{\D}{\mathbb{D}} 						
\newcommand{\R}{\mathbb{R}}		                    
\newcommand{\N}{\mathbb{N}}	                	    
\newcommand{\TT}{\mathbb{T}}	 					
\renewcommand{\H}{\mathcal{H}}                      
\newcommand{\M}{\mathcal{M}}                        
\newcommand{\B}{\mathbb{B}}                         
\renewcommand{\a}{\alpha}									
\renewcommand{\b}{\beta}									
\newcommand{\de}{\delta}									
\newcommand{\e}{\varepsilon}								
\newcommand{\z}{\zeta}										
\renewcommand{\k}{\kappa}									
\renewcommand{\l}{\lambda}									
\let\originalnu\nu											
\renewcommand{\nu}{\originalnu} 					        
\newcommand{\f}{\varphi}									
\DeclareMathOperator{\Cl}{Cl}						    
\renewcommand{\Re}{\operatorname{Re}} 		    	
\DeclareMathOperator{\Span}{span}							
\DeclareMathOperator{\id}{id}								
\DeclareMathOperator{\Mult}{Mult}
\newcites{pubinfo}{Acknowledgement page references}
\begin{document}


\makefrontmatter

%
%
\chapter{Introduction}
\label{chap:intro}

    \section{The isomorphism problem}
        The main problem we are interested in is the isomorphism problem for multiplier algebras of varieties in the unit ball. In this section we state this problem in the general setting.
    
        Let $\B_d, \ d \in \N \cup \{ \infty \}$ denote the Euclidean open unit ball in $\C^d$, where $\C^d$ means $\ell^2(\N)$ for $d = \infty$. The main function space on $\B_d$ we consider is the Drury-Arveson space.
        \begin{definition}
            \emph{The Drury-Arveson space on $\B_d$} is
            \begin{equation*}
                H^2_d = \left\{ f(z) = \sum_{\a \in \N_0^d} c_{\a} z^{\a}: \: ||f||^2 = \sum_{\a \in \N_0^d} |c_{\a}|^2 \frac{\a!}{|\a|!} < \infty \right\}.
            \end{equation*}
            It is a reproducing kernel Hilbert space with kernel
            \begin{equation*}
                \k(z, w) = \frac{1}{1 - \langle z, w \rangle_{\C^d}}.
            \end{equation*}
        \end{definition}
        This space is one of the multivariate generalizations of the Hardy space $H^2(\D)$ on the unit disc, i.e., $H^2(\D) = H^2_1$.
        The Drury-Arveson space naturally arises in operator theory as well as in theory of complete Pick spaces, see the survey \cite{shalit2015operator} or the more recent \cite{hartz2023invitation} for an introduction.
        
        We denote by $\M_d = \Mult (H^2_d)$ the multiplier algebra of $H^2_d$. We say that $V \subset \B_d$ is a (multiplier) \emph{variety} if it is a joint zero set of functions from $\M_d$, i.e., there exists $E \subset \M_d$ such that
        \begin{equation*}
            V = \{ z \in \B_d: \f(z) = 0, \text{ for all } \f \in E \}.
        \end{equation*}
        Note that for $d < \infty$ we can replace $E$ by a finite set (even of cardinality at most $d$), see the argument in \cite[Chapter 5, Section 7]{chirka1989complex}. In particular, such $V$ is an analytic variety in $\B_d$.
        
        
        For a variety $V \subset \B_d$ the associated Hilbert space of functions on $V$ is $\H_V \subset H^2_d$:
        \begin{equation*}
            \H_V = \Cl \Span \{ \k_{\l}: \l \in V \} = \{ f \in H^2_d: \: f|_V = 0 \}^{\perp}.
        \end{equation*}        
        This Hilbert space is a reproducing kernel Hilbert space of functions on $V$. 
        
        Next, we define the algebra associated to $V$:
        \begin{equation*}
            \M_V = \{ \f|_V: \f \in \M_d \}.
        \end{equation*}
        By \cite[Proposition 2.6]{DSR15}, $\M_V$ is exactly the multiplier algebra $\Mult(\H_V)$, and $\M_V$ is completely isometrically isomorphic to the quotient $\M_d / J_V$, where
        \begin{equation*}
            J_V = \{ \f \in \M_d: \f|_V = 0 \}.
        \end{equation*}
        Moreover, $\M_V$ is contained in $H^{\infty}(V)$, the algebra of bounded analytic functions on $V$, but might not coincide with it.
    
        \textbf{The isomorphism problem:} When is $\M_V \cong \M_W$ equivalent to $V \cong W$? 
        \\
        The answer heavily depends on the notion of isomorphism for both varieties and algebras. 
        We recall a few possible notions. First, $V$ is said to be \emph{an automorphic image} of $W$ in $\B_d$ if there exists $\mu$, an automorphism of $\B_d$, such that $V = \mu(W)$. Since $\mu^{-1}$ is also an automorphism of $\B_d$ this notion is symmetric with respect to $V$ and $W$. We say that $V$ and $W$ are \emph{biholomorphic} if there are holomorphic in $\B_d$ maps $F$ and $G$ such that $F|_V$ is a bijecton from $V$ to $W$ and $G|_W$ is its inverse. Finally, we say that $V$ and $W$ in $\B_d$ are \emph{multiplier biholomorphic} if they are biholomorphic with respect to the maps $F, G$ which are coordinate multipliers, i.e.,
        $
            F, G \in \underbrace{\M_d \times \ldots \times \M_d}_{d}.     
        $

        We refer to the survey by Salomon and Shalit \cite{Sal} for an overview of the available answers to the isomorphism problem:
        \begin{itemize}
            \item $\M_V \cong \M_W$ isometrically if and only if $\H_V \cong \H_W$ as reproducing kernel Hilbert spaces if and only if $V$ is an automorphic image of $W$, where $d < \infty$ or $V$ and $W$ have the same affine codimension, \cite[Proposition 4.8, Theorem 4.6]{Sal}, \cite{DSR15}. 
            
            Similarly, Rochberg, \cite[Theorem 7]{rochberg2019complex}, has shown that for ordered finite sets $V, W$, $\H_V \cong \H_W$ if and only if $V$ is an automorphic image of $W$, adding some concrete quantitative conditions for an isomorphism to exist, such as coinciding ``normal forms'' or having the same Gram matrix. Rochberg also showed that $V \cong W$ if and only if all triples in $V$ are congruent to all triples in $W$.
            \item $\M_V \cong \M_W$ algebraically is equivalent to $V$ and $W$ being multiplier biholomorphic for homogeneous varieties $V$ and $W$ if $d < \infty$, \cite[Theorem 5.14]{Sal}, \cite{DRS11}, \cite{HarTop}.
            \item $\M_V \cong \M_W$ algebraically implies that $V$ and $W$ are multiplier biholomorphic, for $V$ and $W$ which are irreducible varieties (or finite unions of irreducible varieties and a discrete variety), \cite[Theorem 5.5]{Sal}, \cite{DSR15}.
            \item $V \cong W$ via a biholomorphism that extends to be a 1-to-1 $C^2$-map on the boundary implies $\M_V \cong \M_W$ algebraically, for $V$ and $W$ --- images of finite Riemann surfaces under a holomap that extends to be a 1-to-1 $C^2$-map on the boundary, \cite[Corollary 5.18]{Sal}. This is the result of an idea from \cite{Vin} being generalized in \cite[Section 2.3.6]{ARS08} and culminating in \cite{Kerr}.
            \item For $V \subset \B_{\infty}$ of the form $V = f(\D)$ with
            \begin{equation*}
                f(z) = (b_1 z, b_2 z^2, b_3 z^3, \ldots), \quad \text{ where } b_1 \ne 0 \text{ and } ||b||_2^2 = 1 
            \end{equation*}
            $\M_V = H^{\infty}(V) \cong H^{\infty}(\D)$ if and only if $\sum n |b_n|^2 < \infty$, while any two such varieties are multiplier biholomorphic, \cite[Corollary 7.4]{Dav}.
        \end{itemize}
         It is still an open question whether $V \cong W$ via a multiplier biholomorphism implies $\M_V \cong \M_W$ algebraically for sufficiently simple $V$ and $W$. Though, it is known to be false for $V$, $W$ --- discrete varieties, see \cite[Example 5.7]{Sal}.

    \section{Analytic Discs}     
    
        In this paper we study the isomorphism problem in the case where $V$ and $W$ are analytic discs attached to the unit sphere. 
        \begin{definition} \label{def: analytic disc}
            \emph{An analytic disc attached to the unit sphere} is a variety $V \subset \B_d, \, d < \infty$ for which there exists an injective analytic map $f: \D \to \B_d$ with $f'(z) \ne 0, \, z \in \D$ such that $V = f(\D)$, $f$ extends to $C^2$ up to $\overline{\D}$ and 
            \begin{equation*}
                ||f(x)|| = 1 \iff |x| = 1.  
            \end{equation*}            
            We say that $f$ is \emph{an embedding map} of $V$.
        \end{definition}
        Note that by \cite[Corollary 3.2]{Dav} such discs meet the boundary transversally, i.e., 
        \begin{equation*}
            \langle f(\xi), f'(\xi) \rangle \ne 0, \quad \xi \in \TT.     
        \end{equation*}
        We emphasize that $V$ is defined to be a variety. It is not clear whether for an arbitrary $f$ satisfying properties from Definition \ref{def: analytic disc} the image $V = f(\D)$ is a variety.
        
        Instead of working directly with the spaces $\H_V$ and $\M_V$, it is more convenient to pull back this spaces from the variety $V$ to the unit disc $\D$.
        We denote by $\H_f$ an RKHS on $\D$ that we get from $\H_V$ by composing with $f$:
        \begin{equation*}
            \H_f = \{ h:\D \to \C: \: h \circ f^{-1} \in \H_V \}, \qquad ||h||_{\H_f} = || h \circ f^{-1} ||_{\H_V}.
        \end{equation*}
        The reproducing kernel of this space is
        \begin{equation} \label{eq: k^f formula}
            \k^f(z, w) = \frac{1}{1 - \langle f(z), f(w) \rangle}.
        \end{equation}
        We denote the multiplier algebra of this space by $\M_f$, so that
        \begin{equation*}
            \M_f = \{ \f:\D \to \C: \: \f \circ f^{-1} \in \M_V \}, \qquad ||\f||_{\M_f} = || \f \circ f^{-1} ||_{\M_V}.
        \end{equation*}
    
        The isomorphism problem was partially solved in the case when $f$ extends injectively to $\overline{\D}$. We state the results \cite[Proposition 2.2, Theorem 2.3]{Vin} combined with the above-mentioned fact that the transversality condition is satisfied automatically.
        \begin{thrmm} \label{thrmm: injective embedding}
            Suppose $V$ is an analytic disc attached to the unit sphere, $f$ is the embedding map of $V$. If the extension of $f$ to $\overline{\D}$ is injective, then 
            \begin{itemize}
                \item $\H_f = H^2(\D)$ with equivalent norms,
                \item $\M_f = H^{\infty}(\D)$ with equivalent norms.
            \end{itemize}
        \end{thrmm}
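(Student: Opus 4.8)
The plan is to prove the set-theoretic equality $\H_f = H^2(\D)$ and to deduce both bullet points from it. If $\H_f$ and $H^2(\D)$ coincide as sets of holomorphic functions on $\D$, then the identity map between them is a linear bijection with closed graph — convergence in either space forces pointwise convergence, so a sequence converging in both must have the same limit — and the closed graph theorem yields equivalence of the two norms. The multiplier statement is then immediate: two reproducing kernel Hilbert spaces on the same set that are equal with equivalent norms have the same multiplier algebra with equivalent multiplier norms, whence $\M_f = \Mult(\H_f) = \Mult(H^2(\D)) = H^{\infty}(\D)$ with equivalent norms. So everything reduces to showing $\H_f = H^2(\D)$ as sets.

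To prove this equality I would compare reproducing kernels. By Aronszajn's inclusion theorem, $\H_f$ and $H^2(\D)$ coincide with equivalent norms exactly when their kernels are comparable in the positive-definite order, i.e. when there are constants $0 < c \le C$ with $c\, s \preceq \k^f \preceq C\, s$, where $s(z,w) = (1 - z\bar w)^{-1}$ is the Szegő kernel. After composing $f$ with an automorphism of $\B_d$ sending $f(0)$ to $0$ — this replaces $V$ by an automorphic image, hence changes $\H_V$ only by a unitary, and preserves all hypotheses of Definition~\ref{def: analytic disc} — I may assume $f(0) = 0$. Then $\k^f(\cdot,0) = 1 = s(\cdot,0)$, both kernels are normalized, and the comparison is controlled by the single function
\[
\Psi(z,w) = \frac{1 - \langle f(z), f(w) \rangle}{1 - z\bar w} = \frac{s(z,w)}{\k^f(z,w)}.
\]

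The crux is to control $\Psi$. First, the numerator and denominator are sesqui-analytic and, on $\overline{\D} \times \overline{\D}$, vanish only on the boundary diagonal $\{(\xi,\xi): \xi \in \TT\}$: for $1 - z\bar w$ this is clear, while $\langle f(\xi), f(\eta) \rangle = 1$ together with $\|f(\xi)\| = \|f(\eta)\| = 1$ forces $f(\xi) = f(\eta)$ and hence $\xi = \eta$ by the injectivity of $f$ on $\overline{\D}$. This is precisely the hypothesis that fails when the boundary has self-crossings, and its failure produces the extra singularities of $\k^f$ studied later in the thesis. Along the diagonal, writing $z = w = r\xi$ and letting $r \to 1^-$, one has $1 - r^2 \sim 2(1 - r)$, and since $f \in C^2(\overline{\D})$ and $r \mapsto \|f(r\xi)\|^2$ attains its maximum value $1$ at $r = 1$, a Taylor expansion gives $1 - \|f(r\xi)\|^2 \sim 2\,\Re A_f(\xi)\,(1 - r)$, where $A_f(\xi) = \langle f(\xi), f'(\xi)\xi \rangle$. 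The maximum forces $\Re A_f(\xi) \ge 0$, and transversality upgrades this to $\Re A_f(\xi) > 0$; by continuity and compactness of $\TT$ it is bounded away from $0$ and $\infty$. Thus $\Psi(\xi,\xi) = \Re A_f(\xi)$ is positive and bounded on $\TT$, and $\Psi$ is positive on the interior diagonal; the content of the estimate is that $\Psi$ is comparable to a positive constant throughout $\overline{\D} \times \overline{\D}$.

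The hard part, which I expect to be the main obstacle, is to pass from this pointwise control of $\Psi$ to the genuine positive-definite comparison $c\, s \preceq \k^f \preceq C\, s$: a two-sided bound on $\Psi$ does not by itself order the kernels, because $\Psi$ is merely sesqui-analytic and need not be a Schur multiplier. The route I would take is the boundary-value picture suggested by the diagonal estimate. Using the description $\H_f = \{F \circ f : F \in H^2_d\}$ and the $C^2$-regularity of $f$ up to $\TT$, I would show that functions in $\H_f$ have nontangential boundary values on $\TT$ and that the resulting map identifies $\H_f$, up to equivalence of norms, with $L^2(\TT, d\mu)$ for a measure $d\mu \asymp d\theta$; the comparability of $d\mu$ with arc length is exactly the estimate $1 - \|f(r\xi)\|^2 \asymp (1-r)\,\Re A_f(\xi)$, with comparison constants $\inf_{\TT} \Re A_f$ and $\sup_{\TT} \Re A_f$. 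Since $H^2(\D)$ is identified with $L^2(\TT, d\theta)$ in the same way, the two spaces then coincide as sets, completing the argument. Establishing this boundary-value theory rigorously — honest nontangential limits lying in the correct $L^2$ space, and detection of membership there — is where the regularity hypotheses do their real work.
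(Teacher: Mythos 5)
The paper has no proof of this statement to compare against: it is Theorem~A, imported verbatim from Alpay--Putinar--Vinnikov \cite[Proposition 2.2, Theorem 2.3]{Vin} together with the automatic transversality of \cite[Corollary 3.2]{Dav}, so your attempt must be measured against APV's argument rather than anything in the thesis. Your first two paragraphs are correct and are indeed the standard opening moves: the closed-graph reduction of norm equivalence to set equality, the passage to multiplier algebras, Aronszajn's two-sided domination criterion $c\,s \preceq \k^f \preceq C\,s$, the normalization $f(0)=0$ by a ball automorphism, the Cauchy--Schwarz-plus-injectivity argument that $1 - \langle f(z), f(w)\rangle$ vanishes on $\overline{\D}\times\overline{\D}$ only on the boundary diagonal, and the diagonal asymptotics $1 - \|f(r\xi)\|^2 \sim 2A_f(\xi)(1-r)$. (One small hole there: $A_f(\xi)\ne 0$ with $\Re A_f(\xi)\ge 0$ does not by itself give $\Re A_f(\xi)>0$; one must first note $\Im A_f(\xi)=0$, obtained by differentiating $\|f(e^{it})\|^2\equiv 1$ along $\TT$ --- this is the content of \cite[Proposition 3.1, Corollary 3.2]{Dav}.) You also correctly flag that a two-sided pointwise bound on $\Psi$ does not order the kernels in the positive-definite sense.

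The genuine gap is your third paragraph, and it is the whole theorem. The proposed boundary-value identification of $\H_f$, up to equivalent norms, with a space over a measure $\mu \asymp d\theta$ is essentially a restatement of $\H_f = H^2(\D)$, not a route to it (and as stated it is wrong: the map could identify $\H_f$ at best with a closed Hardy-type subspace of $L^2(\TT,\mu)$, never with all of $L^2(\TT,\mu)$). More seriously, the only quantitative input you offer to drive it is the diagonal estimate $1 - \|f(r\xi)\|^2 \asymp (1-r)$, which uses only transversality and therefore holds for \emph{every} analytic disc attached to the unit sphere, with or without boundary self-crossings --- yet the conclusion fails in the presence of self-crossings. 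The paper's own example makes this concrete: for $f_r(z) = \frac{1}{\sqrt{2}}\left(z^2, b_r(z)^2\right)$ the same diagonal asymptotics hold, but Theorem \ref{thrm: 1 = -1} shows every $h \in \H_{f_r}$ with radial limits at $\pm 1$ satisfies $h(1) = h(-1)$, so $z \notin \H_{f_r}$ and $\H_{f_r} \ne H^2(\D)$ as sets. (Abstractly, diagonal comparability never implies equality of reproducing kernel spaces: $(1 - z^2\bar{w}^2)^{-1}$ has diagonal comparable to the Szeg\H{o} kernel, but its space is the even subspace of $H^2(\D)$.) So any correct proof must feed the off-diagonal lower bound on $|1 - \langle f(z), f(w)\rangle|$ --- that is, boundary injectivity --- into the positivity comparison itself, which your sketch never does after establishing it; accomplishing exactly this, using the $C^2$ regularity to convert control of the sesqui-analytic ratio $\Psi$ into the two-sided domination $c\,s \preceq \k^f \preceq C\,s$, is precisely the non-formal content of \cite[Proposition 2.2]{Vin}. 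As it stands your proposal correctly assembles the known reductions and then defers the theorem's actual substance to an unproven, and in the indicated form unprovable, identification.
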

    
        We now state our results.
        \begin{theorem} \label{thrm: algebras to crossings}
            Suppose $V$, $W$ are analytic discs attached to the unit sphere, and $f$, $g$ are the respective embedding maps. If $\M_f = \M_g$, then
            \begin{equation*}
                f(\xi) = f(\zeta) \iff g(\xi) = g(\zeta), \quad \xi, \zeta \in \TT. 
            \end{equation*}
            This means that $f$ and $g$ have the same self-crossings on the boundary.
        \end{theorem}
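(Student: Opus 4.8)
The plan is to extract from the \emph{normed} algebra $\M_f$ alone a quantity that, in a boundary limit, detects exactly the self-crossings of $f$, and then observe that this quantity is literally the same for $g$. The engine is that $\H_f$ is a complete Pick space: its kernel $\k^f(z,w)=1/(1-\langle f(z),f(w)\rangle)$ is the Drury--Arveson kernel relabelled along $f$, and since the Drury--Arveson kernel is the universal complete Pick kernel, restrictions and relabellings of it stay complete Pick. Consequently the scalar two-point Pick problem on $\H_f$ is solved exactly by positivity of the Pick matrix, which lets me read normalized-kernel data off the algebra.

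Concretely, for $z,w\in\D$ I would set
\[
\rho_f(z,w)=\sup\{\,|\f(w)| : \f\in\M_f,\ \|\f\|_{\M_f}\le 1,\ \f(z)=0\,\},
\]
a number depending only on $\M_f$ as a normed algebra of functions on $\D$ (point evaluations plus the multiplier norm). Writing $\widehat{\k^{f}_{z}}:=\k^{f}(\cdot,z)/\k^{f}(z,z)^{1/2}$ for the normalized kernel, the always-valid necessary part of Pick's condition gives $|\f(w)|\le\bigl(1-|\langle \widehat{\k^{f}_{z}},\widehat{\k^{f}_{w}}\rangle|^{2}\bigr)^{1/2}$ for every contractive $\f$ vanishing at $z$, while the complete Pick property makes the bound attained, so
\[
\rho_f(z,w)^{2}=1-\bigl|\langle \widehat{\k^{f}_{z}},\widehat{\k^{f}_{w}}\rangle\bigr|^{2}.
\]
Hence $\M_f=\M_g$ as normed function algebras forces $\rho_f\equiv\rho_g$, and therefore $\bigl|\langle \widehat{\k^{f}_{z}},\widehat{\k^{f}_{w}}\rangle\bigr|=\bigl|\langle \widehat{\k^{g}_{z}},\widehat{\k^{g}_{w}}\rangle\bigr|$ for all $z,w\in\D$.

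It remains to show the self-crossings are encoded by these normalized inner products along radial approach; precisely, for $\xi,\zeta\in\TT$ I claim $f(\xi)=f(\zeta)$ iff $\lim_{r\to1^{-}}\bigl|\langle \widehat{\k^{f}_{r\xi}},\widehat{\k^{f}_{r\zeta}}\rangle\bigr|\ne 0$. If $f(\xi)\ne f(\zeta)$ then $\langle f(\xi),f(\zeta)\rangle\ne 1$ (equality with $\|f(\xi)\|=\|f(\zeta)\|=1$ forces $f(\xi)=f(\zeta)$ by the equality case of Cauchy--Schwarz), so $\k^{f}(r\zeta,r\xi)$ stays bounded while $\k^{f}(r\xi,r\xi),\k^{f}(r\zeta,r\zeta)\to\infty$, and the normalized inner product tends to $0$. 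If $f(\xi)=f(\zeta)=:p$, a first-order Taylor expansion using the $C^{2}$ extension of $f$ gives
\[
1-\|f(r\xi)\|^{2}\sim 2(1-r)\Re A_f(\xi),\qquad 1-\langle f(r\zeta),f(r\xi)\rangle\sim (1-r)\bigl(\overline{A_f(\zeta)}+A_f(\xi)\bigr),
\]
so the factors $(1-r)$ cancel in the normalized inner product and
\[
\lim_{r\to1^{-}}\langle \widehat{\k^{f}_{r\xi}},\widehat{\k^{f}_{r\zeta}}\rangle=L:=\frac{2\sqrt{\Re A_f(\xi)\,\Re A_f(\zeta)}}{\overline{A_f(\zeta)}+A_f(\xi)},\qquad |L|^{2}=\frac{4\,\Re A_f(\xi)\,\Re A_f(\zeta)}{\bigl|A_f(\xi)+\overline{A_f(\zeta)}\bigr|^{2}}\in(0,1].
\]
Combining this equivalence for $f$ and for $g$ with the equality of moduli of normalized inner products yields $f(\xi)=f(\zeta)\iff g(\xi)=g(\zeta)$, which is the assertion.

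The main obstacle is the crossing-case limit. Two points need care. First, I must show transversality forces $\Re A_f(\xi)>0$, not merely $A_f(\xi)\ne 0$: this keeps $1-\|f(r\xi)\|^{2}$ genuinely linear in $1-r$ and makes $L$ finite and nonzero; it should follow from $\|f(r\xi)\|$ increasing to $1$ together with $\langle f(\xi),f'(\xi)\rangle\ne 0$ from \cite[Corollary 3.2]{Dav}. Second, the Taylor estimates must be uniform enough that the remainders are truly $o(1-r)$ against the leading $(1-r)$ terms, so the quotient converges rather than merely staying bounded; this is exactly where the $C^{2}$ hypothesis is spent. A secondary point to pin down is that $\M_f=\M_g$ is read as equality of \emph{normed} algebras: only an isometric identification yields $\rho_f\equiv\rho_g$, whereas merely equivalent norms would blur the distinction between the limit being $1$ (no crossing) and being bounded away from $1$ (crossing).
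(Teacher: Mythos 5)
Your reduction of the statement to boundary asymptotics of the normalized kernel is sound as far as it goes, and your first worry is already settled by the cited literature: \cite[Corollary 3.2]{Dav} gives that $A_f(\xi)=\langle f(\xi),f'(\xi)\xi\rangle$ is a \emph{positive real number} (a Julia--Carath\'eodory type fact the paper uses verbatim), so no separate argument for $\Re A_f(\xi)>0$ is needed. The genuine gap is the point you dismiss as ``secondary'': your identity $\rho_f\equiv\rho_g$ requires the two multiplier norms to have the \emph{same} unit ball, but the hypothesis $\M_f=\M_g$ is an equality of sets of functions on $\D$; the two norms are then automatically equivalent (closed graph), not equal. Isometric equality genuinely fails in natural situations --- for an injective disc, Theorem \ref{thrmm: injective embedding} gives $\M_f=H^\infty(\D)$ with equivalent but in general unequal norms --- and, decisively, the paper applies Theorem \ref{thrm: algebras to crossings} to $f$ and $g\circ\mu$ inside the proof of Theorem \ref{thrm: multiplier isomorphism two points to two auto}, where Theorem \ref{thrm: isomorphic algebras to automorphism} supplies only equality with \emph{equivalent} norms, so an isometric-only version would not serve. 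Under mere equivalence your radial detector collapses: the dichotomy you built is ``$\rho_f(r\xi,r\z)\to 1$'' (no crossing) versus ``$\to c<1$'' (crossing), and the only transfer available, $C^{-1}\rho_g\le\rho_f\le C\rho_g$, cannot distinguish a limit equal to $1$ from a limit equal to $c$ once $C\ge 1/c$.

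The repair is to decouple the radii, which is precisely the paper's normalization $Ax=By=t$ (proofs of Theorems \ref{thrm: 1 = -1} and \ref{thrm: multipliers to bilip}): if $g(\xi)=g(\z)$, set $z=(1-t/A_g(\xi))\xi$ and $w=(1-t/A_g(\z))\z$, so that $1-\|g(z)\|^2$, $1-\|g(w)\|^2$ and $1-\langle g(z),g(w)\rangle$ are all $2t+o(t)$ and hence $\rho_g(z,w)\to 0$; the detector ``$\rho\to 0$ along some path'' \emph{is} stable under norm equivalence, so $\rho_f(z,w)\le C\rho_g(z,w)\to 0$, whereas $f(\xi)\ne f(\z)$ would force $\rho_f(z,w)\to 1$ along any such path (the off-diagonal kernel stays bounded while the diagonal kernels blow up) --- a contradiction; symmetry then gives the stated equivalence. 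With this modification your argument is correct, and it is genuinely different from the paper's proof, which never touches norms: the paper shows by a $2\times 2$ positivity and bounded-functional argument that every $h\in\H_f$ with radial limits satisfies $h(\xi)=h(\z)$ at a crossing (Theorem \ref{thrm: 1 = -1}), transfers this to multipliers continuous up to $\overline{\D}$ (Theorem \ref{thrm: crossings to multipliers}), and then feeds in the coordinate functions $f_j\in\M_f=\M_g$, using only set equality. Your (repaired) metric route is instead the mechanism the paper reserves for the finer semi-invariant $A_f(\xi)/A_f(\z)$ in Theorem \ref{thrm: multipliers to bilip}, where equivalence of norms is assumed explicitly.
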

        \begin{theorem} \label{thrm: isomorphic algebras to automorphism}
            Suppose $V$, $W$ are analytic discs attached to the unit sphere, and $f$, $g$ are the respective embedding maps. If $\M_f \cong \M_g$ algebraically, then there exists $\mu$, an automorphism of the unit disc $\D$, such that $\M_f = \M_{g \circ \mu}$ with equivalent norms.
        \end{theorem}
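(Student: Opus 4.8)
The plan is to show that any algebra isomorphism $\Phi \colon \M_f \to \M_g$ is implemented by composition with an automorphism of $\D$, through an analysis of the characters (multiplicative linear functionals) of the two algebras. I first record the elementary identity, valid for any automorphism $\mu$ of $\D$, that $\M_{g\circ\mu} = \{\psi\circ\mu : \psi\in\M_g\}$ with matching norms; this is immediate from the definition of the pullback algebra together with $(g\circ\mu)^{-1} = \mu^{-1}\circ g^{-1}$. Thus it will suffice to produce an automorphism $\mu$ with $\Phi(\varphi) = \varphi\circ\mu^{-1}$ for all $\varphi$; the norm equivalence then follows once $\Phi$ is known to be bounded with bounded inverse. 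The latter is automatic: $\M_f$ and $\M_g$ are semisimple commutative Banach algebras, since point evaluations are characters that separate points and hence the Gelfand transform is injective, and a homomorphism into a semisimple commutative Banach algebra is continuous.

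Both algebras contain the coordinate functions of their embedding maps: writing $f = (f_1,\dots,f_d)$, each $f_i$ is the pullback of the coordinate multiplier $z_i|_V$, and since the row $(M_{z_1},\dots,M_{z_d})$ is a contraction in $\M_d$, so is $(f_1,\dots,f_d)$ in $\M_f$. For a character $\chi$ of $\M_f$ this forces $\sum_i |\chi(f_i)|^2 \le 1$, so $\pi_f(\chi) := (\chi(f_1),\dots,\chi(f_d))$ lies in $\overline{\B_d}$. The key structural input, which I would cite from the theory of the Drury--Arveson algebra (characters of $\M_d$ lying over the open ball are point evaluations, combined with $\M_V \cong \M_d/J_V$), is: if $\pi_f(\chi) = \lambda \in \B_d$, then $\lambda \in V$ and $\chi$ is the pullback of $\mathrm{ev}_\lambda$, i.e.\ $\chi = \mathrm{ev}^f_{f^{-1}(\lambda)}$.

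Now fix $w \in \D$ and set $\chi_w := \mathrm{ev}^g_w\circ\Phi$, a character of $\M_f$ (automatically continuous), and $F := (\Phi(f_1),\dots,\Phi(f_d)) \colon \D \to \overline{\B_d}$, so $\pi_f(\chi_w) = F(w)$. I claim $F(w) \in \B_d$. Indeed, $\|F\|^2 = \sum_i |\Phi(f_i)|^2$ is subharmonic and bounded by $1$; were it to attain the value $1$ at an interior point it would be constantly $1$, whence $\sum_i |\Phi(f_i)'|^2 \equiv 0$, forcing $F$ constant and each $\Phi(f_i)$ scalar, which contradicts injectivity of $\Phi$ since the $f_i$ are nonconstant. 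Hence $F(w) \in \B_d$, so by the structural fact $F(w) \in V$ and $\chi_w = \mathrm{ev}^f_{h(w)}$ with $h(w) := f^{-1}(F(w)) \in \D$. The map $h = f^{-1}\circ F \colon \D \to \D$ is holomorphic, as $f^{-1}$ is holomorphic on the smooth one-dimensional variety $V$ (recall $f' \ne 0$), and $\Phi(\varphi)(w) = \chi_w(\varphi) = \varphi(h(w))$ for every $\varphi \in \M_f$; that is, $\Phi(\varphi) = \varphi\circ h$.

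Running the identical argument for $\Phi^{-1}$ yields a holomorphic $h' \colon \D \to \D$ with $\Phi^{-1}(\psi) = \psi\circ h'$. Composing the two gives $\varphi = \varphi\circ(h\circ h')$ and $\psi = \psi\circ(h'\circ h)$ for all $\varphi,\psi$; applying this to the point-separating functions $f_1,\dots,f_d$ yields $h\circ h' = h'\circ h = \id_{\D}$, so $h$ is an automorphism, which we name $\mu^{-1}$. Then $\Phi(\varphi) = \varphi\circ\mu^{-1}$, hence $\M_g = \{\varphi\circ\mu^{-1} : \varphi\in\M_f\}$, equivalently $\M_f = \{\psi\circ\mu : \psi\in\M_g\} = \M_{g\circ\mu}$; boundedness of $\Phi$ and $\Phi^{-1}$ upgrades this to equality with equivalent norms. \textbf{The main obstacle} is the structural input of the second paragraph: verifying that a character lying over an interior point of the ball is the corresponding point evaluation. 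The remaining ingredients are either elementary (the subharmonicity argument excluding boundary values, and the reduction via the separating coordinates $f_i$) or standard automatic continuity.
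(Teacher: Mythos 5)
Your proposal is correct, and it reaches the paper's conclusion by a genuinely different route. The paper first proves that an analytic disc is an irreducible variety (Lemma \ref{lem: irreducible varieties}) and then invokes \cite[Theorem 5.5]{Sal} (from \cite{DSR15}) as a black box to obtain multiplier-coordinate maps $F$, $G$ implementing $\Phi$ between $V$ and $W$, finally conjugating by the embeddings to get $\mu = g^{-1}\circ G\circ f$. You instead reprove the needed special case of that theorem directly on the disc: you pass to characters, use row-contractivity of the coordinates to land in $\overline{\B_d}$, and exclude the sphere by the subharmonicity/maximum-principle argument. That last step is precisely what substitutes for the paper's irreducibility lemma --- in \cite{DSR15} irreducibility is what prevents part of the variety from being carried into the boundary, while for a disc connectedness plus the maximum principle does the job, so your route bypasses the lemma entirely. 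The one nontrivial input you leave as a citation --- that a character of $\M_V\cong\M_d/J_V$ lying over a point of the open ball is a point evaluation at a point of $V$ --- is correct and standard (it follows from the fact that characters of $\M_d$ over $\B_d$ are evaluations, via solvability of Gleason's problem, together with $V$ being the joint zero set of $J_V$; see \cite{DSR15} or \cite[Section 5]{Sal}), and you flag it honestly; note that this fact is the technical core of \cite[Theorem 5.5]{Sal}, so your argument is not shorter than the paper's, but it is more self-contained at the level of variety theory and makes explicit that every isomorphism is a composition operator. One small repair: ``the $f_i$ are nonconstant'' is not quite right, since an individual coordinate of an embedding map may well be constant; what you need is that not all $f_i$ are constant, which holds because $f$ is injective, and the contradiction with injectivity of $\Phi$ then goes through verbatim. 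The remaining steps --- automatic continuity via semisimplicity, the identity $\M_{g\circ\mu}=\{\psi\circ\mu : \psi\in\M_g\}$, holomorphy of $f^{-1}\circ F$ because $f$ is an injective immersion, and the separation argument giving $h\circ h' = h'\circ h = \id_{\D}$ --- all check out and coincide with what the paper uses at the corresponding points.
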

        Combining these two theorems we get: 
        \begin{theorem} \label{thrm: algebras to self-crossings}
            Suppose $V$, $W$ are analytic discs attached to the unit sphere, and $f$, $g$ are the respective embedding maps. If $\M_f \cong \M_g$ algebraically, then, up to a unit disc automorphism, $f$ and $g$ have the same self-crossings on the boundary.
        \end{theorem}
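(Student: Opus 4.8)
The plan is simply to chain the two previous theorems together; the only genuine content is to check that precomposition with a disc automorphism keeps us inside the class of embedding maps, so that Theorem~\ref{thrm: algebras to crossings} remains applicable to the composed map.

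First I would invoke Theorem~\ref{thrm: isomorphic algebras to automorphism}. Assuming $\M_f \cong \M_g$ algebraically, it produces an automorphism $\mu$ of $\D$ for which $\M_f = \M_{g \circ \mu}$ with equivalent norms. Next I would verify that $g \circ \mu$ is itself an embedding map of $W$ in the sense of Definition~\ref{def: analytic disc}. Since $\mu$ is a Möbius automorphism of $\D$, it extends to a biholomorphism of a neighborhood of $\overline{\D}$ carrying $\TT$ bijectively onto $\TT$ and $\D$ bijectively onto $\D$. Composing with $g$ therefore preserves injectivity on $\D$, the $C^2$-extension to $\overline{\D}$, and the nonvanishing of the derivative (by the chain rule, as $\mu' \ne 0$). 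Because $\mu(\D) = \D$ and $\mu(\TT) = \TT$, the boundary condition $\|g(\mu(x))\| = 1 \iff |x| = 1$ holds as well, and $(g \circ \mu)(\D) = g(\D) = W$, so $g \circ \mu$ embeds the same variety $W$.

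Finally, since $\M_f = \M_{g \circ \mu}$, Theorem~\ref{thrm: algebras to crossings} applied to the pair $f$, $g \circ \mu$ yields
\[
    f(\xi) = f(\zeta) \iff (g \circ \mu)(\xi) = (g \circ \mu)(\zeta), \quad \xi, \zeta \in \TT.
\]
In words, $f$ and $g \circ \mu$ have the same self-crossings on $\TT$; since $g \circ \mu$ is $g$ precomposed with the disc automorphism $\mu$, this is exactly the assertion that, up to a unit disc automorphism, $f$ and $g$ have the same boundary self-crossings.

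The only step requiring a small argument is the middle one---confirming that the class of embedding maps is closed under precomposition with disc automorphisms---but this is routine given the explicit form of Möbius maps. I do not expect any real obstacle; the statement is genuinely a corollary of Theorems~\ref{thrm: algebras to crossings} and~\ref{thrm: isomorphic algebras to automorphism}, and the whole work is already carried out in their proofs.
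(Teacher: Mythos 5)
Your proposal is correct and takes essentially the same route as the paper, which obtains Theorem~\ref{thrm: algebras to self-crossings} precisely by combining Theorem~\ref{thrm: isomorphic algebras to automorphism} with Theorem~\ref{thrm: algebras to crossings}. Your middle step---verifying that $g \circ \mu$ is again an embedding map of $W$, so that Theorem~\ref{thrm: algebras to crossings} applies to the pair $f$, $g \circ \mu$---is a routine point the paper leaves implicit, and you handle it correctly.
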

        If $f$ and $g$ have the same self-crossings on the boundary up to a unit disc automorphism we say that they have the same \emph{self-crossing type}.
        
        Theorem \ref{thrm: algebras to self-crossings} means that for $f$ and $g$ with different self-crossings types the multiplier algebras are always non-isomorphic. Hence, we found a coarse characteristic which separates non-isomorphic cases, so that it remains to solve the isomorphism problem only for embeddings with the same self-crossings type. The author does not know if the same self-crossings type implies the isomorphism of the multiplier algebras. 
        
        Note that now we have a more complete solution in the injective case:
        Theorem \ref{thrm: algebras to self-crossings} together with Theorem \ref{thrmm: injective embedding} give us
        \begin{theorem}
            Suppose $V$, $W$ are analytic discs attached to the unit sphere, and $f$, $g$ are the respective embedding maps and $f$ is injective up to $\overline{\D}$. Then $\M_f \cong \M_g$ algebraically if and only if $g$ is injective up to $\overline{\D}$.  
        \end{theorem}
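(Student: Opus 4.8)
The plan is to deduce this statement directly from Theorem~\ref{thrm: algebras to self-crossings} and Theorem~\ref{thrmm: injective embedding}, handling the two implications separately; the heavy lifting has already been done in those results, so what remains is a short synthesis.

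For the easy direction, suppose $g$ is injective up to $\overline{\D}$. Since $f$ is injective up to $\overline{\D}$ by hypothesis, Theorem~\ref{thrmm: injective embedding} applies to both maps and gives $\M_f = H^{\infty}(\D) = \M_g$ with equivalent norms. In particular $\M_f$ and $\M_g$ coincide as algebras of functions on $\D$, so the identity map is an algebraic isomorphism between them and $\M_f \cong \M_g$ follows.

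For the converse, suppose $\M_f \cong \M_g$ algebraically. First I would invoke Theorem~\ref{thrm: algebras to self-crossings}: up to a unit disc automorphism $\mu$, the maps $f$ and $g$ have the same self-crossings on the boundary, that is, $f(\xi) = f(\zeta) \iff (g \circ \mu)(\xi) = (g \circ \mu)(\zeta)$ for $\xi, \zeta \in \TT$. Because $f$ is injective on $\overline{\D}$, it is in particular injective on $\TT$, so its left-hand condition holds only when $\xi = \zeta$; hence the right-hand condition forces $g \circ \mu$ to be injective on $\TT$. As $\mu$ extends to a homeomorphism of $\overline{\D}$ carrying $\TT$ bijectively onto $\TT$, this shows $g$ itself is injective on $\TT$.

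It then remains to upgrade injectivity of $g$ on $\TT$ to injectivity on all of $\overline{\D}$, which I expect to be the only point requiring any care. Injectivity of $g$ on the open disc $\D$ is already built into Definition~\ref{def: analytic disc}, and the defining norm condition $||g(x)|| = 1 \iff |x| = 1$ separates boundary values from interior values, since $||g(z)|| < 1$ for $z \in \D$ while $||g(\xi)|| = 1$ for $\xi \in \TT$; thus no interior point can share an image with a boundary point. Combining injectivity on $\D$, injectivity on $\TT$, and this boundary-versus-interior dichotomy yields injectivity of $g$ on $\overline{\D}$, completing the argument.
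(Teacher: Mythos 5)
Your proof is correct and takes essentially the same approach as the paper, which states this theorem precisely as the immediate combination of Theorem~\ref{thrm: algebras to self-crossings} and Theorem~\ref{thrmm: injective embedding} without spelling out details. The points you fill in---transferring injectivity on $\TT$ through the automorphism $\mu$, and upgrading injectivity from $\TT$ to $\overline{\D}$ using the injectivity on $\D$ from Definition~\ref{def: analytic disc} together with the dichotomy $\|g(z)\| < 1$ for $z \in \D$ versus $\|g(\xi)\| = 1$ for $\xi \in \TT$---are exactly the routine steps the paper leaves implicit.
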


        Next, we consider analytic discs with exactly one self-crossing on the boundary. Since we can always apply an automorphism we assume that $f(-1) = f(1)$ is the self-crossing and otherwise $f$ is injective in $\overline{\D}$.
        It turns out that for such maps the isomorphism condition is rigid.
        \begin{theorem} \label{thrm: multiplier isomorphism two points to two auto}
            Suppose $V$, $W$ are analytic discs attached to the unit sphere, and $f$, $g$ are the respective embedding maps with the only self-crossing at $\pm 1$. Then $\M_f \cong \M_g$ algebraically if and only if $\M_f = \M_{g \circ \mu}$, where
            \begin{equation*}
                \mu(z) = \frac{z - \a}{1 - \a z}, \text{ or} \qquad \mu(z) = \frac{\b - z}{1 - \b z}
            \end{equation*}
            with constants $\a, \b \in (-1, 1)$ that can be explicitly calculated in terms of $f(\pm 1), f'(\pm 1)$ and $g(\pm 1), g'(\pm 1)$.
        \end{theorem}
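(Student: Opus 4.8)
The reverse implication is the routine direction. If $\M_f = \M_{g\circ\mu}$ with equivalent norms for a disc automorphism $\mu$, then $\f \mapsto \f\circ\mu$ is an algebra isomorphism of $\M_g$ onto $\M_{g\circ\mu} = \M_f$ (its inverse is composition with $\mu^{-1}$), so $\M_f \cong \M_g$ algebraically. The whole content is therefore in the forward direction and in identifying the two candidate maps explicitly.

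For the forward direction I would first invoke Theorem \ref{thrm: isomorphic algebras to automorphism} to produce \emph{some} disc automorphism $\mu$ with $\M_f = \M_{g\circ\mu}$ and equivalent norms. Applying Theorem \ref{thrm: algebras to crossings} to the pair $f$ and $g\circ\mu$, whose multiplier algebras now coincide, shows that $f$ and $g\circ\mu$ have the same boundary self-crossings. Since the unique crossing of $f$ is $\{1,-1\}$ and the unique crossing of $g$ is $\{1,-1\}$, the crossing set of $g\circ\mu$ is $\mu^{-1}(\{1,-1\})$, so equality of crossings forces $\mu$ to map $\{1,-1\}$ onto itself. A short computation then classifies such automorphisms: one finds that $\mu$ either fixes both points, forcing $\mu$ to be real of the form $\mu(z) = \frac{z-\a}{1-\a z}$ with $\a\in(-1,1)$, or swaps them, forcing $\mu(z) = \frac{\b-z}{1-\b z}$ with $\b\in(-1,1)$. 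This already restricts $\mu$ to the two announced one-parameter families.

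It remains to pin down the constant inside each family, and this is where I expect the main obstacle to lie. The equality $\M_f = \M_{g\circ\mu}$ is sensitive to the local geometry at the self-crossing, encoded by the boundary invariant $A_f(\xi) = \langle f(\xi), f'(\xi)\xi\rangle$. Differentiating $\|f(e^{it})\|^2\equiv 1$ shows that $A_f(\xi)$ is real for $\xi\in\TT$, and it is nonzero by the transversality $\langle f(\xi), f'(\xi)\rangle\ne 0$. Expanding the kernel $\k^f(z,w) = (1-\langle f(z),f(w)\rangle)^{-1}$ at the glued boundary point shows that its singular behaviour in the diagonal directions at $1$ and at $-1$, as well as the cross term coupling $z\to 1$ with $w\to -1$, are all governed by $A_f(1)$ and $A_f(-1)$; in particular the finer multiplier constraint that distinguishes $\M_f$ from $H^{\infty}(V)$ in the crossing case is a derivative-matching condition at $\pm1$ weighted by these two numbers. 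The hard step is to convert this into the statement that $\M_f = \M_{g\circ\mu}$ forces the gluing functionals of $f$ and of $g\circ\mu$ to be proportional, i.e. a matching relation between the ratios $A_f(1)/A_f(-1)$ and $A_{g\circ\mu}(1)/A_{g\circ\mu}(-1)$. Note that only the ratio can be expected to match, since precomposing $f$ with an automorphism fixing $\{1,-1\}$ rescales the two values $A_f(\pm1)$ separately.

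Granting this matching relation, the computation closes by the chain rule. Writing $h = g\circ\mu$ one gets $A_h(\xi) = \overline{\xi\mu'(\xi)}\,\mu(\xi)\,A_g(\mu(\xi))$. For $\mu(z) = \frac{z-\a}{1-\a z}$ one has $\mu'(1) = \frac{1+\a}{1-\a}$ and $\mu'(-1) = \frac{1-\a}{1+\a}$, hence $A_h(1) = \frac{1+\a}{1-\a}A_g(1)$ and $A_h(-1) = \frac{1-\a}{1+\a}A_g(-1)$; equating $A_f(1)/A_f(-1) = A_h(1)/A_h(-1)$ yields $\left(\frac{1+\a}{1-\a}\right)^2 = \frac{A_f(1)A_g(-1)}{A_f(-1)A_g(1)}$, which has a unique solution $\a\in(-1,1)$ because $\a \mapsto \frac{1+\a}{1-\a}$ maps $(-1,1)$ monotonically onto $(0,\infty)$. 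The swap case is identical after exchanging the roles of $1$ and $-1$, giving $\left(\frac{1+\b}{1-\b}\right)^2 = \frac{A_f(1)A_g(1)}{A_f(-1)A_g(-1)}$ for $\mu(z) = \frac{\b-z}{1-\b z}$. Since each $A_f(\pm1)$ and $A_g(\pm1)$ is expressible through $f(\pm1), f'(\pm1)$ and $g(\pm1), g'(\pm1)$, both constants are explicit, as claimed.
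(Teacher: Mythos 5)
Your outline reproduces the paper's proof of this theorem almost step for step: the trivial reverse implication; the reduction via Theorem \ref{thrm: isomorphic algebras to automorphism} to $\M_f = \M_{g \circ \mu}$ with equivalent norms; the observation via Theorem \ref{thrm: algebras to crossings} that $f$ and $g \circ \mu$ have the same boundary crossings, forcing $\mu(\{1,-1\}) = \{1,-1\}$; the classification of such automorphisms into the two real one-parameter families (this is the paper's Lemma \ref{lem: -1 to -1 and 1 to 1}, proved by the same elementary computation you indicate); and the chain rule for $A_{g\circ\mu}$ (the paper's Lemma \ref{lem: A_f under mu}). Your final identities $\left(\frac{1+\a}{1-\a}\right)^2 = \frac{A_f(1)A_g(-1)}{A_f(-1)A_g(1)}$ and $\left(\frac{1+\b}{1-\b}\right)^2 = \frac{A_f(1)A_g(1)}{A_f(-1)A_g(-1)}$ agree exactly with the paper's \eqref{eq: alpha} and \eqref{eq: beta}, and your monotonicity remark correctly yields uniqueness of $\a, \b \in (-1,1)$.

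The genuine gap is precisely the step you declare you are ``granting'': that $\M_f = \M_{g \circ \mu}$ with equivalent norms forces $\frac{A_f(1)}{A_f(-1)} = \frac{A_{g\circ\mu}(1)}{A_{g\circ\mu}(-1)}$. This is not a formality one can wave through; in the paper it is a standalone result (Theorem \ref{thrm: multipliers to bilip}), and its mechanism is not the ``derivative-matching condition'' your sketch gestures at, but metric rigidity. Since $\H_f$ is complete Pick, the kernel-induced metric $d_f(z,w)^2 = 1 - \frac{|\k^f(z,w)|^2}{\k^f(z,z)\,\k^f(w,w)}$ coincides with the interpolation quantity $\sup\{ |\f(z)| : \f \in \M_f,\ \|\f\|_{\M_f} \le 1,\ \f(w) = 0 \}$, so equivalence of the multiplier norms gives $d_f \asymp d_g$. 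One then moves along $z = 1 - t/A_f(1)$, $w = -1 + t/A_f(-1)$ (the same normalization as in the proof of Theorem \ref{thrm: 1 = -1}) and computes from the boundary expansions that $d_f^2(z,w) = o(1)$ while $d_g^2(z,w) = 1 - \frac{4ab}{(a+b)^2} + o(1)$, where $a = A_g(1)/A_f(1)$ and $b = A_g(-1)/A_f(-1)$; the equivalence forces $\frac{4ab}{(a+b)^2} = 1$, i.e.\ $a = b$, which is the ratio matching. Without an argument of this kind your proof is incomplete at exactly the point you identified as the main obstacle; with it, everything else you wrote is correct and structurally identical to the paper's proof.
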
   

        Let us consider an example. Set
        \begin{equation*}
            b_r(z) = \frac{z - r}{1 - rz}, \quad -1 < r < 1,
        \end{equation*}
        an automorphism  of the unit disc such that $b_r(\pm 1) = \pm 1$. Define
        \begin{equation*}
            f_r(z) = \frac{1}{\sqrt{2}} \left( z^2, b_r(z)^2 \right), \quad r > 0.
        \end{equation*}
        Then, according to \cite[Theorem 5.2]{Dav}, $V = f(\D)$ is a variety with the only self-crossing at $\pm 1$. Using Theorem \ref{thrm: multiplier isomorphism two points to two auto} we show:
        \begin{theorem} \label{thrm: f_r and f_s}
            $\M_{f_r} \ne \M_{f_s}$ for $r \ne s$.
        \end{theorem}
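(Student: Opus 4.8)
The plan is to reduce the set-equality of the two algebras to a single scalar invariant of the boundary data and to read that invariant off from Theorem~\ref{thrm: multiplier isomorphism two points to two auto}. First I would check that each $f_r$ is a genuine embedding map with its unique self-crossing at $\pm1$, so that the theorem applies to every pair $f_r,f_s$: one verifies $f_r'(z)\neq 0$ on $\D$ (the first coordinate $2z$ vanishes only at $0$, where the second coordinate $2b_r(0)b_r'(0)=-2r(1-r^2)\neq0$), that $\|f_r(z)\|=1\iff|z|=1$ from $|b_r(z)|=1\iff|z|=1$, and one invokes \cite[Theorem~5.2]{Dav} for the variety and crossing structure. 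Since $b_r(\pm1)=\pm1$, indeed $f_r(1)=f_r(-1)=\tfrac1{\sqrt2}(1,1)$, so the crossing sits at $\pm1$ for every $r$.

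Next I would compute the boundary quantities feeding the theorem. From $b_r'(1)=\frac{1+r}{1-r}$ and $b_r'(-1)=\frac{1-r}{1+r}$ a direct calculation gives
\begin{equation*}
  A_{f_r}(1)=\frac{2}{1-r},\qquad A_{f_r}(-1)=\frac{2}{1+r},
\end{equation*}
both real and positive, so the quantity that survives reparametrization is the ratio $\tau_r:=A_{f_r}(1)/A_{f_r}(-1)=\frac{1+r}{1-r}$, which is strictly increasing on $(0,1)$ and hence injective in $r$.

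The decisive step is the logical link. By Theorem~\ref{thrm: multiplier isomorphism two points to two auto}, for the pair $(f_r,f_s)$ any automorphism realizing $\M_{f_r}=\M_{f_s\circ\mu}$ is one of two explicit maps: the orientation-preserving $\mu(z)=\frac{z-\a}{1-\a z}$ fixing $\pm1$, or the orientation-reversing $\mu(z)=\frac{\b-z}{1-\b z}$ swapping $\pm1$. The equality $\M_{f_r}=\M_{f_s}$ is precisely the case $\mu=\id$; as $\id$ is orientation-preserving and fixes $\pm1$, it can only coincide with the first candidate, which forces $\a=0$. Tracking how $A$ changes under $g\mapsto g\circ\mu$ at the fixed points, where $\mu'(1)=\frac{1+\a}{1-\a}$ and $\mu'(-1)=\frac{1-\a}{1+\a}$, yields the transformation law
\begin{equation*}
  \tau_{g\circ\mu}=\Bigl(\frac{1+\a}{1-\a}\Bigr)^{2}\tau_g ,
\end{equation*}
so the candidate parameter obeys $\bigl(\tfrac{1+\a}{1-\a}\bigr)^{2}=\tau_r/\tau_s$. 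Hence $\a=0\iff\tau_r=\tau_s\iff r=s$, and for $r\neq s$ neither candidate is the identity, whence $\M_{f_r}\neq\M_{f_s}$.

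I expect the main obstacle to be conceptual rather than computational: pinning down that the \emph{set} equality $\M_{f_r}=\M_{f_s}$ is governed precisely by the ratio $\tau$ — the individual values $A_{f_r}(\pm1)$ only control the constants in the norm equivalence, not membership in the algebra — which is exactly the input I am extracting from the proof of Theorem~\ref{thrm: multiplier isomorphism two points to two auto}. A secondary, bookkeeping obstacle is obtaining the transformation law with the correct exponent and sign, which requires care with the conjugate-linear slot of $A_f(\xi)=\langle f(\xi),f'(\xi)\xi\rangle$ at $\xi=-1$.
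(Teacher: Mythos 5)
Correct, and essentially the paper's argument: the paper simply applies Theorem \ref{thrm: multipliers to bilip} to the equality $\M_{f_r}=\M_{f_s}$ and computes $\frac{A_{f_r}(1)}{A_{f_r}(-1)}=\frac{1+r}{1-r}$, strictly increasing in $r$ --- precisely your invariant $\tau_r$, with your values $A_{f_r}(1)=\frac{2}{1-r}$, $A_{f_r}(-1)=\frac{2}{1+r}$ matching its boundary computation. Your detour through Theorem \ref{thrm: multiplier isomorphism two points to two auto}, extracting from its proof that the identity must be the first candidate and hence $\a=0$ in \eqref{eq: alpha}, is only a repackaging: that formula is itself obtained from Theorem \ref{thrm: multipliers to bilip} via the same transformation law (Lemma \ref{lem: A_f under mu}), so the two arguments rest on the identical invariant and computation.
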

        We see that, in contrast to the injective case, even for embeddings with the same self-crossings the multiplier algebras might not coincide. Though, it does not mean that the algebras are not isomorphic, as we still have the freedom to do unit disc automorphisms. Let us take care of this by considering a more general version of this embedding. Set
        \begin{equation*}
            f_{r, s}(z) = \frac{1}{\sqrt{2}} \left( b_r(z)^2, b_s(z)^2 \right), \quad r \ne s,
        \end{equation*}
        so that $f_r = f_{0, r}$.
        Note that
        \begin{equation*}
            (b_r \circ b_s)(z) = \frac{z - \frac{r + s}{1 + rs}}{1 - \frac{r + s}{1 + rs} z} = b_{\frac{r + s}{1 + rs}}(z).
        \end{equation*}
        Hence,
        \begin{equation*}
            (f_{r, s} \circ b_{-r})(z) = \frac{1}{\sqrt{2}} \left( z^2, b_{\frac{s - r}{1 - sr}}(z)^2 \right) = f_{0, \frac{s - r}{1 - sr}}(z),
        \end{equation*}
        meaning that we get the same varieties (do $z \mapsto -z$ in case $s < r$). Now, for $f_{0, r}, \, r \in (0,1)$ consider 
        \begin{equation*}
            t = \frac{-1 + \sqrt{1 - r^2}}{r},
        \end{equation*}
        we have $t \in (-1, 0)$ and $f_{0, r} \circ b_t = f_{t, -t}$. In fact, $t$ is unique, so that any $f_{r, s}$ corresponds to a unique $f_{t, -t}, \, t \in (-1, 0)$ via a unit disc automorphism. 
        For such embeddings Theorem \ref{thrm: multiplier isomorphism two points to two auto} takes a simpler form
        \begin{theorem} \label{thrm: f_r-r and f_s-s}
            $\M_{f_{r, -r}} \cong \M_{f_{s, -s}}$ algebraically if and only if  $\M_{f_{r, -r}} = \M_{f_{s, -s}}$.
        \end{theorem}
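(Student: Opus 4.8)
The plan is to reduce the statement to Theorem~\ref{thrm: multiplier isomorphism two points to two auto} and then exploit the reflection symmetry of the family $f_{t,-t}$ to show that the two candidate automorphisms it produces are trivial. The backward implication is immediate: equal algebras are in particular algebraically isomorphic, so all the work is in the forward implication.

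First I would record the symmetry. Let $\s(z)=-z$ and let $P\colon\C^2\to\C^2$ be the coordinate swap $P(w_1,w_2)=(w_2,w_1)$. From $b_t(-z)=-b_{-t}(z)$ we get $b_t(-z)^2=b_{-t}(z)^2$, hence
\begin{equation*}
    f_{t,-t}(-z)=\tfrac{1}{\sqrt2}\bigl(b_{-t}(z)^2,\,b_t(z)^2\bigr)=P\,f_{t,-t}(z).
\end{equation*}
Since $P$ is unitary it preserves inner products, so $\langle f_{t,-t}(-z),f_{t,-t}(-w)\rangle=\langle f_{t,-t}(z),f_{t,-t}(w)\rangle$, and by the kernel formula~\eqref{eq: k^f formula} this gives $\k^{f_{t,-t}\circ\s}=\k^{f_{t,-t}}$. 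Equal kernels mean $\H_{f_{t,-t}\circ\s}=\H_{f_{t,-t}}$ as reproducing kernel Hilbert spaces, and therefore
\begin{equation*}
    \M_{f_{t,-t}\circ\s}=\M_{f_{t,-t}}.
\end{equation*}
In other words, precomposing with $z\mapsto-z$ changes neither the space nor the algebra.

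Next, assume $\M_{f_{r,-r}}\cong\M_{f_{s,-s}}$ algebraically. As explained before the statement, each $f_{t,-t}$ is an embedding map of an analytic disc attached to the sphere whose only self-crossing is at $\pm1$, so Theorem~\ref{thrm: multiplier isomorphism two points to two auto} applies with $f=f_{r,-r}$, $g=f_{s,-s}$ and yields $\M_{f_{r,-r}}=\M_{f_{s,-s}\circ\mu}$, where $\mu$ is one of the two explicit candidates: either $\mu(z)=\frac{z-\a}{1-\a z}$, which fixes $\pm1$, or $\mu(z)=\frac{\b-z}{1-\b z}$, which swaps them. I would then pin down $\a$ and $\b$ from the boundary data. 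A direct computation gives $f_{t,-t}(\pm1)=\tfrac{1}{\sqrt2}(1,1)$ and
\begin{equation*}
    A_{f_{t,-t}}(1)=A_{f_{t,-t}}(-1)=\frac{2(1+t^2)}{1-t^2},
\end{equation*}
the equality of the two crossing-point invariants being precisely the boundary trace of $f_{t,-t}(-z)=P\,f_{t,-t}(z)$. Under precomposition the invariant transforms by $A_{g\circ\mu}(\xi)=\tfrac{\mu'(\xi)\xi}{\mu(\xi)}A_{g}(\mu\xi)$, which for $\mu(z)=\frac{z-\a}{1-\a z}$ multiplies $A_g(1)$ by $\frac{1+\a}{1-\a}$ and $A_g(-1)$ by $\frac{1-\a}{1+\a}$. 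Matching the invariants of $f_{r,-r}$ and $f_{s,-s}\circ\mu$ at the two crossing points therefore reads
\begin{equation*}
    A_{f_{r,-r}}(1)=\tfrac{1+\a}{1-\a}\,A_{f_{s,-s}}(1),\qquad A_{f_{r,-r}}(-1)=\tfrac{1-\a}{1+\a}\,A_{f_{s,-s}}(-1).
\end{equation*}
Because within each map the two invariants coincide, dividing these equations forces $\frac{(1+\a)^2}{(1-\a)^2}=1$, i.e.\ $\a=0$; the identical computation for the swapping candidate forces $\b=0$.

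Hence the only admissible candidates are $\mu=\id$ and $\mu=\s$. If $\mu=\id$ then $\M_{f_{r,-r}}=\M_{f_{s,-s}}$ at once, while if $\mu=\s$ the first step gives $\M_{f_{s,-s}\circ\s}=\M_{f_{s,-s}}$, so again $\M_{f_{r,-r}}=\M_{f_{s,-s}}$. This establishes the forward implication and completes the proof. I expect the main obstacle to be the third step: confirming from the proof of Theorem~\ref{thrm: multiplier isomorphism two points to two auto} that the candidate constants $\a,\b$ are governed exactly by the invariants $A_{f}(\pm1)$ (together with the matching of boundary directions), so that the reflection symmetry $A_{f_{t,-t}}(1)=A_{f_{t,-t}}(-1)$ really forces them to vanish; the symmetry bookkeeping and the Blaschke-factor identities around it are routine.
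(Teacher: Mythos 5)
Your proposal is correct and takes essentially the same route as the paper: apply Theorem~\ref{thrm: multiplier isomorphism two points to two auto}, use the symmetry $A_{f_{t,-t}}(1)=A_{f_{t,-t}}(-1)$ to force $\a=\b=0$ (the paper reads this off directly from \eqref{eq: alpha} and \eqref{eq: beta}), and dispose of the remaining candidate $z\mapsto -z$ via the kernel identity $\k^{f_{t,-t}\circ\s}=\k^{f_{t,-t}}$, which the paper derives by the same coordinate-swap observation. One small caution: your intermediate equations matching $A_{f_{r,-r}}(\pm1)$ with $A_{f_{s,-s}\circ\mu}(\pm1)$ pointwise are stronger than what Theorem~\ref{thrm: multipliers to bilip} actually yields (only the equality of the ratios $A_f(1)/A_f(-1)$ is a semi-invariant), but since you immediately divide the two equations and use only their quotient, the argument is unaffected.
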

        The author does not know whether $\M_{f_{r, -r}} = \M_{f_{s, -s}}$ holds for any $r$ and $s$, but it seems plausible. If there are $r$ and $s$ for which $\M_{f_{r, -r}} \ne \M_{f_{s, -s}}$, then Theorem \ref{thrm: f_r-r and f_s-s} gives us an example where two embeddings with the same self-crossing type give rise to non-isomorphic multiplier algebras.

    \section{Complete Pick spaces}
        A different way to look at the multiplier algebras $\M_V$ comes from function theory, in particular from the theory of complete Pick spaces. 
        
        Let $\H$ be a reproducing kernel Hilbert space on a set $X$ with kernel $\k$ and let $\M$ be its multiplier algebra $\Mult(\H)$. Consider the following problem. Given $\l_1, \ldots, \l_n \in X$ and $a_1, \ldots, a_n \in \C$, does the interpolation problem
        \begin{equation*}
            \f(\l_j) = a_j, \quad j = 1, \ldots, n
        \end{equation*}
        have a multiplier solution $\f \in \M, \ ||\f||_{\M} \le 1$?
        
        This problem is called \emph{the Pick interpolation problem}\/. Denote by $M_{\f}$ the operator on $\H$ given by multiplying by $\f$. By definition the norm on $\M$ is inherited from the operator norm, hence, $||\f||_{\M} \le 1$ is equivalent to $I_{\H} - M_{\f} M^*_{\f} \ge 0$. Using this fact together with $M^*_{\f} \k_x = \overline{\f(x)} \k_x$ we get a necessary condition for the Pick interpolation problem to have a solution, i.e., 
        \begin{equation} \label{eq: Pick positive matrix}
            \left[ \left( 1 - a_j \overline{a_k} \right) \k(\l_j, \l_k) \right]_{j, k = 1}^n \ge 0.
        \end{equation}
        Pick, \cite{pick1915}, showed that for the Hardy space $H^2(\D)$, which has the Szego kernel 
        \begin{equation*}
            \k(z, w) = \frac{1}{1 - z \bar w},
        \end{equation*}
        this positivity condition is also sufficient. Spaces and their kernels for which the positivity of \eqref{eq: Pick positive matrix} is sufficient for the Pick interpolation problem to have a solution are called \emph{Pick spaces} and \emph{Pick kernels} respectively. It is not difficult to see that not all spaces are Pick, e.g., the Bergman space on the unit disc, namely, the space with the kernel
        \begin{equation*}
            \k(z, w) = \frac{1}{(1 - z \bar w)^2}
        \end{equation*}
        is not Pick.
        
        Let us now turn our attention to the matrix version of the Pick interpolation problem. We consider $\M^{m \times m}$, the algebra of $m \times m$ multiplier matrices, so that $\M^{m \times m} = \Mult(\H^m)$. Then, the $m \times m$ Pick interpolation problem is the following: Given $\l_1, \ldots, \l_n \in X$ and $A_1, \ldots, A_n \in M_m(\C)$, does the interpolation problem
        \begin{equation*}
            F(\l_j) = A_j, \quad j = 1, \ldots, n
        \end{equation*}
        have an $m \times m$ multiplier solution $F \in \M^{m \times m}, \ ||F||_{\M^{m \times m}} \le 1$?
        
        Similarly to the $m = 1$ case, it is possible to show the necessity of the positivity condition
        \begin{equation*}
            \left[ \left( 1 - A_j A_k^* \right) \k(\l_j, \l_k) \right]_{j, k = 1}^n \ge 0.
        \end{equation*}
        If this condition is sufficient for all $m \ge 1$ we call the space and the respective kernel \emph{complete Pick}.  
        We refer to the monograph of Agler and McCarthy, \cite{AglBook}, for an in-depth introduction to the study of (complete) Pick kernels, and to \cite[Chapter 5, Section 3]{quiggin1994generalisations} for an example of a space which is Pick but not complete Pick. 

        We say that an RKHS $\H$ on $X$ with kernel $\k$ is \emph{irreducible} if $\k(x, y)$ does not vanish for $x, y \in X$.
        The connection between complete Pick spaces and our isomorphism problem is given by the following theorem due to Agler and McCarthy, \cite[Theorem 4.2]{Agl}, \cite[Theorem 8.2]{AglBook}. 
        \begin{thrmm} \label{thrmm: Agler-McCarthy}
            Let $\H$ be a separable irreducible reproducing kernel Hilbert space on $X$. Then $\H$ is complete Pick if and only if for some $d \in \N \cup \{ \infty \}$ there is a
            map $b: X \to \B_d$ and a nowhere vanishing function $\de: X \to \C$ such that
            \begin{equation*}
                \k(z, w) = \frac{\de(z) \overline{\de(w)}}{1 - \langle b(z), b(w) \rangle_{\C^d}}.
            \end{equation*}
            Meaning that up to rescaling and composition $\H$ is essentially $\H_V$ for some variety $V \subset \B_d$. Moreover, $\M$ is isometrically isomorphic to $\M_V$ via the same composition.
        \end{thrmm}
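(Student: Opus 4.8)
The plan is to prove both implications, treating the structural formula for $\k$ as the real content; the assertions about $\H_V$ and the isomorphism $\M \cong \M_V$ then follow formally. The first step is \emph{normalization}. Fix a basepoint $w_0 \in X$; by irreducibility $\k(z, w_0) \ne 0$ for every $z$, so I set $\de(z) = \k(z, w_0)/\sqrt{\k(w_0, w_0)}$ and pass to the normalized kernel $\tilde{\k}(z,w) = \k(z,w)/(\de(z)\overline{\de(w)})$, which satisfies $\tilde{\k}(\cdot, w_0) \equiv 1$. Multiplying the kernel by $\de(z)\overline{\de(w)}$ is implemented by the unitary $h \mapsto \de \cdot h$ between the two RKHS and preserves the complete Pick property, so it suffices to produce $b$ with $\tilde{\k}(z,w) = 1/(1 - \langle b(z), b(w)\rangle)$, i.e. to prove the theorem for normalized kernels with $\de \equiv 1$.

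For the \emph{easy direction}, suppose $\tilde{\k}(z,w) = 1/(1 - \langle b(z), b(w)\rangle)$ with $b: X \to \B_d$, so that $\tilde{\k}$ is the pullback under $b$ of the Drury--Arveson kernel $\k_d(z,w) = 1/(1 - \langle z, w\rangle)$. I would first show $\k_d$ is complete Pick by the standard lurking-isometry realization, which converts positivity of a matricial Pick datum for $H^2_d$ into a contractive multiplier solution. Pulling back is then immediate: an $m \times m$ Pick matrix for $\tilde{\k}$ at points $\l_1, \dots, \l_n$ is \emph{literally} the corresponding Pick matrix for $H^2_d$ at $b(\l_1), \dots, b(\l_n)$, so positivity transfers and a solution $F$ for the latter yields $F \circ b$ for the former.

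The \emph{hard direction} is the main obstacle and rests on the McCullough--Quiggin criterion: for a normalized kernel, $\H$ is complete Pick if and only if $1 - 1/\tilde{\k}$ is a positive semidefinite kernel. The delicate implication is that complete Pick forces $1 - 1/\tilde{\k} \ge 0$. I would argue by duality on Pick matrices: fixing finite $\{x_1, \dots, x_n\} \subset X$, solvability of \emph{all} $m \times m$ Pick problems there is a complete positivity statement about the cone generated by the matrices $[(1 - A_iA_j^*)\tilde{\k}(x_i,x_j)]$, and dualizing this cone via a Hahn--Banach separation exploiting their Schur-product structure pins down exactly the positivity of $[\,1 - 1/\tilde{\k}(x_i,x_j)\,]$ on every finite set, hence of the full kernel $1 - 1/\tilde{\k}$. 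Granting this, the Kolmogorov/Moore--Aronszajn factorization writes $1 - 1/\tilde{\k}(z,w) = \langle b(z), b(w)\rangle$ for a map $b$ into a separable Hilbert space, identified with $\C^d$, $d \in \N \cup \{\infty\}$, with separability of $\H$ controlling $d$. Irreducibility gives $\tilde{\k}(z,z) > 0$, whence $\|b(z)\|^2 = 1 - 1/\tilde{\k}(z,z) < 1$, so $b$ maps into $\B_d$; undoing the normalization restores the stated formula with $\de$.

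Finally, for the \emph{moreover} statement, set $V = b(X) \subset \B_d$ (or its closure), so $\H_V$ carries the restricted Drury--Arveson kernel. The map $U \colon \H_V \to \H$, $Uh = \de \cdot (h \circ b)$, sends the reproducing kernel at $b(\l)$ to the one at $\l$ and is therefore unitary. Conjugation by $U$ carries $M_\psi$ on $\H_V$ to $M_{\psi \circ b}$ on $\H$ — the $\de$-rescaling cancels on the multiplier side — so $U$ intertwines the two multiplication representations and $\psi \mapsto \psi \circ b$ is the claimed completely isometric isomorphism $\M_V \to \M$. The single genuinely hard point is the forward implication of the McCullough--Quiggin criterion; everything else is bookkeeping around normalization, factorization of positive kernels, and the universality of $H^2_d$.
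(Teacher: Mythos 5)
This theorem is not proved in the thesis at all: it is quoted as background (Theorem~\ref{thrmm: Agler-McCarthy}) from Agler--McCarthy \cite[Theorem 4.2]{Agl}, \cite[Theorem 8.2]{AglBook}, so the relevant comparison is with the proof in those references. Your outline is correct and is essentially that standard proof --- normalization of $\k$ at a basepoint, the McCullough--Quiggin criterion (positivity of $1 - 1/\tilde{\k}$) as the genuinely hard step, Kolmogorov factorization of that kernel to obtain $b: X \to \B_d$ with separability controlling $d$, and the unitary $h \mapsto \de \cdot (h \circ b)$ intertwining the multiplication operators for the ``moreover'' clause.
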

        
        We see that the Drury-Arveson space is, in a sense, the universal complete Pick space. Hence, the isomorphism problem is closely related to the study of irreducible complete Pick spaces and their multiplier algebras.
        
    \section{Embedding dimension}
        A natural question one might ask regarding Theorem \ref{thrmm: Agler-McCarthy} is what is the smallest $d$ for a given space $\H$. Such a $d$ is called \emph{the embedding dimension} of $\H$. The main problem is to determine this embedding dimension for a fixed complete Pick space $\H$ or at least determine if it is finite or infinite, since for finite $d$ the Drury-Arveson space and its multiplier algebra are a lot more tractable.
        
        We can ask a similar question about $\M$. Let us say that $d$ is \emph{the multiplier embedding dimension} if it is the smallest $d$ such that there is a variety $V \subset \B_d$ with $\M$ being isometrically isomorphic to $\M_V$ via a composition map. It is clear that the multiplier embedding dimension can only be smaller than the embedding dimension. But, in fact, they are equal in light of \cite[Corollary 3.2]{hartz2017isomorphism}, which implies that for complete Pick spaces coinciding multiplier algebras mean that Hilbert spaces are rescalings of each other.

        Let us list some results regarding the embedding dimension problem.
        Rochberg, \cite{Rochberg2017301}, showed that the embedding dimension of the Dirichlet space is infinite. Hartz, \cite{Har}, considered an even weaker notion of embedding and proved that even non-isometric multiplier embedding dimension for the Dirichlet space is infinite. 

        We consider the embedding dimension problem for a class of relatively simple spaces in one variable, namely, rotation-invariant spaces on the unit disc $\D$.
        \begin{definition}
            We say that an RKHS $\H$ is \emph{rotation-invariant} if for any $f \in \H$ and $\xi \in \TT$, $f(\xi z) \in \H$ and $|| f(\xi z) ||_{\H} = || f(z) ||_{\H}$.    
        \end{definition}
        This problem was considered in more generality (unitarily invariant spaces on the unit ball) by Hartz \cite{hartz2017isomorphism}.
        
        We provide a way to find the embedding dimension for such spaces.
        \begin{theorem} \label{thrm: isometric embedding}
            Let $\H$ be a rotation-invariant irreducible complete Pick space of analytic functions on the unit disc $\D$ with kernel $\k$.
            Then the embedding dimension $d$ is finite if and only if 
            \begin{equation*}
                \frac{1}{\k(0, 0)} - \frac{1}{\k(z, w)}
            \end{equation*}
            is a polynomial in $z \bar w$. Moreover, $d$ is exactly the number of non-zero coefficients of this polynomial.
        \end{theorem}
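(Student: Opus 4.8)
The plan is to reduce everything to the positive kernel $L(z,w) = \frac{1}{\k(0,0)} - \frac{1}{\k(z,w)}$ and to identify the embedding dimension with the dimension of its reproducing kernel Hilbert space $\H_L$. First I would record the consequence of rotation invariance: the monomials are mutually orthogonal, so the kernel has the form $\k(z,w) = \sum_{n \geq 0} a_n (z\bar w)^n$ with $a_n \geq 0$, and irreducibility forces $a_0 = \k(0,0) > 0$. Because only the $n=0$ term survives at $w = 0$, the kernel is already normalized at the origin, i.e.\ $\k(z,0) = \k(0,0)$ for every $z$. Expanding $\frac{1}{\k(z,w)}$ as a power series in $u = z\bar w$ (legitimate near $0$ since $a_0 \neq 0$) and subtracting from $\frac{1}{a_0}$ then yields $L(z,w) = \sum_{n \geq 1} c_n (z\bar w)^n$ with vanishing constant term.

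Next I would show that $L$ is a positive kernel and that $\dim \H_L$ equals the number of nonzero $c_n$. Positivity is exactly the normalized complete Pick criterion: since $\k(z,0) = \k(0,0)$, the Agler--McCarthy normalization of $\k$ at $0$ is simply $\k(z,w)/\k(0,0)$, and the complete Pick property (which $\H$ enjoys by hypothesis) makes $1 - \k(0,0)/\k(z,w) = \k(0,0)\,L(z,w)$ positive semidefinite. Granting this, rotation invariance makes $\{z^n : c_n \neq 0\}$ an orthogonal basis of $\H_L$ (with $\|z^n\|_{\H_L}^2 = c_n^{-1}$, forcing $c_n \geq 0$), so $\dim \H_L$ equals the number of nonzero coefficients of $L$, which is finite precisely when $L$ is a polynomial in $z\bar w$.

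The heart of the argument is the equality between the embedding dimension and $\dim\H_L$. For the upper bound I would take an orthonormal basis $\{e_i\}$ of $\H_L$, so that $L(z,w) = \sum_i e_i(z)\overline{e_i(w)}$, and set $b(z) = (\sqrt{\k(0,0)}\,e_i(z))_i$ and $\delta \equiv \sqrt{\k(0,0)}$; then $\langle b(z), b(w)\rangle = \k(0,0)\,L(z,w) = 1 - \k(0,0)/\k(z,w)$, which lies in $[0,1)$ on the diagonal, so $b : \D \to \B_d$ with $d = \dim\H_L$ realizes $\k$ in the form of Theorem \ref{thrmm: Agler-McCarthy}. For the lower bound I would start from an arbitrary representation $\k(z,w) = \delta(z)\overline{\delta(w)}\,(1 - \langle b(z), b(w)\rangle)^{-1}$ with $b : \D \to \B_d$ and normalize it without changing $d$: composing $b$ with the involutive automorphism $\Phi_a$ of $\B_d$ interchanging $0$ and $a = b(0)$, and using the identity $1 - \langle \Phi_a(x), \Phi_a(y)\rangle = (1-\|a\|^2)(1-\langle x,y\rangle)[(1-\langle x,a\rangle)(1-\langle a,y\rangle)]^{-1}$, I may assume $b(0) = 0$. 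Evaluating the representation at $w = 0$ then forces $\delta$ to be the constant $\sqrt{\k(0,0)}$, so $\langle b(z), b(w)\rangle = \k(0,0)\,L(z,w)$. Since each kernel section $\k(0,0)\,L(\cdot,w) = \sum_{i=1}^d \overline{b_i(w)}\,b_i$ is a combination of the $d$ coordinate functions of $b$, we get $\dim\H_L \leq d$, completing the identification.

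The step I expect to be the main obstacle is the lower bound, specifically ruling out that a cleverly chosen non-constant $\delta$ or a non-holomorphic $b$ could lower the dimension below $\dim\H_L$. The maneuver that resolves this is moving the base point by a ball automorphism to force $b(0)=0$, after which the representation becomes rigid, $\delta$ is forced to be constant, and everything collapses to the intrinsic invariant $\dim\H_L$. The remaining ingredients --- the coefficient expansion of $1/\k$, the orthogonality of monomials in $\H_L$, and the polynomial-versus-power-series dichotomy --- are routine once this normalization is in hand.
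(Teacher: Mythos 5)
Your proposal is correct, and its skeleton matches the paper's: both arguments normalize an arbitrary representation by composing $b$ with a ball automorphism so that $b(0)=0$ (the paper merely asserts this step is harmless; you supply the explicit identity for $1-\langle \Phi_a(x),\Phi_a(y)\rangle$ that justifies it), both evaluate at $w=0$ --- using $\k(z,0)=\k(0,0)$, a consequence of rotation invariance --- to force $\de$ to be constant, and both use the same explicit polynomial embedding for the upper bound (your orthonormal basis of $\H_L$ is exactly the paper's $b(z)=(\sqrt{r_{n_1}}z^{n_1},\ldots,\sqrt{r_{n_k}}z^{n_k})$). Where you genuinely diverge is the lower bound. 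The paper expands $b(z)=\sum_{n\ge 1}b_nz^n$ and reads off that the Taylor coefficient vectors $b_n$ are orthogonal with $\|b_n\|^2=r_n$, so at most $d$ of the $r_n$ are nonzero; this requires $b$ to be analytic, which the paper asserts without proof (the map $b$ furnished by the Agler--McCarthy theorem is a priori just a map into $\B_d$; analyticity can be recovered by the spanning argument in the paper's Lemma \ref{lem: k positive c positive} after first cutting $d$ down to $\dim \Span b(\D)$, but the paper does not say this). Your route instead identifies the embedding dimension with $\dim\H_L$ for $L=1/\k(0,0)-1/\k$: each kernel section $\k(0,0)L(\cdot,w)=\sum_{i=1}^{d}\overline{b_i(w)}\,b_i$ lies in the span of the $d$ coordinate functions of $b$, and since a $d$-dimensional space of functions is closed under the pointwise convergence that norm convergence in an RKHS entails, $\dim\H_L\le d$ with no regularity of $b$ needed at all. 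So your counting mechanism is slightly more robust and quietly repairs the paper's unproved analyticity claim, at the cost of invoking the McCullough--Quiggin/Agler--McCarthy positivity of $\k(0,0)L$ at the base point $0$ (legitimate here precisely because $\k(z,0)=\k(0,0)$), whereas the paper gets $r_n=\|b_n\|^2\ge 0$ for free from the embedding itself; the paper's version buys a more concrete picture of the coefficients, yours a cleaner and marginally more general proof.
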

        This theorem is essentially the one-dimensional case of \cite[Proposition 11.8]{hartz2017isomorphism}.

        If one relaxes the embedding conditions so that the isomorphism given by the composition is not necessarily isometric, then, unlike in Theorem \ref{thrm: isometric embedding}, it is enough for $1/\k(0,0) - 1/\k$ to be rational to have finite (even $d = 1$) embedding dimension. 
        \begin{theorem} \label{thrm: rational q}
            Let $\H$ be an irreducible complete Pick space of analytic functions on the unit disc $\D$ with kernel $\k$.
            Suppose 
            \begin{equation*}
                \frac{1}{\k(0,0)} - \frac{1}{\k(z, w)} = q(z \bar w)   
            \end{equation*} 
            for a rational function $q$ with $q'(0) \ne 0$. If $\H$ does not extend to a larger disc, then $\H = H^2(\D)$ with equivalent norms.  
        \end{theorem}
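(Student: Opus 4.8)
The plan is to extract from the hypotheses enough information about the Taylor coefficients of $\k$ to force it to behave like the Szegő kernel. First I would record that the identity $1/\k(0,0) - 1/\k(z,w) = q(z\bar w)$ makes $\k$ a function of $z\bar w$ alone, namely $\k(z,w) = F(z\bar w)$ with
\begin{equation*}
    F(x) = \frac{1}{c - q(x)}, \qquad c := \frac{1}{\k(0,0)},
\end{equation*}
so that, writing $F(x) = \sum_{n\ge 0} a_n x^n$, we have $\k(z,w) = \sum_n a_n (z\bar w)^n$ with $a_n \ge 0$ (since $\k$ is a kernel). The goal is to prove $0 < \inf_n a_n \le \sup_n a_n < \infty$, which is exactly the assertion that $\H = H^2(\D)$ with equivalent norms.

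The first substantive step is to use complete Pick-ness to fix the signs of the coefficients of $q$. By the Agler--McCarthy representation (Theorem \ref{thrmm: Agler-McCarthy}), normalized so that the embedding $b$ fixes the origin---whence the scalar factor $\de$ is forced to be constant because $\k(z,0)\equiv\k(0,0)$---the kernel
\begin{equation*}
    1 - \frac{\k(0,0)}{\k(z,w)} = \k(0,0)\, q(z\bar w)
\end{equation*}
is positive semidefinite. A function of $z\bar w$ is positive semidefinite precisely when its Taylor coefficients are nonnegative, so $q(x) = \sum_{n\ge 1} q_n x^n$ with $q_n \ge 0$, and $q_1 = q'(0) > 0$ by hypothesis. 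Expanding $F\cdot(c-q)=1$ then gives $a_0 = 1/c$ and the recursion $a_n = c^{-1}\sum_{k=1}^n q_k a_{n-k}$, from which $a_n > 0$ for all $n$ by induction.

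Next I would locate the singularities of the rational function $F$. The condition that $\H$ does not extend to a larger disc means precisely that the radius of convergence $\rho_F$ of $F$ equals $1$ (functions in $\H$ extend analytically to the disc of radius $\sqrt{\rho_F}$, so non-extension forces $\sqrt{\rho_F}=1$). Because $a_n \ge 0$, Pringsheim's theorem places a singularity of $F$ at $x=1$; as $F$ is rational this is a pole, so $q(1) = c$. A short argument rules out $\rho_q \le 1$: a pole of $q$ in $\overline{\D}$ would make $q$ increase to $+\infty$ on a subinterval of $[0,1)$ and hence attain the value $c$ strictly inside $\D$, giving a pole of $F$ inside $\D$ and contradicting $\rho_F = 1$. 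Thus $q$ is analytic across $\overline{\D}$, $q'(1) = \sum_n n q_n > 0$, and the zero of $c-q$ at $x=1$ is simple, so $F$ has a simple pole there. Finally, for $|\beta| = 1$ one has $|q(\beta)| \le \sum_n q_n = q(1) = c$, with equality only if $\beta^n = 1$ whenever $q_n > 0$; since $q_1 > 0$ this forces $\beta = 1$. Hence $x=1$ is the unique pole of $F$ on the unit circle and every other pole satisfies $|x| > 1$.

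The conclusion is then a routine asymptotic computation. Partial fractions give $F(x) = \frac{A}{1-x} + G(x)$ with $A = 1/q'(1) > 0$ and $G$ rational with poles only in $|x| > 1$, hence analytic on a disc of radius $\rho' > 1$. Therefore $a_n = A + O((\rho'')^{-n}) \to A > 0$ for any $1 < \rho'' < \rho'$, and together with $a_n > 0$ this yields $0 < \inf_n a_n \le \sup_n a_n < \infty$, so $\|f\|_{\H}^2 = \sum_n |c_n|^2 / a_n \asymp \sum_n |c_n|^2 = \|f\|^2_{H^2(\D)}$. I expect the main obstacle to be the singularity analysis of the third paragraph: one must exploit the nonnegativity of the coefficients of $q$ in an essential way---through Pringsheim's theorem to force the governing pole to sit exactly at $x=1$, and through the modulus bound $|q(\beta)| \le q(1)$ to exclude other unit-circle poles---and confirm that the pole at $1$ is simple, since a higher-order pole or a second pole on $|x|=1$ would make $a_n$ unbounded or oscillatory and break the equivalence with $H^2(\D)$.
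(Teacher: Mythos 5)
Your proof is correct, but it takes a genuinely different route from the paper's. The paper disposes of the rational case in two lines: from the normalized complete Pick property it has $r_n \ge 0$ and $\sum_n r_n \le 1$, hence $|q| \le 1$ on $\overline{\D}$, so the finitely many poles of $q$ lie outside $\overline{\D}$ and $q$ is analytic in $R\D$ for some $R>1$; it then invokes Theorem~\ref{thrm: analytic 1 - 1/k}, whose engine is the Erd\"os--Feller--Pollard renewal theorem applied to the recurrence $c_n = \sum_{m=0}^{n-1} c_m r_{n-m}$ (with $\sum r_n = 1$ forced by non-extension), giving $c_n \to 1/\mu$, $\mu = q'(1)$. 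You never touch the renewal theorem: you use Pringsheim on $F = 1/(c-q)$ (non-extension pins the radius of convergence of $F$ at $1$) to force the pole at $x=1$, the triangle inequality $|q(\beta)| \le q(1)$ with $q_1 > 0$ to exclude other unit-circle poles, simplicity of the pole from $q'(1) \ge q_1 > 0$, and partial fractions to get $a_n = 1/q'(1) + O((\rho'')^{-n})$ --- the same limit $1/q'(1) = 1/\mu$, now with a geometric rate. What each approach buys: yours is elementary and self-contained (the rational theorem no longer depends on Theorem~\ref{thrm: analytic 1 - 1/k}) and yields explicit asymptotics, but it exploits rationality essentially and cannot replace the renewal-theorem argument in the merely-analytic case, which the paper's route covers uniformly with the rational statement as a corollary. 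One small point you should make explicit: before expanding $q = \sum_{n\ge1} q_n x^n$ and applying the nonnegative-coefficient lemma, note that $q$ is a priori analytic on all of $\D$ --- irreducibility makes $\k$ nonvanishing, so $c - 1/\k(z,w)$ is finite and analytic in each variable on $\D\times\D$, and a pole of the rational $q$ inside $\D$ would therefore be removable, a contradiction; your later IVT/Pringsheim step already presupposes the coefficient nonnegativity, so it does not by itself close this loop.
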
   
        The same result holds for $q$ analytic in a larger disc.
        \begin{theorem} \label{thrm: analytic 1 - 1/k}
            Let $\H$ be an irreducible complete Pick space of analytic functions on the unit disc $\D$ with kernel $\k$.
            Suppose 
            \begin{equation*}
                \frac{1}{\k(0,0)} - \frac{1}{\k(z, w)} = q(z \bar w)   
            \end{equation*} 
            for $q$ analytic in $R \D$ for some $R > 1$ with $q'(0) \ne 0$. If $\H$ does not extend to a larger disc, then $\H = H^2(\D)$ with equivalent norms.
        \end{theorem}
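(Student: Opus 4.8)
The plan is to reduce everything to a singularity analysis of a single power series in the variable $x = z\bar w$, just as one does for the rational case of Theorem \ref{thrm: rational q}, but replacing partial fractions with complex analysis in $R\D$. First I would record the shape of the kernel. Writing $c = \k(0,0) > 0$ and using $q(0) = 0$, the hypothesis gives
\begin{equation*}
    \k(z,w) = \frac{c}{1 - c\, q(z \bar w)}.
\end{equation*}
Since $\k(z,0) = \k(0,w) = c$ is constant, normalizing at the origin merely rescales the space by $\sqrt{c}$, so it suffices to study the normalized kernel $\tilde\k(z,w) = (1 - c\, q(z\bar w))^{-1}$. Because $\H$ is complete Pick, the normalized kernel satisfies that $1 - 1/\tilde\k = c\, q(z\bar w)$ is a positive semidefinite kernel on $\D$; as a function of $z\bar w$ this forces every Taylor coefficient of $q$ to be nonnegative. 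Writing $q(x) = \sum_{n\ge 1} q_n x^n$ we thus have $q_n \ge 0$, and $q'(0) \ne 0$ means $q_1 > 0$.

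Next I would pass to the diagonal generating function $F(x) = (1 - c q(x))^{-1} = \sum_{n \ge 0} a_n x^n$. Since $c q$ has nonnegative coefficients and vanishes at $0$, so does $F = \sum_k (c q)^k$, and in fact $a_n \ge (c q_1)^n > 0$. The space $\H$ is then the rotation-invariant space with $\langle z^m, z^n\rangle = \delta_{mn}/a_n$, so proving $\H = H^2(\D)$ with equivalent norms amounts to proving that $(a_n)$ is bounded above and below. Validity of $\tilde\k$ on $\D$ forces $c q(t) < 1$ for $t \in [0,1)$, hence $c q(1) \le 1$. Here I use crucially that $q$ is analytic past the unit circle: if $c q(1) < 1$ then $|c q(x)| \le c q(|x|) \le c q(1) < 1$ on $\overline\D$, so $F$ is analytic on a neighborhood of $\overline\D$ and every function of $\H$ extends to a strictly larger disc, contradicting the hypothesis. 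Therefore $c q(1) = 1$.

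The heart of the argument is the local structure of $F$ on the unit circle. Since $q'(1) = \sum_n n q_n \ge q_1 > 0$, the point $x = 1$ is a simple zero of $1 - c q$, so $F$ has a simple pole there with principal part $\tfrac{A}{1 - x}$, $A = 1/(c q'(1)) > 0$. I would then rule out any other singularity on $|x| = 1$: if $c q(x) = 1$ with $|x| = 1$, then equality in $1 = |c q(x)| \le \sum_n c q_n |x|^n = c q(1) = 1$ forces $c q_n x^n \ge 0$ for every $n$ with $q_n > 0$, and taking $n = 1$ (where $q_1 > 0$) gives $x = 1$. Thus $x = 1$ is the unique singularity of $F$ on $\overline\D$, and since the zeros of the $R\D$-analytic function $1 - c q$ are isolated with none in $\overline\D \setminus \{1\}$, there is $\rho > 1$ such that $G(x) = F(x) - \tfrac{A}{1 - x}$ is analytic on $\rho\D$. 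Expanding, $a_n = A + g_n$ with $|g_n| = O(\rho^{-n})$, so $a_n \to A > 0$; combined with $a_n \ge (c q_1)^n > 0$ this yields $0 < \inf_n a_n \le \sup_n a_n < \infty$, i.e.\ $\H = H^2(\D)$ with equivalent norms.

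\textbf{Main obstacle.} I expect the delicate point to be the passage from the rational case: establishing that $x = 1$ is the \emph{only} boundary singularity and that it is a \emph{simple} pole. In the rational setting this is immediate from partial fractions, whereas here it rests on three inputs — complete-Pick positivity (nonnegativity of $q_n$), the nondegeneracy $q'(0) \ne 0$ (to pin the unique circle singularity at $1$ via the equality case of the triangle inequality), and analyticity of $q$ in $R\D$ (to convert ``simple pole plus analytic remainder'' into geometric decay of $g_n$). The cleanest framing combines Pringsheim's theorem with this equality analysis.
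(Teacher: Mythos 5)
Your proposal is correct, and it diverges from the paper's proof at precisely the decisive step. The preliminary reductions coincide: the paper likewise normalizes $\k(0,0)=1$, uses the complete Pick property together with the nonnegativity lemma (Lemma \ref{lem: k positive c positive}) to write $1 - 1/\k = \sum_{n\ge1} r_n (z\bar w)^n$ with $r_n \ge 0$ and $\sum_{n\ge1} r_n \le 1$, notes $r_1 = q'(0) > 0$, and derives $q(1) = \sum_{n\ge1} r_n = 1$ by the same extension-to-a-larger-disc contradiction you give (your $c\,q$ is the paper's $q$ after normalization). But where you perform a singularity analysis of $F(x) = \bigl(1 - q(x)\bigr)^{-1}$ --- a simple pole at $x=1$ with principal part $A/(1-x)$, $A = 1/q'(1)$, uniqueness of the boundary singularity via the equality case of the triangle inequality (using $q_1 > 0$), and a radius $\rho > 1$ in which $1-q$ has no other zeros, yielding $a_n = A + O(\rho^{-n})$ --- the paper instead observes that the kernel coefficients satisfy the renewal recurrence $c_n = \sum_{m=0}^{n-1} c_m r_{n-m}$ and invokes the Erd\"os--Feller--Pollard theorem \cite{Feller1968}, with $\sum_{n\ge1} r_n = 1$, $\mu = \sum_{n\ge1} n r_n = q'(1) < \infty$, and aperiodicity supplied by $r_1 > 0$, to conclude $c_n \to 1/\mu > 0$; the upper bound $c_n \le 1$ is a separate induction there. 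The two arguments identify the same limit, and your triangle-inequality equality case is exactly the complex-analytic avatar of the aperiodicity hypothesis in the renewal theorem. The trade-off: the paper's route is a clean black-box citation that would survive weaker regularity (it needs only $\sum r_n = 1$ and $\sum n r_n < \infty$, the analyticity in $R\D$ serving merely to secure these), whereas your route uses the hypothesis $R > 1$ intrinsically and in return yields the stronger quantitative conclusion that the coefficients converge geometrically. One small point to patch in a write-up: cite the paper's Theorem \ref{thrm: rotation inv to k} for your assertion that $\H$ is the rotation-invariant space with orthogonal monomials and $\lVert z^n \rVert^2 = 1/a_n$, which you stated without justification; with that in place, $0 < \inf_n a_n \le \sup_n a_n < \infty$ indeed gives $\H = H^2(\D)$ with equivalent norms.
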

        As an example consider a space $\H$ with
        \begin{equation*}
            q(t) =\frac{t}{2(2 - t)}.
        \end{equation*}
        It gives us the Hardy space with a slightly modified norm, i.e.,
        \begin{equation*}
            \H = \left\{ f = \sum_{n = 0}^{\infty} a_n z^n: \: ||f||^2_{\H} = \frac{|a_0|^2}{2} + \sum_{n = 1}^{\infty} |a_n|^2 \right\},
        \end{equation*}
        but the embedding dimension of $\H$ is infinite.
        
        Next, we consider a family of Hardy-type spaces. 
        \begin{definition}
            \emph{The weighted Hardy space} $\H_s, \, s \in \R$ on $\D$ is 
            \begin{equation*}
                \H_s = \left\{ f(z) = \sum_{n \ge 0} a_n z^n: \: ||f||^{2}_{s} = \sum_{n \ge 0} (1 + n)^{-s} |a_n|^2 < \infty \right\}.
            \end{equation*}
        \end{definition}
        In particular, for $s = 0$ we get the regular Hardy space $H^2(\D)$ and for $s = -1$ we get the Dirichlet space.
        By \cite[Corollary 7.41]{AglBook}, $\H_s, \ s \le 0$ are complete Pick spaces.

        Similarly to \cite[Corollary 11.9]{hartz2017isomorphism} we find the embedding dimensions of these spaces.
        \begin{theorem} \label{thrm: embedding dimension of weighted}
            For $s = 0$, the embedding dimension of $\H_0$ is $d = 1$.
            For $s < 0$, the embedding dimension of $\H_s$ is $d = \infty$.
        \end{theorem}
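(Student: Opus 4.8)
The plan is to derive both statements from Theorem~\ref{thrm: isometric embedding}, so the first step is to identify the kernel. Since the monomials $z^n$ are orthogonal in $\H_s$ with $\|z^n\|_s^2 = (1+n)^{-s}$, the reproducing kernel is
\[
\k(z,w) = \sum_{n\ge 0} (1+n)^s (z\bar w)^n ,
\]
and in particular $\k(0,0)=1$, so $1/\k(0,0)=1$. Writing $t = z\bar w$ and $K(t) = \sum_{n\ge 0}(1+n)^s t^n$, Theorem~\ref{thrm: isometric embedding} tells us that the embedding dimension is finite exactly when $1 - 1/K(t)$ is a polynomial in $t$, and that $d$ is then the number of its nonzero coefficients. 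For $s=0$ we have $K(t)=\sum_n t^n = 1/(1-t)$, hence $1 - 1/K(t) = 1 - (1-t) = t$, a polynomial with a single nonzero coefficient; Theorem~\ref{thrm: isometric embedding} gives $d=1$.

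For $s<0$ the goal is to show that $1 - 1/K(t)$ is \emph{not} a polynomial, which by Theorem~\ref{thrm: isometric embedding} yields $d=\infty$. Because $K(0)=1$, the expression $1-1/K(t)$ is a polynomial if and only if $1/K(t)$ is a polynomial, and that would force $K = 1/(\text{polynomial})$ to be a rational function. So it suffices to prove that $K$ is not rational. The main step is therefore the following claim: for $s<0$ the sequence $c_n := (1+n)^s$ is not the coefficient sequence of a rational power series.

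To prove the claim I would use that a power series is rational if and only if its coefficients eventually satisfy a constant-coefficient linear recurrence, equivalently $c_n = \sum_j p_j(n)\,\rho_j^{\,n}$ for finitely many polynomials $p_j$ and distinct nonzero $\rho_j$. Since $(1+n)^{s/n}\to 1$, the radius of convergence of $K$ is $1$, so $\max_j |\rho_j| = 1$; in particular at least one term with $|\rho_j|=1$ and $p_j\neq 0$ is present. Split $c_n = A(n)+B(n)$, where $A$ collects the terms with $|\rho_j|=1$ and $B$ the terms with $|\rho_j|<1$. Then $B(n)\to 0$ exponentially fast, while $c_n=(1+n)^s\to 0$ only at a polynomial rate, so $A(n)=c_n-B(n)\to 0$. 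But $A$ is a nonzero polynomial-exponential sum supported on the unit circle, and a standard Ces\`aro/Weyl averaging argument (averaging $A(n)\overline{\rho_j}^{\,n}$, after dividing by the appropriate power of $n$) shows that such a sum can tend to $0$ only if it vanishes identically. This contradicts $A\not\equiv 0$, so $K$ is not rational and $d=\infty$. (For $s<-1$ there is an even quicker route: $K(1)=\sum_n(1+n)^s<\infty$, so $K$ extends continuously to $t=1$ and cannot have a pole there; yet by Pringsheim's theorem $t=1$ is a singular point, which is impossible for a rational function.)

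The step I expect to be the genuine obstacle is ruling out rationality uniformly in $s$: several poles may lie on the unit circle at once, so one cannot simply read the asymptotics of $c_n$ off a single dominant pole, and the delicate point is showing that the oscillatory unit-circle contributions cannot conspire to cancel the non-decaying part and reproduce the purely polynomial decay $(1+n)^s$. This is exactly what the averaging argument is designed to rule out, and making that estimate precise (including the case $-1\le s<0$, where $K(1)=\infty$) is where the real work lies.
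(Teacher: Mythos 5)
Your proof is correct, and for $s=0$ it agrees with the paper in substance (the paper simply exhibits $b(z)=z$ directly instead of computing $1-1/K$, but both rest on Theorem \ref{thrm: isometric embedding}). For $s<0$, however, you take a genuinely different route. The paper never argues about rationality of the kernel itself: it notes that finite embedding dimension would make $1/\k(0,0)-1/\k$ a polynomial $p(z\bar w)$ with $p'(0)=r_1=c_1=2^s\neq 0$, and then invokes its Theorem \ref{thrm: rational q} (which reduces to Theorem \ref{thrm: analytic 1 - 1/k} and, ultimately, to the Erd\"os--Feller--Pollard renewal theorem applied to the recurrence $c_n=\sum_{m=0}^{n-1}c_m r_{n-m}$) to conclude that $\H_s$ would then equal $H^2(\D)$ with equivalent norms, contradicting $\|z^n\|^2_{\H_s}=(1+n)^{-s}\to\infty$. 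You instead prove the stronger statement that $K(t)=\sum_{n\ge0}(1+n)^st^n$ is not rational, via the polynomial-exponential form $c_n=\sum_j p_j(n)\rho_j^{\,n}$ of coefficients of rational series, the radius-of-convergence-one observation forcing a nontrivial unit-circle part $A(n)$, and a Ces\`aro/Weyl averaging argument showing a nonzero polynomial-exponential sum on the unit circle cannot tend to $0$; your sketch of that lemma (divide by $n^D$ with $D$ the top polynomial degree, average against $\overline{\rho_k}^{\,n}$, induct downward on degree) is the standard one and goes through, so the step you flag as the main obstacle is not an actual gap. As for what each approach buys: the paper's argument is short given the machinery it has already built and yields the structural results of Theorems \ref{thrm: rational q} and \ref{thrm: analytic 1 - 1/k} as by-products, but it leans on the renewal theorem, on the complete Pick positivity $r_n\ge0$, $\sum r_n\le 1$, and on the (left implicit) verification that $\H_s$ does not extend to a larger disc; your argument is elementary and self-contained, uses the complete Pick property only through Theorem \ref{thrm: isometric embedding}, and in fact rules out even rational $1-1/\k$ rather than just polynomial, at the cost of having to write out the averaging lemma. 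Your Pringsheim shortcut for $s<-1$ is also correct: positivity of the coefficients makes $t=1$ a singular point, yet $K$ stays bounded on $(0,1)$, which is incompatible with the only available rational singularity, a pole.
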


        It is also interesting to consider the non-isometric version of the question, as it allows us to connect this problem to the isomorphism problem for analytic discs attached to the unit sphere. 

        \begin{theorem} \label{thrm: Hs isnt analytic disc}
            Suppose $V$ is an analytic disc attached to the unit sphere, $f$ is the embedding map of $V$. Then for $s < 0$
            \begin{equation*}
                \H_s \ne \H_f.
            \end{equation*}
        \end{theorem}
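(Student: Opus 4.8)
The plan is to reduce the statement to an equality of reproducing kernels and then exploit the fact that the kernel of $\H_s$ depends only on the product $z\bar w$, while $\langle f(z),f(w)\rangle$ genuinely depends on the holomorphic variable $z$ and the anti-holomorphic variable $\bar w$ separately. First I would record that two reproducing kernel Hilbert spaces of functions on $\D$ coincide as normed spaces if and only if their reproducing kernels coincide; hence it suffices to prove $\k_s \ne \k^f$, where $\k_s(z,w)=\sum_{n\ge 0}(1+n)^s (z\bar w)^n$ is the kernel of $\H_s$ and $\k^f(z,w)=\bigl(1-\langle f(z),f(w)\rangle\bigr)^{-1}$ by \eqref{eq: k^f formula}.

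Second, I would argue by contradiction: suppose $\k_s=\k^f$. Equating the two expressions gives
\[
\langle f(z),f(w)\rangle = 1 - \frac{1}{\sum_{n\ge 0}(1+n)^s (z\bar w)^n},
\]
so the left-hand side is a function of the single variable $z\bar w$. Writing $f=(f_1,\dots,f_d)$ with $d<\infty$, $f_j(z)=\sum_n a_{j,n}z^n$, and collecting the Taylor coefficient vectors $a_n=(a_{j,n})_{j=1}^d\in\C^d$, one has $\langle f(z),f(w)\rangle=\sum_{n,m}\langle a_n,a_m\rangle z^n\bar w^m$. Since the right-hand side contains only diagonal monomials $z^n\bar w^n$, the coefficient of $z^n\bar w^m$ must vanish for $n\ne m$, i.e.\ $\langle a_n,a_m\rangle=0$. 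Thus the vectors $\{a_n\}$ are pairwise orthogonal in the finite-dimensional space $\C^d$, so at most $d$ of them are non-zero, and $\langle f(z),f(w)\rangle=\sum_n\|a_n\|^2 (z\bar w)^n$ is a \emph{polynomial} in $z\bar w$.

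Third, this forces $1/\k_s$ to be a polynomial in $z\bar w$, equivalently $\k_s$ to be the reciprocal of a polynomial, hence a rational function of $z\bar w$. But for $s<0$ the weight sequence $(1+n)^s$ decays polynomially and satisfies no constant-coefficient linear recurrence, so $\k_s$ is not rational; this is exactly the non-polynomiality of $1/\k_s(0,0)-1/\k_s$ that underlies Theorem \ref{thrm: embedding dimension of weighted}. The contradiction establishes $\H_s\ne\H_f$.

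Conceptually, the argument is cleanest when packaged through the embedding dimension: the realization $\k^f=(1-\langle f(z),f(w)\rangle)^{-1}$ with $f\colon\D\to\B_d$, $d<\infty$, exhibits $\H_f$ as a complete Pick space of embedding dimension at most $d$, whereas $\H_s$ has infinite embedding dimension for $s<0$ by Theorem \ref{thrm: embedding dimension of weighted}; since the embedding dimension is determined by the kernel, equal spaces would have equal embedding dimensions. I expect the only real obstacle to be the second step, namely justifying that the equality $\k_s=\k^f$ collapses $\langle f(z),f(w)\rangle$ to a function of $z\bar w$ and thereby to a polynomial. Once the coefficient vectors are seen to be orthogonal and finite in number, the contradiction with the non-rational, polynomially decaying weights $(1+n)^s$ is immediate.
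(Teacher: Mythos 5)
Your proposal is correct in substance, but it takes a genuinely different route from the paper, and the two proofs differ in what exactly they exclude. The paper's proof is two lines long: if $\H_s = \H_f$, then since the function $z \mapsto z$ lies in $\H_s$ and is injective and continuous on $\overline{\D}$, Theorem \ref{crl: xi = zet} forces $f$ to have no self-crossings on $\TT$, so $f$ is injective on $\overline{\D}$; Theorem \ref{thrmm: injective embedding} then gives $\H_f = H^2(\D)$ with equivalent norms, contradicting the strict inclusion $\H_s \subsetneq H^2(\D)$ for $s < 0$. Note that this argument uses only equality of $\H_s$ and $\H_f$ as \emph{sets} of functions, so it rules out even coincidence with equivalent norms, whereas your reduction to $\k_s = \k^f$ tacitly assumes the norms agree exactly (equality of normed function spaces yields equal kernels by polarization); as written, you therefore exclude only isometric equality --- the most natural reading of the statement, but formally less than what the paper's proof delivers, and there is no easy patch, since with merely equivalent norms the kernels need not coincide and the diagonality of $\langle f(z), f(w)\rangle$ breaks down. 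In exchange, your argument buys something the paper's does not: it never uses the boundary regularity of $f$. The diagonality of $\langle f(z), f(w)\rangle$, the pairwise orthogonality of the Taylor coefficient vectors $a_n \in \C^d$, and hence the polynomiality of $1 - 1/\k_s$ in $z \bar w$ hold for \emph{any} analytic $f: \D \to \B_d$ with $d < \infty$; in effect you observe that $\k_s = \k^f$ would exhibit an embedding dimension at most $d$ for $\H_s$, against Theorem \ref{thrm: embedding dimension of weighted}, which (in the isometric category) answers the remark the paper makes immediately after the theorem about non-smooth $f$.

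One small repair: your assertion that $(1+n)^s$ ``satisfies no constant-coefficient linear recurrence, so $\k_s$ is not rational'' is left informal. The clean way, which you already hint at, is to bypass it entirely: if $1 - 1/\k_s = p(z\bar w)$ for a polynomial $p$, note $p'(0) = r_1 = 2^s \ne 0$ and that $\H_s$ does not extend past $\D$, so Theorem \ref{thrm: rational q} would give $\H_s = H^2(\D)$ with equivalent norms, contradicting $\|z^n\|^2_{\H_s} = (1+n)^{-s} \to \infty$ --- exactly the mechanism in the paper's proof of Theorem \ref{thrm: embedding dimension of weighted}.
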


        Though, it remains unclear whether allowing $f$ which are not smooth up to the boundary is going to change the conclusion.

%
%
%

\include{main/prelims}

\chapter{Analytic Discs} 
    \section{Properties of functions in \texorpdfstring{$\H_f$}{Hf} and \texorpdfstring{$\M_f$}{Mf}}
    We start by listing some necessary properties which must be satisfied by functions in the Hilbert spaces $\H_f$ and the multiplier algebras $\M_f$.
    \begin{theorem} \label{thrm: 1 = -1}
        Suppose $V$ is an analytic disc attached to the unit sphere, and $f$ is its embedding map such that $f(-1) = f(1)$. Let $h$ be a function in $\H_f$ such that the following limits exist
        \begin{equation*}
            h(1) = \lim_{r \to 1-} h(r), \qquad h(-1) = \lim_{r \to 1-} h(-r).
        \end{equation*}
        Then $h(1) = h(-1)$.
    \end{theorem}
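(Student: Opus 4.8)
The plan is to first pin down where the identity $h(1)=h(-1)$ actually comes from, and then to face the only genuine difficulty, namely transporting it from a dense subspace to a single $h$ at two boundary points. The algebraic source is clean: expanding $\k^f(z,w)=\sum_{n\ge0}\langle f(z),f(w)\rangle^{n}$ (legitimate since $\|f\|<1$ on $\D$) shows that $\H_f=\Cl\Span\{f^{\a}:\a\in\N_0^d\}$, where $f^{\a}=\prod_j f_j^{\a_j}$. Each generator is $C^2$ up to $\overline{\D}$, being a product of the $C^2$ coordinates $f_j$, and the assumption $f(1)=f(-1)$ forces $f^{\a}(1)=f^{\a}(-1)$. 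So the conclusion holds on a dense set of functions that are even continuous up to the boundary, and the whole content of the theorem is that it persists for a general $h$ of which we know only that its two radial limits exist.

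The naive move is to pair $h$ against $\k^f_r-\k^f_{-r}$, since $\langle h,\k^f_r-\k^f_{-r}\rangle=h(r)-h(-r)\to h(1)-h(-1)$. I expect this to fail, and seeing why dictates the right tools. Transversality $\langle f(\xi),f'(\xi)\rangle\neq0$ together with $\|f(e^{i\theta})\|\equiv1$ makes $A_f(\xi)=\langle f(\xi),\xi f'(\xi)\rangle$ real and positive, and a one-sided expansion gives $1-\|f(r)\|^2\sim 2A_f(1)(1-r)$, $1-\|f(-r)\|^2\sim 2A_f(-1)(1-r)$, and $1-\langle f(r),f(-r)\rangle\sim (A_f(1)+A_f(-1))(1-r)$. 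Substituting into
\[
\|\k^f_r-\k^f_{-r}\|^2=\k^f(r,r)+\k^f(-r,-r)-2\Re\k^f(r,-r)
\]
produces a leading term proportional to $(A_f(1)-A_f(-1))^2/(1-r)$, which blows up unless $A_f(1)=A_f(-1)$. Thus boundary point-evaluation, and even the difference of the two evaluations, is an \emph{unbounded} functional on $\H_f$, so no soft density or closed-graph argument can push $g(1)=g(-1)$ from the generators to $h$; the hypothesis that both radial limits exist must be used in an essential, non-Hilbertian way.

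This forces a boundary-value approach. The plan is to show that every $h\in\H_f$ has nontangential boundary values realized in an $H^2$-type sense: near each of $\pm1$ the map $f$ is injective with transversal contact, so locally $\k^f$ is comparable to the Szeg\H{o} kernel (the local content of Theorem~\ref{thrmm: injective embedding}), which lets one identify the assumed radial limit $h(\pm1)$ with the nontangential boundary value of $h$ at $\pm1$. Writing $h=H\circ f$ with $H\in\H_V$ and approximating by $G_n\circ f$ with $G_n$ polynomial, the boundary trace of $h$ should be a genuine function on the image curve $f(\TT)=\partial V$; since $f$ sends both $1$ and $-1$ to the single point $p=f(1)=f(-1)\in\partial V$, the two nontangential values must coincide, yielding $h(1)=h(-1)$.

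The main obstacle is precisely this last descent. Point values on $\partial V$ are a measure-zero datum, so one cannot read $h^{*}(1)=h^{*}(-1)$ off $L^2$-convergence of boundary traces. Turning the almost-everywhere relation ``$h^{*}(e^{i\theta})=h^{*}(e^{i\theta'})$ whenever $f(e^{i\theta})=f(e^{i\theta'})$'' into an honest statement at the two exceptional points $\pm1$, exactly where the radial limits are assumed to exist, is the technical heart of the proof, and it is here that the $C^2$-regularity of $f$ and the positivity $A_f(\pm1)>0$ are indispensable.
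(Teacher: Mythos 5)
Your proposal stalls exactly where you say it does, and the admitted gap is not a technicality --- it is the entire theorem. But before that, your negative claim that steers the whole plan is wrong: the functional $h \mapsto h(1)-h(-1)$ is \emph{not} unbounded on $\H_f$. Your computation only shows that the particular functionals $h \mapsto h(r)-h(-r)$, i.e.\ the vectors $\k^f_r - \k^f_{-r}$ along the \emph{symmetric} path $x=y=1-r$, blow up in norm when $A_f(1)\ne A_f(-1)$; this does not make the limiting functional unbounded on the dense subspace where it is defined (there is no uniform-boundedness contradiction, since the pointwise convergence holds only on that subspace). The paper's proof consists precisely in repairing your failed pairing by re-parametrizing: with $A=A_f(1)$, $B=A_f(-1)$ one takes $z = 1 - t/A$, $w = -1 + t/B$, so that
\begin{equation*}
1-\|f(z)\|^2 = 2t+o(t), \qquad 1-\|f(w)\|^2 = 2t+o(t), \qquad 1-\langle f(z),f(w)\rangle = 2t+o(t),
\end{equation*}
and the three singular terms of order $1/t$ cancel in the determinant of the $2\times 2$ positivity condition $\bigl(\k^f(\cdot,\cdot)-h(\cdot)\overline{h(\cdot)}\bigr)\ge 0$ (here the $C^2$ regularity enters, through the second-order Taylor terms). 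What survives is the uniform two-point estimate $|h(z)-h(w)|^2 \le C\,\|h\|^2_{\H_f}+o(1)$ with $C$ depending only on $f(\pm1), f'(\pm1)$. Hence the difference functional is bounded on the dense subspace of functions with radial limits at $\pm1$, is represented by some $\f\in\H_f$, and $\f=0$ because every kernel function satisfies $\k^f_z(1)=\k^f_z(-1)$ (your own density observation). So a Hilbert-space argument does work --- your Corollary-level obstruction disappears once the two radial rates are matched to $A_f(\pm1)$, which is exactly the role of transversality that you correctly flagged as essential but then abandoned.

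By contrast, the route you propose instead cannot close. An $H^2$-type boundary trace on $f(\TT)$ is an almost-everywhere object, and since $f$ is injective off the crossing set, the relation ``$h^*(\xi)=h^*(\zeta)$ whenever $f(\xi)=f(\zeta)$'' is vacuous a.e.; the assertion that the two boundary sheets of $H$ agree at the single point $p=f(\pm1)$ is a measure-zero statement that $L^2$ convergence of traces cannot see. You acknowledge this (``the technical heart''), but resolving it would require precisely a quantitative comparison of $h$ near $1$ and near $-1$ at matched rates --- i.e.\ the estimate above --- so the unproven step of your plan is equivalent to the theorem itself. As it stands, the proposal correctly assembles the ingredients (density of $\{f^{\a}\}$, positivity of $A_f(\pm1)$, the leading asymptotics of $1-\|f\|^2$ and $1-\langle f(r),f(-r)\rangle$) but draws the wrong conclusion from the symmetric-path blow-up and therefore never produces a proof.
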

    In particular, if $h \in \H_f$ extends continuously up to $\overline \D$, then $h(1) = h(-1)$.

    \begin{proof}
        By appropriately scaling $h$ we can assume $||h||_{\H_f} \le 1$. Since $\H_f$ is an RKHS with kernel $\k^f$ we can use \cite[Theorem 3.11]{paulsen2016introduction} to describe the functions in $\H_f$ in terms of the kernel:
        \begin{equation*}
            ||h||_{\H_f} \le 1 \iff \left( \k^f(z,w) - h(z) \overline{h(w)} \right)_{z, w \in \D} \ge 0.       
        \end{equation*}  
        In particular, using \eqref{eq: k^f formula}, for two points $z, w \in \D$ we obtain
        \begin{equation*}
            \begin{pmatrix}
                \frac{1}{1 - || f(z) ||^2} - |h(z)|^2 & \frac{1}{1 - \langle f(z), f(w) \rangle} - h(z) \overline{h(w)} \\
                \frac{1}{1 - \langle f(w), f(z) \rangle} - h(w) \overline{h(z)} & \frac{1}{1 - || f(w) ||^2} - |h(w)|^2
            \end{pmatrix}
            \ge 0.
        \end{equation*}
        As this condition implies that the determinant is positive, we have:
        \begin{align} 
            \left( \frac{1}{1 - || f(z) ||^2} - |h(z)|^2 \right) \left( \frac{1}{1 - || f(w) ||^2} - |h(w)|^2 \right) - \nonumber \\
            \label{eq: det>0}
            \left| \frac{1}{1 - \langle f(z), f(w) \rangle} - h(z) \overline{h(w)} \right|^2 \ge 0.
        \end{align}
    Now let us take $z = 1 - x, w = -1 + y$ for $x, y > 0$. Expanding $f$ around $1$ and $-1$ we get
    \begin{align*}
        f(1 - x) &= f(1) - f'(1)x + \frac{f''(1)}{2} x^2 + o(x^2), \\ 
        f(-1 + y) &= f(-1) + f'(-1)y + \frac{f''(-1)}{2} y^2 + o(y^2).
    \end{align*}
    Thus, 
    \begin{align} 
        & 1 - ||f(1 - x)||^2 = \nonumber \\ 
        & 2 \Re \langle f(1), f'(1) \rangle x - (||f'(1)||^2 + \Re \langle f(1), f''(1) \rangle)x^2 + o(x^2). \label{eq: 11.1}
    \end{align}
    Similarly,
    \begin{align} 
        & 1 - ||f(-1 + y)||^2 = \nonumber \\
        & -2 \Re \langle f(-1), f'(-1) \rangle y -  (||f'(-1)||^2 + \Re \langle f(-1), f''(-1) \rangle)y^2 + o(y^2). \label{eq: -1-1.1}
    \end{align}
    Note that by \cite[Proposition 3.1, Corollary 3.2]{Dav}, $A = \langle f(1), f'(1) \rangle > 0$ and $B = -\langle f(-1), f'(-1) \rangle > 0$. Let us set
    \begin{align*}
        C &= ||f'(1)||^2, \\
        2F &= \langle f(1), f''(1) \rangle, \\
        D &= ||f'(-1)||^2, \\
        2G &= \langle f(-1), f''(-1) \rangle.   
    \end{align*}
    We get
    \begin{align}
        1 - ||f(1 - x)||^2 &=  2Ax - (C + 2 \Re F)x^2 + o(x^2), \label{eq: 11.2} \\
        1 - ||f(-1 + y)||^2 &=  2By - (D + 2 \Re G)y^2 + o(y^2). \label{eq: -1-1.2}
    \end{align}
    Finally, noting that $f(1) = f(-1)$ and setting $E = \langle f'(1), f'(-1) \rangle$, we get
    \begin{equation} \label{eq: 1-1} 
        1 - \langle f(1-x), f(-1 + y) \rangle = Ax + By + E xy - \bar Fx^2 - Gy^2 + o(x^2) + o(y^2).  
    \end{equation}
    Expanding the brackets in \eqref{eq: det>0} we get
    \begin{align}
        \frac{1}{1 - || f(z) ||^2} \frac{1}{1 - || f(w) ||^2} - \left| \frac{1}{1 - \langle f(z), f(w) \rangle} \right|^2 \ge \label{eq: det:LHS} \\
        \frac{|h(w)|^2}{1 - || f(z) ||^2} + \frac{|h(z)|^2}{1 - || f(w) ||^2} - 2 \Re \left( \frac{\overline{h(z)} h(w)}{1 - \langle f(z), f(w) \rangle} \right) \label{eq: det:RHS}
    \end{align}
    To annihilate the highest order term in \eqref{eq: det:LHS} we set $Ax = By = t > 0$. Hence,
    \begin{align}
        \frac{1}{1 - || f(z) ||^2} = 
        \frac{1}{2t - \frac{C + 2 \Re F}{A^2}t^2 + o(t^2)} = \nonumber \\
        \frac{1}{2t} \left(1 + \frac{C + 2 \Re F}{2A^2}t + o(t) \right). \label{eq: 11.3}
    \end{align}
    Similarly,
    \begin{align}
        \frac{1}{1 - || f(w) ||^2} = 
        \frac{1}{2t - \frac{D + 2 \Re G}{B^2}t^2 + o(t^2)} = \nonumber \\
        \frac{1}{2t} \left( 1 + \frac{D + 2 \Re G}{2B^2}t + o(t) \right). \label{eq: -1-1.3}
    \end{align}
    Finally,
    \begin{align} 
        \frac{1}{1 - \langle f(z), f(w) \rangle} = 
        \frac{1}{2t + \left( \frac{E}{AB} - \frac{\bar F}{A^2} - \frac{G}{B^2} \right) t^2 + o(t^2)} = \nonumber \\
        \frac{1}{2t} \left(1 + \left( \frac{\bar F}{2A^2} + \frac{G}{2B^2} - \frac{E}{2AB} \right) t + o(t) \right). \label{eq: 1-1.2} 
    \end{align}
    Hence,
    \begin{align} 
        \left| \frac{1}{1 - \langle f(z), f(w) \rangle} \right|^2 =
        \frac{1}{4t^2} \left( 1 + \left( \frac{\Re F}{A^2} + \frac{\Re G}{B^2} - \frac{\Re E}{AB} \right) t + o(t) \right). \label{eq: 1-1.3}   
    \end{align}
    Substituting \eqref{eq: 11.3}, \eqref{eq: -1-1.3} and \eqref{eq: 1-1.3} into \eqref{eq: det:LHS} we get
    \begin{align}
        \frac{1}{4t^2} \Biggl( \left(1 + \frac{C + 2 \Re F}{2A^2}t + o(t) \right) \left( 1 + \frac{D + 2 \Re G}{2B^2}t + o(t) \right) - \nonumber \\
        \left( 1 + \left( \frac{\Re F}{A^2} + \frac{\Re G}{B^2} - \frac{\Re E}{AB} \right) t + o(t) \right) \Biggr) = \nonumber \\  
        \frac{1}{4t} \left( \frac{C}{2A^2} + \frac{\Re F}{A^2} + \frac{D}{2B^2} + \frac{\Re G}{B^2} - \frac{\Re F}{A^2} - \frac{\Re G}{B^2} + \frac{\Re E}{AB} + o(1) \right) = \nonumber \\
        \frac{1}{8t} \left( \frac{C}{A^2} + \frac{D}{B^2} + \frac{2 \Re E}{AB} \right) + o \left( \frac 1 t \right). \label{eq: det:LHS.2}
    \end{align}
    Similarly, expanding only up to $o(1)$ in \eqref{eq: 11.3}, \eqref{eq: -1-1.3}, \eqref{eq: 1-1.2} and substituting into \eqref{eq: det:RHS} we get 
    \begin{align}
        \frac{1}{2t} |h(w)|^2 (1 + o(1)) + \frac{1}{2t} |h(z)|^2 (1 + o(1)) - \frac{1}{2t} 2 \Re (\overline{h(z)} h(w)) (1 + o(1)) = \nonumber \\
        \frac{1}{2t} \left( |h(z)|^2 + |h(w)|^2 - 2 \Re (\overline{h(z)} h(w)) \right) + o \left( \frac 1 t \right) = \nonumber \\
        \frac{1}{2t} |h(z) - h(w)|^2 + o \left( \frac 1 t \right). \label{eq: det:RHS.2}
    \end{align}
    Thus, replacing \eqref{eq: det:LHS} with \eqref{eq: det:LHS.2} and \eqref{eq: det:RHS} with \eqref{eq: det:RHS.2} we get
    \begin{equation*}
        \frac{1}{8t} \left( \frac{C}{A^2} + \frac{D}{B^2} + \frac{2 \Re E}{AB} \right) + o \left( \frac 1 t \right) \ge \frac{1}{2t} |h(z) - h(w)|^2 + o \left( \frac 1 t \right).  
    \end{equation*}
    This is equivalent to
    \begin{equation*}
        |h(z) - h(w)|^2 \le \frac{1}{4} \left( \frac{C}{A^2} + \frac{D}{B^2} + \frac{2 \Re E}{AB} \right) + o(1).    
    \end{equation*}
    Explicitly writing $z = 1 - t/A$ and $w = - 1 + t/B$ we get 
    \begin{equation} \label{eq: k - k}
         \left|h \left( 1 - \frac{t}{A} \right) - h \left( -1 + \frac{t}{B} \right) \right|^2 \le \frac{1}{4} \left( \frac{C}{A^2} + \frac{D}{B^2} + \frac{2 \Re E}{AB} \right) + o(1), \quad t > 0.
    \end{equation}
    In particular, taking the limit as $t \to 0$ for $h$ that have radial limits at $\pm 1$ with $|| h ||_{\H_f} \le 1$ we get
    \begin{equation*}
        \left|h (1) - h \left( -1  \right) \right|^2 \le \frac{1}{4} \left( \frac{C}{A^2} + \frac{D}{B^2} + \frac{2 \Re E}{AB} \right).
    \end{equation*}
    It follows by scaling that in general we have
    \begin{equation*}
        \left|h (1) - h \left( -1  \right) \right|^2 \le \frac{1}{4} \left( \frac{C}{A^2} + \frac{D}{B^2} + \frac{2 \Re E}{AB} \right) || h ||^2_{\H_f},
    \end{equation*}
    for all $h \in \H_f$ with radial limits at $\pm 1$. Note that such functions are dense in $\H_f$, since, for example, all $\k^f_z, \, z \in \D$, \eqref{eq: k^f formula}, are continuous in $\overline \D$. 

    We conclude that there is a bounded functional on $\H_f$, let us denote the respective function by $\f \in \H_f$, such that $\langle h, \f \rangle_{\H_f} = h(1) - h(-1)$ for $h \in \H_f$ that have the limits 
    \begin{equation*}
        h(1) = \lim_{x \to 0+} h(1 - x), \qquad h(-1) = \lim_{y \to 0+} h(-1 + y).
    \end{equation*}
    To finish the proof of the theorem it remains to show that $\f = 0$. Indeed, for any $z \in \D$
    \begin{equation*}
        \f (z) = \langle \f, \k^f_z \rangle_{\H_f} = \overline{\langle \k^f_z, \f \rangle_{\H_f}} = \overline{\k^f_z(1) - \k^f_z(-1)} = 0.
    \end{equation*}
    \end{proof}

    \begin{corollary} \label{crl: k - k w 0}
        In the setting of Theorem \ref{thrm: 1 = -1} we have
        \begin{equation*}
            \k^f_{1 - \frac t A} - \k^f_{-1 + \frac t B} \to 0, \quad t \to 0.
        \end{equation*}
        in the $w^*$ topology.
    \end{corollary}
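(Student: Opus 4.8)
The plan is to read the difference of kernels as the Riesz representative of the functional that already appeared in the proof of Theorem \ref{thrm: 1 = -1}, and then to upgrade the pointwise conclusion of that theorem to weak convergence by combining it with the uniform bound \eqref{eq: k - k}. Since $\H_f$ is a Hilbert space, its weak-$*$ topology coincides with its weak topology, so it suffices to prove that $g_t := \k^f_{1 - \frac t A} - \k^f_{-1 + \frac t B} \to 0$ weakly. The starting observation is the reproducing identity: for every $h \in \H_f$,
\begin{equation*}
    \langle h, g_t \rangle_{\H_f} = h\left( 1 - \frac{t}{A} \right) - h\left( -1 + \frac{t}{B} \right).
\end{equation*}

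First I would establish that the family $\{ g_t \}$ is norm-bounded for small $t$. By \eqref{eq: k - k}, for every $h$ with $||h||_{\H_f} \le 1$ and all sufficiently small $t > 0$ the quantity $|\langle h, g_t \rangle_{\H_f}|^2$ is bounded by a constant depending only on $A, B, C, D, E$. Taking the supremum over the unit ball of $\H_f$ yields $\sup_{0 < t \le t_0} ||g_t||_{\H_f} < \infty$. The one point that needs care is that the $o(1)$ error in \eqref{eq: k - k} is uniform over $||h|| \le 1$; this holds because the error terms descend from the ($h$-independent) boundary expansions of $f$ and because the crude bound $|h(z)|^2 \le \k^f(z,z) = O(1/t)$ absorbs the residual factors.

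Next I would verify convergence on a dense set. For any $w \in \D$ the kernel $\k^f_w$ extends continuously to $\overline{\D}$ by \eqref{eq: k^f formula}, hence has radial limits at $\pm 1$, and by Theorem \ref{thrm: 1 = -1} these limits agree; therefore $\langle \k^f_w, g_t \rangle_{\H_f} = \k^f_w(1 - \tfrac t A) - \k^f_w(-1 + \tfrac t B) \to \k^f_w(1) - \k^f_w(-1) = 0$. Since $\Span\{ \k^f_w : w \in \D \}$ is dense in $\H_f$, the conclusion follows from the standard ``bounded plus dense'' mechanism: given $h \in \H_f$ and $\e > 0$, choose a finite combination $h'$ of kernels with $||h - h'||_{\H_f} < \e$, so that $|\langle h, g_t \rangle_{\H_f}| \le \e \sup_t ||g_t||_{\H_f} + |\langle h', g_t \rangle_{\H_f}|$; the second term tends to $0$, whence $\limsup_t |\langle h, g_t \rangle_{\H_f}| \le \e \sup_t ||g_t||_{\H_f}$, and letting $\e \to 0$ gives $\langle h, g_t \rangle_{\H_f} \to 0$ for every $h$.

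The step I expect to be the real obstacle is the uniformity in the boundedness claim: \eqref{eq: k - k} was derived for each fixed $h$, and the whole argument collapses unless the $o(1)$ there can be taken uniform over the unit ball (otherwise $\{ g_t \}$ need not be norm-bounded and the density argument in the last step fails). Once that uniformity is secured, the remainder is routine.
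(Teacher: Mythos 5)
Your proof is correct and takes essentially the same route as the paper's (which is only two sentences long): norm-boundedness of $\k^f_{1-\frac tA}-\k^f_{-1+\frac tB}$ extracted from \eqref{eq: k - k}, convergence against the kernel functions $\k^f_w$ via Theorem \ref{thrm: 1 = -1}, and the standard bounded-plus-dense argument, using that weak-$*$ and weak convergence coincide on a Hilbert space. The uniformity of the error in \eqref{eq: k - k} over the unit ball, which you rightly flag, is left implicit in the paper, and your fix is correct: the $h$-dependent residual terms are controlled by $|h(z)|^2 \le \k^f(z,z) = O(1/t)$ times $h$-independent $O(t)$ relative errors, yielding a uniform $O(1)$ bound, which is all that norm-boundedness requires.
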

    \begin{proof}
        From \eqref{eq: k - k} it follows that 
        \begin{equation*}
            \k^f_{1 - \frac t A} - \k^f_{-1 + \frac t B}, \quad t \to 0.
        \end{equation*}
        is bounded in norm. Hence, since linear combinations of $\k^f_z, \ z \in \D$ are dense in $\H_f$, the conclusion follows from Theorem \ref{thrm: 1 = -1}. 
    \end{proof}

    An immediate corollary of Theorem \ref{thrm: 1 = -1} is the following
    \begin{theorem} \label{crl: xi = zet}
        Suppose $V$ is an analytic disc attached to the unit sphere, and $f$ is its embedding map. Let $f(\xi) = f(\z)$ for two distinct $\xi, \z \in \mathbb{T}$. If $h$ is a function in $\H_f$ that is continuous up to $\overline \D$, then $h(\xi) = h(\z)$.
    \end{theorem}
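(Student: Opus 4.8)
The plan is to reduce the general case to the special case $\xi = 1$, $\zeta = -1$ that was settled in Theorem \ref{thrm: 1 = -1}, by precomposing the embedding map $f$ with a disc automorphism that carries the ordered pair $(1, -1)$ to $(\xi, \zeta)$. First I would fix an automorphism $\mu$ of $\D$ with $\mu(1) = \xi$ and $\mu(-1) = \zeta$. Such a $\mu$ exists because the automorphism group of $\D$ acts transitively on ordered pairs of distinct points of $\TT$: choosing a third boundary point on the positively oriented arc between $1$ and $-1$ and a corresponding one between $\xi$ and $\zeta$, the unique orientation-preserving Möbius automorphism matching the two oriented triples sends $1 \mapsto \xi$ and $-1 \mapsto \zeta$.

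Next I would set $g = f \circ \mu$ and verify that $g$ is again an embedding map of the \emph{same} disc $V$. Indeed, $g$ is injective and analytic, $g'(z) = f'(\mu(z))\,\mu'(z) \ne 0$ on $\D$ since both factors are nonzero, $g$ extends to a $C^2$ map on $\overline{\D}$ as the composition of the $C^2$ map $f$ with the Möbius map $\mu$ (which is analytic on $\overline{\D}$), and since $\mu(\TT) = \TT$ we get $\|g(x)\| = 1 \iff |\mu(x)| = 1 \iff |x| = 1$. Moreover $g(\D) = f(\mu(\D)) = f(\D) = V$, so $g$ is an embedding map of $V$ in the sense of Definition \ref{def: analytic disc}, and crucially
\begin{equation*}
    g(1) = f(\mu(1)) = f(\xi) = f(\zeta) = f(\mu(-1)) = g(-1),
\end{equation*}
so $g$ satisfies the hypotheses of Theorem \ref{thrm: 1 = -1}.

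It then remains to transport $h$ to the space $\H_g$. Since $g^{-1} = \mu^{-1} \circ f^{-1}$, the definition of the pullback spaces shows that $h \mapsto h \circ \mu$ is an isometric isomorphism $\H_f \to \H_g$: for $h \in \H_f$ the function $\tilde h := h \circ \mu$ satisfies $\tilde h \circ g^{-1} = h \circ f^{-1} \in \H_V$, whence $\tilde h \in \H_g$ with $\|\tilde h\|_{\H_g} = \|h\|_{\H_f}$. If $h$ is continuous up to $\overline{\D}$, then so is $\tilde h = h \circ \mu$, because $\mu$ is continuous on $\overline{\D}$; in particular $\tilde h$ has radial limits at $\pm 1$. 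Applying Theorem \ref{thrm: 1 = -1} to $g$ and $\tilde h$ yields $\tilde h(1) = \tilde h(-1)$, that is, $h(\xi) = h(\mu(1)) = h(\mu(-1)) = h(\zeta)$, which is the desired conclusion. The only points requiring care are the existence of an orientation-respecting automorphism $\mu$ and the verification that $g$ remains a valid embedding map; both are routine, so this is genuinely an immediate corollary.
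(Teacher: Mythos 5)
Your proposal is correct and follows the paper's own argument exactly: precompose $f$ with a disc automorphism $\mu$ carrying $\{\pm 1\}$ to $\{\xi, \zeta\}$, note that $g = f \circ \mu$ and $h \circ \mu$ satisfy the hypotheses of Theorem \ref{thrm: 1 = -1}, and transport the conclusion back. The only difference is that you spell out the routine verifications (existence of an orientation-preserving $\mu$, and that $g$ is again an embedding map) that the paper leaves implicit.
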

    \begin{proof}
        Consider a disc automorphism $\mu$ such that $\mu(-1) = \xi, \, \mu(1) = \z$. Then $g = f \circ \mu$ satisfies the conditions of Theorem \ref{thrm: 1 = -1}. Note that $h \circ \mu$ belongs to $\H_g$ ($\H_f \cong \H_g$ as RKHS by $\circ \mu$). $h \circ \mu$ is continuous up to $\overline \D$ as well, so it satisfies the conditions of Theorem \ref{thrm: 1 = -1}. We conclude $(h \circ \mu)(-1) = (h \circ \mu) (1)$, which is exactly $h(\xi) = h(\z)$.
    \end{proof} 

    From this result we can infer that the same must hold for multipliers.
    \begin{theorem} \label{thrm: crossings to multipliers}
        Suppose $V$ is an analytic disc attached to the unit sphere, and $f$ is its embedding map. Let $f(\xi) = f(\z)$ for two distinct $\xi, \z \in \mathbb{T}$. If $\f$ is a function in $\M_f$ that is continuous up to $\overline \D$, then $\f(\xi) = \f(\z)$.
    \end{theorem}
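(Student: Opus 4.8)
The plan is to reduce the statement for multipliers to the already-established statement for Hilbert-space functions, Theorem \ref{crl: xi = zet}, by multiplying $\f$ against a conveniently chosen element of $\H_f$ and then dividing out. First I would recall that, by the very definition of the multiplier algebra, multiplication by $\f$ maps $\H_f$ into itself; in particular, for every fixed $w \in \D$ the product $\f \cdot \k^f_w$ belongs to $\H_f$, where $\k^f_w = \k^f(\cdot, w)$ denotes the reproducing kernel at $w$.

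Next I would verify that $\f \cdot \k^f_w$ is continuous up to $\overline \D$. The factor $\f$ is continuous up to $\overline \D$ by hypothesis, and $\k^f_w$ is as well: by \eqref{eq: k^f formula} it is the function $z \mapsto (1 - \langle f(z), f(w) \rangle)^{-1}$, and since $\| f(w) \| < 1$ while $\| f(z) \| \le 1$ on $\overline \D$, the Cauchy--Schwarz inequality gives $|\langle f(z), f(w) \rangle| \le \| f(w) \| < 1$, so the denominator stays bounded away from $0$; continuity then follows from the continuity of $f$ on $\overline \D$. Hence $\f \cdot \k^f_w \in \H_f$ is continuous up to $\overline \D$, so Theorem \ref{crl: xi = zet} applies to it and yields $(\f \k^f_w)(\xi) = (\f \k^f_w)(\z)$, that is, $\f(\xi)\,\k^f_w(\xi) = \f(\z)\,\k^f_w(\z)$.

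Finally I would observe that the kernel factor itself takes equal and nonzero values at $\xi$ and $\z$. Directly from \eqref{eq: k^f formula} and the hypothesis $f(\xi) = f(\z)$ we get $\k^f_w(\xi) = (1 - \langle f(\xi), f(w) \rangle)^{-1} = (1 - \langle f(\z), f(w) \rangle)^{-1} = \k^f_w(\z)$, and this common value, being a reciprocal, is nonzero. Dividing the identity $\f(\xi)\,\k^f_w(\xi) = \f(\z)\,\k^f_w(\z)$ by this nonzero quantity produces $\f(\xi) = \f(\z)$, as desired.

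I do not expect a serious obstacle here; the only point requiring care is to have at hand a function in $\H_f$ that is simultaneously continuous up to the boundary and nonvanishing at the crossing, so that the final cancellation is legitimate. The reproducing kernels $\k^f_w$ provide exactly such test functions, and a single choice of $w \in \D$ suffices.
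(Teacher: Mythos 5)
Your proposal is correct and takes essentially the same route as the paper: the paper's proof multiplies $\f$ by $h = \k^f_0$, applies the Hilbert-space result to conclude $\f(\xi)h(\xi) = \f(\z)h(\z)$, and divides by $h(\xi) = h(\z) \ne 0$, exactly as you do with the inessential difference that you allow an arbitrary $w \in \D$ instead of $w = 0$. Your explicit Cauchy--Schwarz verification that $\k^f_w$ is continuous up to $\overline{\D}$ is a detail the paper records earlier (in the proof of Theorem \ref{thrm: 1 = -1}) rather than in this proof.
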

    \begin{proof}
        Consider $h(z) = \k_{0}^f(z)$. By definition $h(\xi) = h(\z) \ne 0$. Note that $\f h \in \H_f$ is continuous up to $\overline{\D}$, so that $\f(\xi) h(\xi) = \f(\z) h(\z)$. Dividing by $h(\xi) = h(\z) \ne 0$ we get $\f(\xi) = \f(\z)$.
    \end{proof}

    \section{Proof of Theorem \ref{thrm: algebras to crossings}}
    \begin{theorem*}
            Suppose $V$, $W$ are analytic discs attached to the unit sphere, and $f$, $g$ are the respective embedding maps. If $\M_f = \M_g$, then
            \begin{equation*}
                f(\xi) = f(\zeta) \iff g(\xi) = g(\zeta), \quad \xi, \zeta \in \TT. 
            \end{equation*}
            This means that $f$ and $g$ have the same self-crossings on the boundary.        
    \end{theorem*}

    \begin{proofof}[thrm: algebras to crossings]
        The result follows from Theorem \ref{thrm: crossings to multipliers}. Let us write $f$ as $f = (f_1, \ldots, f_d)$. Note that $z_j \in \M_d, \ j = 1, \ldots, d$, so that if we compose with $f$ we get $f_j \in \M_f, \ j = 1, \ldots, d$. Since $\M_f = \M_g$, by Theorem \ref{thrm: crossings to multipliers}, $g(\xi) = g(\z) \implies f_j(\xi) = f_j(\z), \ j = 1, \ldots, d$, which means exactly $f(\xi) = f(\z)$. Exchanging $f$, $g$ and repeating this argument we get the desired equivalence.    
    \end{proofof}

    \section{Proof of Theorem \ref{thrm: isomorphic algebras to automorphism}}
    First, we show that analytic discs are irreducible.

    \begin{lemma} \label{lem: irreducible varieties}
        If $V$ is an analytic disc attached to the unit sphere, then $V$ is irreducible in the sense of \cite[Section 5.1]{Sal}, i.e., for any regular point $\l \in V$, the intersection of zero sets of all multipliers vanishing on a small neighborhood $V \cap B_{\e}(\l)$ is exactly $V$.
    \end{lemma}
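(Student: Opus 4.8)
The plan is to reduce the assertion to the identity theorem for holomorphic functions on $\D$. Since $f$ is injective and holomorphic with $f'(z) \ne 0$ on $\D$, the image $V = f(\D)$ is a smooth one-dimensional complex submanifold of $\B_d$, so every point of $V$ is regular and the hypothesis that $\l$ be a regular point imposes no restriction. Fix such a $\l$ and an $\e > 0$, and write $I = \{ \f \in \M_d : \f|_{V \cap B_{\e}(\l)} = 0 \}$. Denoting by $Z(\f)$ the zero set of $\f$ in $\B_d$, I must show $\bigcap_{\f \in I} Z(\f) = V$, and I would prove the two inclusions separately.

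The inclusion $\bigcap_{\f \in I} Z(\f) \subseteq V$ is the easy direction. Because $V$ is a variety, $V = Z(E)$ for some $E \subseteq \M_d$, and every $\f \in E$ vanishes on all of $V$, hence in particular on $V \cap B_{\e}(\l)$. Thus $E \subseteq I$, and intersecting over the larger family $I$ can only shrink the zero set, giving $\bigcap_{\f \in I} Z(\f) \subseteq \bigcap_{\f \in E} Z(\f) = V$.

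For the reverse inclusion it suffices to prove that every $\f \in I$ vanishes on all of $V$. Since $\f$ is a multiplier it is holomorphic on $\B_d$, so $\f \circ f$ is holomorphic on $\D$. The set $f^{-1}(B_{\e}(\l))$ is open by continuity of $f$ and nonempty since it contains a preimage of $\l$, and it coincides with $f^{-1}(V \cap B_{\e}(\l))$; on this open subset of $\D$ the function $\f \circ f$ is identically zero, so by the identity theorem $\f \circ f \equiv 0$ on all of $\D$ and hence $\f$ vanishes on $V = f(\D)$. This yields $V \subseteq \bigcap_{\f \in I} Z(\f)$ and thus equality. The argument is almost entirely formal; the one point I would check carefully is that $f^{-1}(V \cap B_{\e}(\l)) = f^{-1}(B_{\e}(\l))$ is a genuine nonempty open subset of $\D$ so that the identity theorem truly applies, which follows from continuity of $f$ alone. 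The conceptual heart is that a multiplier vanishing on an arc of $V$ restricts, via the holomorphic parametrization $f$, to a holomorphic function on the connected set $\D$ vanishing on an open subset, reducing irreducibility of $V$ to connectedness of $\D$.
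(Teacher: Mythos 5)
Your proof is correct and is essentially the paper's own argument: both hinge on pulling a multiplier $\f$ that vanishes on $V \cap B_{\e}(\l)$ back to the holomorphic function $\f \circ f$ on $\D$, observing that $f^{-1}(B_{\e}(\l))$ is a nonempty open subset of $\D$, and invoking the identity theorem. The only difference is presentational: the paper argues by contradiction and leaves the easy inclusion $\bigcap_{\f \in I} Z(\f) \subseteq V$ implicit, whereas you prove both inclusions directly.
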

    \begin{proof}
        Suppose $V$ is not irreducible. This means that there is $\l \in V$ and $\e > 0$ such that the intersection of zero sets of all multipliers vanishing on $V \cap B_{\e}(\l)$ is a proper subset of $V$. Hence, there is $v \in V$ and $\f \in \M_d$ such that $\f|_{V \cap B_{\e}(\l)} = 0$ but $\f(v) \ne 0$. Let $f$ be the embedding map of $V$. Then, $v = f(a)$ for some $a \in \D$. Consider a function $\psi = \f \circ f$, it is analytic in $\D$. Since $\f|_{V \cap B_{\e}(\l)} = 0$, then $\psi|_{f^{-1}(B_{\e}(\l))} = 0$. But the set $f^{-1}(B_{\e}(\l))$ is open in $\D$, which means that $\psi = 0$. This contradicts the fact that $\psi(a) = \f(v) \ne 0$.
    \end{proof}
    
    Now, we can prove Theorem \ref{thrm: isomorphic algebras to automorphism}. 
    \begin{theorem*}
        Suppose $V$, $W$ are analytic discs attached to the unit sphere, and $f$, $g$ are the respective embedding maps. If $\M_f \cong \M_g$ algebraically, then there exists $\mu$, an automorphism of the unit disc $\D$, such that $\M_f = \M_{g \circ \mu}$ with equivalent norms.
    \end{theorem*}
    \begin{proofof}[thrm: isomorphic algebras to automorphism]
        Suppose that $\M_f \cong \M_g$ algebraically. 
        This is equivalent to $\M_V \cong \M_W$. Denote by $\Phi: \M_V \to \M_W$ the isomorphism.
        By Lemma \ref{lem: irreducible varieties}, the varieties $V$ and $W$ are irreducible. 
        Thus, by \cite[Theorem 5.5]{Sal}, there are maps $F, G: \B_d \to \B_d$ with multiplier coefficients, i.e., $F_j, G_j \in \M_d, \ j = 1, \ldots, d$, such that $F \circ G = \id_V$, $G \circ F = \id_W$ and the composition with these maps gives us $\Phi$, i.e.,
        \begin{equation*}
            \Phi(\f) = \f \circ F, \ \f \in \M_V, \qquad \Phi^{-1}(\psi) = \psi \circ G, \ \psi \in \M_W. 
        \end{equation*}
        To go back to the unit disc we define
        \begin{equation*}
            \mu = g^{-1} \circ G \circ f : \D \to \D,
        \end{equation*}
        so that
        \begin{equation*}
            \mu^{-1} = f^{-1} \circ F \circ g: \D \to \D.
        \end{equation*}
        Since $f^{-1}, g^{-1}$ are analytic on $V$, $W$ respectively, we see that $\mu$ is an analytic bijection from $\D$ onto itself, and, hence, a disc automorphism.
        
        By definition,
        \begin{equation*}
            \f \in \M_{g \circ \mu} \iff \f \circ \mu^{-1} \circ g^{-1} \in \M_W \iff \f \circ \mu^{-1} \circ g^{-1} \circ G \in \M_V \iff
        \end{equation*}
        \begin{equation*}
            \f \circ \mu^{-1} \circ g^{-1} \circ G \circ f \in \M_f \iff \f \in \M_f.
        \end{equation*}
        Since $\Phi$ is continuous as a homomorphism of semi-simple Banach algebras and the norms on $\M_f, \M_g$ are inherited from $\M_V, \M_W$ we conclude that $\M_f = \M_{g \circ \mu}$ with equivalent norms.    
    \end{proofof}

    \section{A semi-invariant for analytic discs}
        Define $A_f(\xi) = \langle f(\xi), f'(\xi) \xi \rangle, \ \xi \in \TT$. From \cite[Corollary 3.2]{Dav} we know that $A_f(\xi) > 0$ for embedding maps $f$. This function on the circle turns out to be an important semi-invariant for $\M_f$. First, we show how it changes under disc automorphisms.
        \begin{lemma} \label{lem: A_f under mu}
            Suppose $f$ is an embedding map, $\mu$ is a disc automorphism
            \begin{equation*}
                \mu(z) = \l \frac{\a - z}{1 - \bar \a z}, \quad \l \in \TT, \, \a \in \D.
            \end{equation*}
            Then
            \begin{equation*}
                A_{f \circ \mu}(\xi) = A_f (\mu(\xi)) \frac{1 - |\a|^2}{|\a - \xi|^2}. 
            \end{equation*}
        \end{lemma}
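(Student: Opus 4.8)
The plan is to compute $A_{f \circ \mu}(\xi)$ directly from the definition, reduce it to $A_f(\mu(\xi))$ multiplied by a scalar, and then verify that this scalar equals the claimed factor $\frac{1 - |\a|^2}{|\a - \xi|^2}$. Throughout I use that $\mu$ maps $\TT$ onto $\TT$, so that $\mu(\xi) \in \TT$ and $A_f(\mu(\xi))$ is well defined, and that $f \circ \mu$ is again an embedding map, so the left-hand side makes sense.

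First I would apply the chain rule $(f \circ \mu)'(\xi) = f'(\mu(\xi)) \mu'(\xi)$ to write
\[
A_{f \circ \mu}(\xi) = \langle f(\mu(\xi)), f'(\mu(\xi)) \mu'(\xi) \xi \rangle.
\]
Since the inner product on $\C^d$ is conjugate-linear in its second argument and $\mu'(\xi)\xi$ is a scalar, I pull it out to obtain $\overline{\mu'(\xi) \xi} \, \langle f(\mu(\xi)), f'(\mu(\xi)) \rangle$. Writing $\eta = \mu(\xi) \in \TT$, the definition of $A_f$ gives $A_f(\eta) = \overline{\eta} \, \langle f(\eta), f'(\eta) \rangle$, i.e. $\langle f(\eta), f'(\eta) \rangle = \eta \, A_f(\eta)$. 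Substituting $\eta = \mu(\xi)$ yields
\[
A_{f \circ \mu}(\xi) = \overline{\mu'(\xi)} \, \overline{\xi} \, \mu(\xi) \, A_f(\mu(\xi)).
\]

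It then remains to verify the scalar identity $\overline{\mu'(\xi)} \, \overline{\xi} \, \mu(\xi) = \frac{1 - |\a|^2}{|\a - \xi|^2}$ for $\xi \in \TT$. Here I would compute $\mu'(z) = -\l \frac{1 - |\a|^2}{(1 - \bar\a z)^2}$, substitute $z = \xi$, and simplify using $|\xi| = 1$ repeatedly: the unimodular constant cancels since $\bar\l \l = 1$, the relation $1 - \a \bar\xi = -\bar\xi(\a - \xi)$ lets me cancel the factors $(\a - \xi)$ coming from $\mu(\xi)$ against those hidden in $(1 - \a\bar\xi)^2$, and finally $|1 - \bar\a \xi|^2 = |\a - \xi|^2$ produces the denominator. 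This last step is the only genuine computation, and the main thing to watch is the bookkeeping of complex conjugates — both the conjugate-linearity convention in the inner product and the conjugates introduced by the automorphism; the algebra itself is elementary once those are pinned down.
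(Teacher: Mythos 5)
Your proposal is correct and takes essentially the same route as the paper: chain rule, pulling the scalar $\mu'(\xi)\xi$ out of the conjugate-linear slot of the inner product, and then verifying the resulting scalar identity by computing $\mu'$ explicitly. The only cosmetic difference is that the paper writes the scalar as $\overline{\frac{\mu'(\xi)}{\mu(\xi)}\xi}$ while you write $\overline{\mu'(\xi)}\,\overline{\xi}\,\mu(\xi)$; these coincide because $|\mu(\xi)| = 1$, and your cancellations (in particular $1 - \a\bar\xi = -\bar\xi(\a - \xi)$ and $|1 - \bar\a\xi| = |\a - \xi|$ on $\TT$) all check out.
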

        \begin{proof}
            Expand
            \begin{align*}
                A_{f \circ \mu}(\xi) = \langle f(\mu(\xi)), (f \circ \mu)'(\xi) \xi \rangle = \langle f(\mu(\xi)), f'(\mu(\xi)) \mu'(\xi) \xi \rangle &= \\
                \langle f(\mu(\xi)), f'(\mu(\xi)) \mu(\xi) \frac{\mu'(\xi)}{\mu(\xi)} \xi \rangle &= \\
                \overline{\frac{\mu'(\xi)}{\mu(\xi)} \xi} A_f (\mu(\xi)).
            \end{align*}
            Hence, it is enough to show
            \begin{equation*}
                \frac{\mu'(\xi)}{\mu(\xi)} \xi = \frac{1 - |\a|^2}{|\a - \xi|^2}.  
            \end{equation*}
            Indeed,
            \begin{equation*}
                \mu'(z) = \l \frac{|\a|^2 - 1}{(1 - \bar \a z)^2},
            \end{equation*}
            so that
            \begin{equation*}
                \frac{\mu'(z)}{\mu(z)} = \frac{|\a|^2 - 1}{(\a - z)(1 - \bar \a z)}.
            \end{equation*}
            Thus,
            \begin{equation*}
                \frac{\mu'(\xi)}{\mu(\xi)} \xi = \frac{|\a|^2 - 1}{(\a - \xi)(1 - \bar \a \xi)} \xi = \frac{|\a|^2 - 1}{(\a - \xi)(\bar \xi - \bar \a)} = \frac{1 - |\a|^2}{|\a - \xi|^2}, 
            \end{equation*}
            which finishes the proof.
        \end{proof}
    
        \begin{theorem} \label{thrm: multipliers to bilip}
            Suppose $V$, $W$ are analytic discs attached to the unit sphere, and $f$, $g$ are the respective embedding maps. If $\M_f = \M_g$ with equivalent norms, then for $\xi, \z \in \TT$ such that $f(\xi) = f(\z)$ (and so Theorem \ref{thrm: algebras to crossings} implies $g(\xi) = g(\z)$) we have
            \begin{equation*}
                \frac{A_f(\xi)}{A_f(\z)} = \frac{A_g(\xi)}{A_g(\z)}.
            \end{equation*}
        \end{theorem}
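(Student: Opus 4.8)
The plan is to reduce to the normalized situation where the common self-crossing sits at $\pm 1$, and then to recover the ratio $A_f(\xi)/A_f(\z)$ from the boundary behaviour of the rescaling-invariant quantity
\begin{equation*}
    P^f(z,w) = \frac{|\k^f(z,w)|^2}{\k^f(z,z)\,\k^f(w,w)} = \frac{(1 - \|f(z)\|^2)(1 - \|f(w)\|^2)}{|1 - \langle f(z), f(w)\rangle|^2} \in [0,1].
\end{equation*}
First I would choose a disc automorphism $\mu$ with $\mu(1) = \xi$, $\mu(-1) = \z$ and replace $f, g$ by $f\circ\mu, g\circ\mu$. These have their crossing at $\pm1$, still satisfy $\M_{f\circ\mu} = \M_{g\circ\mu}$ with equivalent norms, and by Lemma \ref{lem: A_f under mu} the ratios $A_{f\circ\mu}(1)/A_{f\circ\mu}(-1)$ and $A_{g\circ\mu}(1)/A_{g\circ\mu}(-1)$ differ from $A_f(\xi)/A_f(\z)$ and $A_g(\xi)/A_g(\z)$ by the \emph{same} factor $|\a+1|^2/|\a-1|^2$ (the other automorphism data cancel). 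Hence it suffices to treat the case $\xi=1$, $\z=-1$ and show $A/B = A'/B'$, where $A = A_f(1)$, $B = A_f(-1)$, $A' = A_g(1)$, $B' = A_g(-1)$ are positive by \cite[Corollary 3.2]{Dav}.

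The key structural input is that $\M_f = \M_g$ with equivalent norms forces $\H_f$ and $\H_g$ to be rescalings of one another. They are irreducible complete Pick spaces (restrictions of $H^2_d$, and $\k^f$ never vanishes), so \cite[Corollary 3.2]{hartz2017isomorphism} gives a nowhere-vanishing $\de$ with $\k^g(z,w) = \de(z)\overline{\de(w)}\,\k^f(z,w)$. The factor $\de(z)\overline{\de(w)}$ cancels in $P$, so $P^f \equiv P^g$ on $\D\times\D$. Thus every boundary asymptotic of $P^f$ must be reproduced \emph{exactly} by $P^g$.

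Next I would compute the asymptotics. Writing $z = 1 - x$, $w = -1 + y$ with $x, y \to 0+$ and using \eqref{eq: 11.2}, \eqref{eq: -1-1.2}, \eqref{eq: 1-1}, the numerator of $P^f$ is $(2Ax + o(x))(2By + o(y))$ while $|1 - \langle f(1-x), f(-1+y)\rangle|^2 = (Ax + By)^2 + o((x+y)^2)$. Hence along any ray $x/y \to \text{const}$ the limit of $P^f$ is $4ABxy/(Ax+By)^2$ in that direction; by the arithmetic--geometric mean inequality this is $\le 1$, with equality iff $Ax \sim By$. So $P^f(1-x,-1+y)\to1$ precisely along the balanced rate $x:y = B:A$, which is the quantitative content underlying Corollary \ref{crl: k - k w 0}.

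Finally I would specialize to the ray $x = t/A$, $y = t/B$ with $t \to 0+$. Along it $P^f \to 1$, and since $P^f \equiv P^g$ we also get $P^g(1 - t/A, -1 + t/B) \to 1$. For $g$ the same computation yields the limit $4uv/(u+v)^2$ with $u = A'/A$ and $v = B'/B$, which equals $1$ only when $u = v$. Therefore $A'/A = B'/B$, that is $A/B = A'/B'$, and undoing the automorphism $\mu$ gives exactly $A_f(\xi)/A_f(\z) = A_g(\xi)/A_g(\z)$. The main obstacle is the second step: ensuring that coinciding multiplier algebras with merely equivalent norms produce an \emph{exact} rescaling, and hence the exact identity $P^f \equiv P^g$ rather than a two-sided estimate. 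Once that rigidity is secured, the remaining asymptotic analysis is elementary.
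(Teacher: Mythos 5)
Your reduction to $\xi = 1$, $\z = -1$ via Lemma \ref{lem: A_f under mu} and your asymptotic analysis along the ray $x = t/A$, $y = t/B$ reproduce the paper's computation exactly, but the transfer step from $f$ to $g$ has a genuine gap, and it is precisely the one you flag at the end. \cite[Corollary 3.2]{hartz2017isomorphism} yields the rescaling conclusion when the multiplier algebras coincide \emph{isometrically}; under the hypothesis here the norms are only \emph{equivalent}, and then the rescaling conclusion is actually false, not merely unproven. Concretely, take $f(z) = (\cos\t \, z, \sin\t \, z^2)$ with $\t \in (0, \pi/2)$, an injective analytic disc attached to the sphere (its image is the variety $z_2 = \tfrac{\sin\t}{\cos^2\t} z_1^2$ in $\B_2$). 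By Theorem \ref{thrmm: injective embedding}, $\M_f = H^{\infty}(\D) = \Mult(H^2(\D))$ with equivalent but not equal norms, yet
\begin{equation*}
    \frac{1 - z \bar w}{1 - \langle f(z), f(w) \rangle} = \frac{1 - z\bar w}{(1 - z \bar w)\left(1 + \sin^2\t \, z \bar w\right)} = \frac{1}{1 + \sin^2\t \, z \bar w}
\end{equation*}
does not factor as $\de(z)\overline{\de(w)}$ (put $w = 0$: $\de$ would have to be constant), so $\H_f$ is not a rescaling of $H^2(\D)$ and the identity $P^f \equiv P^{H^2}$ fails. Hence the exact equality $P^f \equiv P^g$ on which your argument rests is unavailable under the theorem's hypotheses.

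The repair is cheap, and it is what the paper does: your final step never needs $P^f \equiv P^g$, only the implication that $P^f \to 1$ along the chosen ray forces $P^g \to 1$. Since $\H_f$, $\H_g$ are complete Pick, the metric $d_f = \sqrt{1 - P^f}$ admits the extremal description
\begin{equation*}
    d_f(z, w) = \sup \left\{ |\f(z)| : \ \f \in \M_f, \ ||\f||_{\M_f} \le 1, \ \f(w) = 0 \right\},
\end{equation*}
(see \cite[Lemma 9.9]{AglBook}), and norm-equivalence of $\M_f = \M_g$ immediately gives the two-sided estimate $d_f \asymp d_g$, i.e., $1 - P^f \asymp 1 - P^g$. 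Then $d_f^2 = o(1)$ along $z = 1 - t/A$, $w = -1 + t/B$ yields $d_g^2 = o(1)$, and your own computation of the limit $4uv/(u+v)^2$ with $u = A'/A$, $v = B'/B$ forces $u = v$ as before. With that single replacement --- bi-Lipschitz equivalence of the induced metrics instead of exact rescaling --- your proof coincides with the paper's.
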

        Let us introduce a notation we use in the proof. For two quantities $A,B > 0$ depending on some parameters, we write $A \asymp B$ whenever there exists a constant $C > 0$ that is independent of the parameters such that $A \le CB$ and $B \le CA$.
        \begin{proof}
            In light of Lemma \ref{lem: A_f under mu}, it is enough to prove this theorem for $\xi = 1, \z = -1$.

            Consider a metric on $\D$ induced by $\H_f$, see \cite[Lemma 9.9]{AglBook}:
            \begin{equation*}
                d_f(z,w) = \sqrt{1 - \frac{|\k_f(z,w)|^2}{\k_f(z,z)\k_f(w,w)}}.
            \end{equation*}
            It is easy to see that since $\H_f$ is complete Pick, we have
            \begin{equation*}
                d_f(z,w) = \sup \left\{ |\f(z)|: \: \f \in \M_f, \, ||\f||_{\M_f} \le 1, \, \f(w) = 0 \right\}.
            \end{equation*}
            We have a similar metric $d_g$ for $g$ instead of $f$. Since $\M_f = \M_g$ with equivalent norms, we conclude that the metrics $d_f$ and $d_g$ are equivalent, meaning that the identity map between $(\D, d_f)$ to $(\D, d_g)$ is bi-Lipschitz. Hence, $d_f^2$ and $d_g^2$ are equivalent as well, i.e.,
            \begin{equation*}
                1 - \frac{(1 - ||f(z)||^2)(1 - ||f(w)||^2)}{|1 - \langle f(z), f(w) \rangle|^2} \asymp 1 - \frac{(1 - ||g(z)||^2)(1 - ||g(w)||^2)}{|1 - \langle g(z), g(w) \rangle|^2}, \quad z, w \in \D.
            \end{equation*}
            Similarly to Theorem \ref{thrm: 1 = -1} we consider $z = 1 - x, w = -1 + y$ for $x, y > 0$ with $x, y \to 0$. Repeating the insight from Theorem \ref{thrm: 1 = -1} we set
            \begin{equation*}
                x = \frac{t}{A_f(1)}, \qquad y = \frac{t}{A_f(-1)},
            \end{equation*}
            for $t > 0$, $t \to 0$. This way, we have
            \begin{equation*}
                1 - ||f(z)||^2 = 2t + o(t), \quad 1 - ||f(w)||^2 = 2t + o(t), \quad 1 - \langle f(z), f(w) \rangle = 2t + o(t). 
            \end{equation*}
            Thus,
            \begin{equation*}
                \frac{(1 - ||f(z)||^2)(1 - ||f(w)||^2)}{|1 - \langle f(z), f(w) \rangle|^2} = \frac{(2t + o(t))(2t + o(t))}{|2t + o(t)|^2} = 1 + o(1), \quad t \to 0.    
            \end{equation*}
            We conclude that 
            \begin{equation*}
                d^2_f \left( 1 - \frac{t}{A_f(1)}, -1 + \frac{t}{A_f(-1)} \right) = o(1), \quad t \to 0. 
            \end{equation*} 
            Next, we look at what happens to $d_g^2$. Set $a = \frac{A_g(1)}{A_f(1)}$, $b = \frac{A_g(-1)}{A_f(-1)}$. We need to prove that $a = b$. We expand
            \begin{equation*}
                1 - ||g(z)||^2 = 2 a t + o(t), \quad 1 - ||g(w)||^2 = 2 b t + o(t), \quad 1 - \langle g(z), g(w) \rangle = \left( a + b \right) t + o(t).
            \end{equation*}
            Thus,  
            \begin{equation*}
                \frac{(1 - ||g(z)||^2)((1 - ||g(w)||^2))}{|1 - \langle g(z), g(w) \rangle|^2} = \frac{4ab}{(a + b)^2} + o(1), 
            \end{equation*}
            and so
            \begin{equation*}
                d^2_g \left( 1 - \frac{t}{A_f(1)}, -1 + \frac{t}{A_f(-1)} \right) = 1 - \frac{4ab}{(a + b)^2} + o(1), \quad t \to 0.
            \end{equation*}
            Since $d_g^2 \asymp d_f^2 = o(1), \ t \to 0$, we must have
            \begin{equation*}
                \frac{4ab}{(a + b)^2} = 1.
            \end{equation*}
            It remains to notice that
            \begin{equation*}
                \frac{4ab}{(a + b)^2} = 1 \iff (a + b)^2 = 4ab \iff (a - b)^2 = 0 \iff a = b.
            \end{equation*}
            We conclude that $a = b$, which means $\frac{A_f(1)}{A_g(1)} = \frac{A_f(-1)}{A_g(-1)}$, as we wanted.
        \end{proof}

    \section{Proof of Theorem \ref{thrm: multiplier isomorphism two points to two auto}}
    \begin{theorem*}
        Suppose $V$, $W$ are analytic discs attached to the unit sphere, and $f$, $g$ are the respective embedding maps with the only self-crossing at $\pm 1$. Then $\M_f \cong \M_g$ algebraically if and only if $\M_f = \M_{g \circ \mu}$, where
            \begin{equation*}
                \mu(z) = \frac{z - \a}{1 - \a z}, \text{ or} \qquad \mu(z) = \frac{\b - z}{1 - \b z}
            \end{equation*}
        with constants $\a$ and $\b$ that can be explicitly calculated in terms of $f(\pm 1), f'(\pm 1)$ and $ g(\pm 1), g'(\pm 1)$.
    \end{theorem*}
    \begin{proofof}[thrm: multiplier isomorphism two points to two auto]
        The fact that $\M_f = \M_{g \circ \mu}$ implies $\M_f \cong \M_g$ algebraically is evident for any automorphism $\mu$. Thus, we prove the other implication.

        Suppose $\M_f \cong \M_g$ algebraically. By Theorem \ref{thrm: isomorphic algebras to automorphism}, it means that there is a disc automorphism $\mu$ such that $\M_f = \M_{g \circ \mu}$ with equivalent norms. We claim that there are only two possible choices for $\mu$ as in the statement of the theorem.

        First, if $\M_f = \M_{g \circ \mu}$, then, by Theorem \ref{thrm: algebras to crossings}, $f$ and $g \circ \mu$ must have the same self-crossings on the boundary. But the only self-crossings for $f$ and $g$ are $\pm 1$, which means that $\mu(1) = \pm 1$ and $\mu(-1) = \mp 1$. We classify such $\mu$.

        \begin{lemma} \label{lem: -1 to -1 and 1 to 1}
            An automorphism $\mu$ satisfies $\mu(1) = 1$, $\mu(-1) = -1$ if and only if there is $\a \in (-1,1)$ such that
            \begin{equation*}
                \mu(z) = \frac{z - \a}{1 - \a z}.
            \end{equation*}
        \end{lemma}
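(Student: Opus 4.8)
The plan is to prove both directions by a direct computation using the standard parametrization of the disc automorphisms, exactly the form $\mu(z) = \l \frac{\a - z}{1 - \bar\a z}$ with $\l \in \TT$ and $\a \in \D$ already used in Lemma \ref{lem: A_f under mu}.

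The forward (``if'') direction I would dispatch immediately. For real $\a \in (-1,1)$ one has $\bar\a = \a$, so $\frac{z - \a}{1 - \a z} = -\,\frac{\a - z}{1 - \bar\a z}$ is precisely the standard automorphism form with unimodular constant $\l = -1$; hence $\mu$ is a genuine automorphism of $\D$. Substituting the two boundary points gives $\mu(1) = \frac{1 - \a}{1 - \a} = 1$ and $\mu(-1) = \frac{-1 - \a}{1 + \a} = -1$, as required.

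For the converse I would begin with a general automorphism $\mu(z) = \l \frac{\a - z}{1 - \bar\a z}$ and impose the two conditions. Evaluating at $z = 1$ gives $\l = \frac{1 - \bar\a}{\a - 1}$, while evaluating at $z = -1$ gives $\l = -\frac{1 + \bar\a}{\a + 1}$. Equating these two expressions and clearing denominators collapses, after cancelling the common terms $1$ and $|\a|^2$, to the identity $\a - \bar\a = -\a + \bar\a$, i.e. $\a = \bar\a$. Thus $\a$ is real, and since $\a \in \D$ this means $\a \in (-1,1)$. Feeding $\a \in \R$ back into either expression for $\l$ forces $\l = \frac{1 - \a}{\a - 1} = -1$, so that $\mu(z) = -\frac{\a - z}{1 - \a z} = \frac{z - \a}{1 - \a z}$, which is exactly the claimed form.

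The argument is entirely routine; the only point demanding a little care is the bookkeeping of the unimodular constant $\l$. One must check that the two boundary conditions, which each determine $\l$ separately, are consistent, and that their common value is forced to be real of modulus one so that no residual rotation survives. Because the constraint $\a \in \D$ together with $\a = \bar\a$ automatically places $\a$ in the real interval $(-1,1)$, there are no boundary or degeneracy cases to isolate, and the lemma follows.
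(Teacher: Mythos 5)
Your proof is correct and follows essentially the same route as the paper: both start from the standard parametrization $\mu(z) = \l \frac{\a - z}{1 - \bar\a z}$ and impose the two boundary conditions, the only (immaterial) difference being that the paper subtracts the resulting linear equations to get $\l = -1$ first and then $\a = \bar\a$, whereas you equate the two expressions for $\l$ to get $\a = \bar\a$ first and then $\l = -1$. No gaps; the argument is complete.
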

        \begin{proof}
            The fact that $\mu$ of such form satisfies $\mu(1) = 1, \, \mu(-1) = -1$ is clear.

            Now suppose we know $\mu(1) = 1$ and $\mu(-1) = -1$. A general disc automorphism has the form
            \begin{equation*}
                \mu(z) = \l \frac{\a - z}{1 - \bar \a z}, \quad \l \in \TT, \a \in \D.
            \end{equation*}
            We have
            \begin{equation*}
                \l \frac{\a - 1}{1 - \bar \a} = 1, \qquad \l \frac{\a + 1}{1 + \bar \a} = -1.
            \end{equation*}
            It is equivalent to
            \begin{equation*}
                \l(\a - 1) = 1 - \bar \a, \qquad \l (\a + 1) = -1 - \bar \a.  
            \end{equation*}
            Subtracting one from another we get $\l = -1$ and substituting it back to the equations we get $\a = \bar \a$, meaning $\a \in (-1, 1)$. This finishes the proof.
        \end{proof}

        We go back to the proof of the theorem. Since $\mu(1) = \pm 1$ and $\mu(-1) = \mp 1$, either $\mu$ or $-\mu$ satisfies the requirements of Lemma \ref{lem: -1 to -1 and 1 to 1}. Hence, either
        \begin{equation*}
            \mu(z) = \frac{z - \a}{1 - \a z}, \quad \a \in (-1,1) 
        \end{equation*}
        or
        \begin{equation*}
            \mu(z) = \frac{\b - z}{1 - \b z}, \quad \b \in (-1,1).    
        \end{equation*}
        It remains to notice that $\a$ and $\b$ both have only one possible value. Indeed, if $\M_f = \M_{g \circ \mu}$, then in light of Theorem \ref{thrm: multipliers to bilip}
        \begin{equation} \label{eq: A/A = A/A}
            \frac{A_f(1)}{A_f(-1)} = \frac{A_{g \circ \mu}(1)}{A_{g \circ \mu}(-1)}.
        \end{equation}
        
        If we consider $\mu(z) = (z - \a)/(1 - \a z)$, then \eqref{eq: A/A = A/A} together with Lemma \ref{lem: A_f under mu} gives us
        \begin{equation*}
            \frac{A_f(1)}{A_f(-1)} = \frac{A_g(1)}{A_g(-1)} \left( \frac{\a + 1}{1 - \a} \right)^2,
        \end{equation*}
        so that
        \begin{equation} \label{eq: alpha}
            \a = \frac{\sqrt{A_f(1)A_g(-1)} - \sqrt{A_f(-1)A_g(1)}}{\sqrt{A_f(1)A_g(-1)} + \sqrt{A_f(-1)A_g(1)}}.
        \end{equation}
        
        Similarly, if we consider $\mu(z) = (\b - z)/(1 - \b z)$, then
        \begin{equation*}
            \frac{A_f(1)}{A_f(-1)} = \frac{A_g(-1)}{A_g(1)} \left( \frac{\b + 1}{1 - \b} \right)^2,            
        \end{equation*}
        so that
        \begin{equation} \label{eq: beta}
            \b = \frac{\sqrt{A_f(1)A_g(1)} - \sqrt{A_f(-1)A_g(-1)}}{\sqrt{A_f(1)A_g(1)} + \sqrt{A_f(-1)A_g(-1)}}.
        \end{equation}    
    \end{proofof}    

    \section{Proof of Theorem \ref{thrm: f_r and f_s}}
    \begin{theorem*}
        $\M_{f_r} \ne \M_{f_s}$ for $r \ne s$.
    \end{theorem*}
    \begin{proofof}[thrm: f_r and f_s]
        In light of Theorem \ref{thrm: multipliers to bilip} it is sufficient to prove that for $r > s > 0$
            \begin{equation*}
                \frac{A_{f_r}(1)}{A_{f_r}(-1)} \ne \frac{A_{f_s}(1)}{A_{f_s}(-1)}.
            \end{equation*}
            Evaluating at $\pm 1$ we get $f_r(1) = f_r(-1) = \frac{1}{\sqrt{2}}(1,1)$ and
            \begin{equation*}
                f_r'(1) = \frac{1}{\sqrt{2}} \left( 2, \, 2\frac{1 + r}{1 - r} \right), \qquad f_r'(-1) = -\frac{1}{\sqrt{2}} \left( 2, \, 2\frac{1 - r}{1 + r} \right).
            \end{equation*}
            Hence,
            \begin{equation*}
                \frac{A_{f_r}(1)}{A_{f_r}(-1)} = \frac{1 + \frac{1 + r}{1 - r}}{1 + \frac{1 - r}{1 + r}} = \frac{1 + r}{1 - r}
            \end{equation*}
            is strictly increasing for $r \in (0, 1)$, which finishes the proof.
    \end{proofof}
            
    \section{Proof of Theorem \ref{thrm: f_r-r and f_s-s}}
    \begin{theorem*}
        $\M_{f_{r, -r}} \cong \M_{f_{s, -s}}$ algebraically if and only if  $\M_{f_{r, -r}} = \M_{f_{s, -s}}$.
    \end{theorem*}  
    \begin{proofof}
        Note that 
            \begin{equation} \label{eq: f_r z to -z}
                f_{r, -r}(-z) = f_{-r, r}(z) = \frac{1}{\sqrt{2}}(b_{-r}(z)^2, b_r(z)^2),    
            \end{equation}
            so that if $f_{r, -r}'(1) = (z_1, z_2)$, then $f'_{r, -r}(-1) = -(z_2, z_1)$. 
            Since
            \begin{equation*}
                f_{r, -r}(1) = f_{r, -r}(-1) = \frac{1}{\sqrt{2}}(1, 1),    
            \end{equation*} 
            we get 
            \begin{equation} \label{eq: A(1) = A(-1)}
                A_{f_{r, -r}}(1) = A_{f_{r, -r}}(-1).     
            \end{equation}
        
            Let us apply Theorem \ref{thrm: multiplier isomorphism two points to two auto} to $f_{r, -r}$ and $f_{s, -s}$. Calculating $\a$, $\b$ from \eqref{eq: alpha}, \eqref{eq: beta} and using \eqref{eq: A(1) = A(-1)} we get $\a = \b = 0$. Thus, in light of \eqref{eq: f_r z to -z}, Theorem \ref{thrm: multiplier isomorphism two points to two auto} implies that $\M_{f_{r, -r}} \cong \M_{f_{s, -s}}$ algebraically if and only if either $\M_{f_{r, -r}} = \M_{f_{s, -s}}$ or $\M_{f_{r, -r}} = \M_{f_{-s, s}}$. 
            
            Note that because of \eqref{eq: f_r z to -z}
            \begin{equation*}
                \langle f_{r, -r}(z), f_{r, -r}(w) \rangle = \langle f_{-r, r}(z), f_{-r, r}(w) \rangle,
            \end{equation*}
            whence,
            \begin{equation*}
                \k^{f_{r, -r}} = \k^{f_{-r, r}},
            \end{equation*}
            meaning $\H_{f_{r, -r}} = \H_{f_{-r, r}}$ with the same inner product, which means $\M_{f_{r, -r}} = \M_{f_{-r, r}}$ isometrically.

            We conclude that $\M_{f_{r, -r}} \cong \M_{f_{s, -s}}$ algebraically if and only if $\M_{f_{r, -r}} = \M_{f_{s, -s}}$.    
    \end{proofof}
    
\chapter{Embedding dimension}
    \section{Preliminary results}  
        In this section we study properties of the rotation-invariant spaces of analytic functions on the unit disc.

        We start with a well-known result about kernel positivity, see \cite[Lemma 20]{mccarthy2017spaces}. 
        \begin{lemma} \label{lem: k positive c positive}
            Suppose $\k: \D \times \D \to \C$ is given by
            \begin{equation*}
                \k(z, w) = \sum_{n = 0}^{\infty} c_n (z \bar w)^n,
            \end{equation*}
            $c_n \in \C$ are such that $\k$ is well-defined, i.e., $\limsup_{n \to \infty} |c_n|^{\frac 1 n} \le 1$.
            Then $\k$ is positive semi-definite if and only if $c_n \ge 0, \ n \ge 0$.
        \end{lemma}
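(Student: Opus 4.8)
The plan is to treat the two implications separately, since one is routine and all the real work lies in the converse. For the forward implication, suppose every $c_n \ge 0$. Each summand $(z\bar w)^n = z^n\overline{w^n}$ is a rank-one positive semidefinite kernel on $\D$, and the hypothesis $\limsup_{n}|c_n|^{1/n}\le 1$ forces $\sum_n c_n \rho^{2n} < \infty$ for every $\rho \in (0,1)$, so $\k = \sum_n c_n (z\bar w)^n$ is a convergent sum of positive semidefinite kernels with nonnegative coefficients and is therefore positive semidefinite.

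For the converse I would recover the coefficients $c_n$ by testing positivity of $\k$ on roots of unity. First observe that $\sum_n |c_n|\rho^{2n} < \infty$ for every $\rho \in (0,1)$, because $\limsup_n (|c_n|\rho^{2n})^{1/n} \le \rho^2 < 1$, so all the rearrangements below converge absolutely. Fix such a $\rho$ and an integer $N$, set $\omega = e^{2\pi i / N}$, and take the points $z_j = \rho \omega^j$, $j = 0, \dots, N-1$. The Gram matrix has entries $\k(z_j, z_k) = \sum_m c_m \rho^{2m}\omega^{m(j-k)}$. Evaluating the positivity hypothesis on the discrete-Fourier vector $a = (\omega^{nj})_{j=0}^{N-1}$ and using the character-sum identity $\sum_{j=0}^{N-1}\omega^{(n-m)j} = N$ when $N \mid (n-m)$ and $0$ otherwise, every term with $m \not\equiv n \pmod N$ cancels, leaving
\[
0 \le \sum_{j,k} \bar a_j\, a_k\, \k(z_j, z_k) = N^2 \sum_{l \ge 0} c_{n + lN}\, \rho^{2(n+lN)}.
\]
Hence $\sum_{l \ge 0} c_{n+lN}\rho^{2(n+lN)} \ge 0$ for each $n \in \{0, \dots, N-1\}$; in particular this quantity is real, being the value of a quadratic form of a Hermitian positive semidefinite matrix.

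To finish I would fix $n$, take $N > n$, and let $N \to \infty$ with $\rho$ held fixed. The $l = 0$ term is exactly $c_n \rho^{2n}$, while the tail $\sum_{l \ge 1} c_{n+lN}\rho^{2(n+lN)}$ is dominated in modulus by $\sum_{k \ge N} |c_k|\rho^{2k}$, the tail of a convergent series, so it tends to $0$. Passing to the limit yields $c_n \rho^{2n} \ge 0$, and since $\rho^{2n} > 0$ this gives $c_n \ge 0$ (and real, as a limit of real quantities). The only delicate points are the character-sum computation isolating the residue class $n \bmod N$ and the uniform tail estimate that justifies the limit; both are elementary once the set-up is fixed, so I expect no serious obstacle beyond careful bookkeeping.
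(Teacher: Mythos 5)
Your proof is correct, but it takes a genuinely different route from the paper. The paper's converse is structural: it invokes the factorization theorem for positive semidefinite kernels \cite[Theorem 2.53]{AglBook} to write $\k(z,w) = \langle g(z), g(w)\rangle_{\H}$ for a Hilbert-space-valued $g$, argues that $g$ is analytic (weak analyticity suffices, since $\k$ is analytic in $z$ and the vectors $g(w)$ span $\H$), expands $g(z) = \sum_n g_n z^n$, and reads off $c_n = \|g_n\|_{\H}^2 \ge 0$. You instead test positivity directly on the scaled roots of unity $z_j = \rho\,\omega^j$ against the DFT vectors $(\omega^{nj})_j$; the character-sum identity correctly isolates the arithmetic progression
\begin{equation*}
    \sum_{l \ge 0} c_{n+lN}\,\rho^{2(n+lN)} \ge 0,
\end{equation*}
and your tail estimate via $\sum_{k \ge N}|c_k|\rho^{2k} \to 0$ legitimately extracts $c_n\rho^{2n} \ge 0$ in the limit $N \to \infty$ (note the indices $n+lN$ with $l \ge 1$ all exceed $N$, so the domination is valid, and nonnegativity of each approximant forces the limit $c_n$ to be a nonnegative real even though $c_n$ is a priori complex). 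Your forward direction, summing rank-one kernels, matches the paper's one-line observation. What each approach buys: the paper's argument is shorter modulo the cited theorem and additionally exhibits the $c_n$ as squared norms, which fits the RKHS framework used throughout; yours is fully self-contained and elementary, avoiding both the factorization theorem and the weak-to-strong analyticity step --- it is in effect a discretization of extracting the Fourier coefficients of $\theta \mapsto \k(\rho e^{i\theta}, \rho)$. One cosmetic remark: your parenthetical justification that the quadratic form is real via Hermitian symmetry of the Gram matrix is unnecessary --- realness is already built into the positivity hypothesis $\sum_{j,k}\bar a_j a_k \k(z_j,z_k) \ge 0$ (and Hermitian symmetry of $\k$ is a consequence of it, not an assumption here, since the $c_n$ are not assumed real).
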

        For the sake of convenience, we present a short proof here. 
        \begin{proof}
            If $c_n \ge 0, \ n \ge 0$, then $\k$ is clearly positive semi-definite. We now prove the converse. By \cite[Theorem 2.53]{AglBook} there is a Hilbert space $\H$ and a function $g: \D \to \H$ such that
            \begin{equation*}
                \k(z, w) = \langle g(z), g(w) \rangle_{\H}.
            \end{equation*}
            Since $\k$ is analytic in $z$ and the image $g(w), \, w \in \D$ spans $\H$, $g$ is analytic. Hence, 
            \begin{equation*}
                g(z) = \sum_{n \ge 0} g_n z^n, \quad g_n \in \H,
            \end{equation*}
            so that
            \begin{equation*}
                \k(z, w) = \sum_{n, m \ge 0} \langle g_n, g_m \rangle_{\H} z^n \bar w^m.
            \end{equation*}
            In particular, $c_n = \langle g_n, g_n \rangle_{\H} = || g_n ||^2_{\H} \ge 0$.
        \end{proof}
        Now we can provide a complete description of rotation-invariant spaces. 
        \begin{theorem} \label{thrm: rotation inv to k}
            Let $\H$ be an RKHS of analytic functions on $\D$ with kernel $\k$. Then $\H$ is rotation-invariant if and only if the kernel $\k$ is given by
            \begin{equation} \label{eq: k = sum}
                \k(z, w) = \sum_{n \ge 0} c_n (z \bar w)^n, \quad c_n \ge 0.
            \end{equation}
            Moreover, if any (and, therefore, both) of these conditions are true, then for all $c_n \ne 0$ we have $z^n \in \H$, $|| z^n ||^2 = \frac{1}{c_n}$, and these monomials form an orthogonal basis in $\H$.
        \end{theorem}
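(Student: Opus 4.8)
The plan is to pass between the rotation-invariance of the space and the symmetry $\k(\xi z, \xi w) = \k(z,w)$ of its kernel, and then to identify $\H$ concretely with a weighted $\ell^2$-space of power series, which delivers the monomial statements and the reverse implication simultaneously.

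For the forward direction, suppose $\H$ is rotation-invariant, so that for $\xi \in \TT$ the map $U_\xi f(z) = f(\xi z)$ is a unitary on $\H$. Computing its action on kernel functions gives $U_\xi^* \k_w = \k_{\xi w}$, and unitarity of $U_\xi^*$ yields $\langle \k_{\xi w}, \k_{\xi v}\rangle = \langle \k_w, \k_v\rangle$, i.e. $\k(\xi v, \xi w) = \k(v, w)$ for all $v, w \in \D$ and $\xi \in \TT$. Since $\k_w \in \H$ is holomorphic, I expand $\k(z,w) = \sum_{n \ge 0} b_n(w) z^n$, locally uniformly in $z$; because $\k(z,w) = \overline{\k(w,z)}$ with $\k(\cdot,z)$ holomorphic, each coefficient $b_n(w) = \tfrac{1}{n!}\partial_z^n \k(z,w)|_{z=0}$ is anti-holomorphic in $w$. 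Rewriting the symmetry as $\k(\xi z, w) = \k(z, \bar\xi w)$ and comparing Taylor coefficients forces $b_n(\bar\xi w) = \xi^n b_n(w)$; writing $b_n = \overline{h_n}$ with $h_n$ holomorphic, this becomes $h_n(\eta w) = \eta^n h_n(w)$ for all $\eta \in \TT$, whose only holomorphic solutions are $h_n(w) = \gamma_n w^n$. Hence $b_n(w) = c_n \bar w^n$ and $\k(z,w) = \sum_n c_n (z\bar w)^n$. As $\k$ is a reproducing kernel it is positive semi-definite, so Lemma \ref{lem: k positive c positive} gives $c_n \ge 0$.

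For the remaining implication and the ``moreover'' clause, I fix any kernel $\k(z,w) = \sum_n c_n (z\bar w)^n$ with $c_n \ge 0$ (the series converges on $\D$, equivalently $\limsup_n c_n^{1/n} \le 1$), and introduce the weighted space
\begin{equation*}
    W = \left\{ f = \sum_n a_n z^n : a_n = 0 \text{ whenever } c_n = 0, \ \|f\|_W^2 := \sum_{c_n \ne 0} \frac{|a_n|^2}{c_n} < \infty \right\},
\end{equation*}
which is a Hilbert space isometric to a weighted $\ell^2$, and whose elements are holomorphic on $\D$ since $|a_n| \le \|f\|_W \sqrt{c_n}$ forces radius of convergence at least $1$. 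A direct computation shows $\k_w \in W$ with $\|\k_w\|_W^2 = \k(w,w)$ and $\langle f, \k_w\rangle_W = f(w)$, so $W$ is an RKHS with kernel $\k$; by uniqueness of the RKHS determined by a kernel, $\H = W$. Reading off from $W$: for $c_n \ne 0$ the monomial $z^n$ lies in $\H$ with $\|z^n\|^2 = 1/c_n$, distinct monomials are orthogonal, and $\{ z^n : c_n \ne 0 \}$ is an orthogonal basis. Since the norm on $W$ is manifestly invariant under $a_n \mapsto \xi^n a_n$, i.e. under $f(z) \mapsto f(\xi z)$, the space $\H = W$ is rotation-invariant, completing the equivalence.

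The main obstacle is the forward direction's kernel expansion: one must justify that an RKHS of holomorphic functions on $\D$ has a kernel holomorphic in $z$ and anti-holomorphic in $w$, and that invariance of the inner product under $U_\xi$ is faithfully recorded by the pointwise symmetry $\k(\xi z, \xi w) = \k(z,w)$. The homogeneity relation $b_n(\bar\xi w) = \xi^n b_n(w)$ pins down each coefficient only after anti-holomorphicity is in hand; by contrast, the weighted-space identification giving the reverse implication and the monomial basis is routine once the kernel form is known.
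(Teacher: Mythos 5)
Your proof is correct, but it takes a genuinely different route from the paper's at both main steps, so it is worth comparing. For the diagonalization of the kernel, the paper expands $\k$ as a double series $\sum_{n,m} c_{nm} z^n \bar w^m$, justifying joint analyticity of $\k(z,\bar w)$ by citing Hartogs's theorem, and then kills the off-diagonal terms via $c_{nm} = \xi^{n-m} c_{nm}$; you expand only in $z$ and use the homogeneity relation $b_n(\bar\xi w) = \xi^n b_n(w)$ to force $b_n(w) = c_n \bar w^n$. The step you rightly flag as the delicate one --- anti-holomorphicity of the coefficients $b_n$ --- is precisely what the paper's Hartogs citation dispatches; in your setup the cleanest repair is to observe that the functionals $f \mapsto f^{(n)}(0)/n!$ are bounded on $\H$ (Banach--Steinhaus gives $\sup_{|w| \le r} \|\k_w\| < \infty$ since every $f \in \H$ is locally bounded, and then Cauchy estimates apply), so $b_n(w) = \langle \k_w, e_n \rangle_{\H} = \overline{e_n(w)}$ for some $e_n \in \H$, which is anti-holomorphic as claimed; with that supplied, your forward direction is complete. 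For the converse and the ``moreover'' clause the approaches diverge more substantially: the paper computes $\|z^n\|^2 = 1/c_n$ by a two-sided application of the membership criterion ($\|f\|_{\H} \le L$ iff $\k(z,w) - f(z)\overline{f(w)}/L^2$ is positive semi-definite) together with Lemma \ref{lem: k positive c positive}, gets orthogonality from rotation-invariance of the inner product, and proves density by noting each $\k_z$ lies in the closed span of the monomials; you instead build the weighted $\ell^2$ model $W$, verify it is an RKHS with kernel $\k$, and invoke uniqueness of the RKHS of a given kernel (Moore--Aronszajn) to conclude $\H = W$ with equal norms, from which the monomial norms, orthogonality, the basis property, and the reverse implication (rotation-invariance of $\H$) all fall out at once. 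Your identification is more economical and handles the reverse direction without the kernel-invariance criterion the paper uses implicitly; the paper's version, in exchange, keeps every membership test at the level of kernel positivity, the technique it reuses throughout that chapter.
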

        The first part of the theorem is the special case of \cite[Lemma 2.2]{hartz2017isomorphism} with the dimension being equal to $1$.
        \begin{proof}
            Note that $\H$ is rotation-invariant if and only if for any $\xi \in \TT$ we have 
            \begin{equation*}
                \k(\xi z, \xi w) = \k(z, w), \quad z,w \in \D.     
            \end{equation*}
            Hence, the sufficiency of \eqref{eq: k = sum} is clear. We now prove necessity.
            
            Since $\H$ is a space of analytic functions, $\k$ is analytic in the variables $z$ and $\bar w$ separately. Hence, by the Hartogs's theorem, $\k(z, \bar w)$ is analytic in $\D^2$. In particular,
            \begin{equation*}
                \k(z, w) = \sum_{n, m \ge 0} c_{nm} z^n \bar w^m,
            \end{equation*}
            for $z, w \in r \D$ with small enough $r > 0$. Since $\H$ is rotation-invariant, $\k(\xi z, \xi w) = \k(z, w)$, $\xi \in \TT$, so that
            \begin{equation*}
                \sum_{n, m \ge 0} c_{nm} z^n \bar w^m = \sum_{n, m \ge 0} c_{nm} \xi^{n - m} z^n \bar w^m, \quad z, w \in r \D.    
            \end{equation*}
            This implies $c_{nm} = c_{nm} \xi^{n - m}, \ n,m \ge 0$ for any $\xi \in \TT$. Hence, $c_{nm} = 0$ if $n \ne m$. Thus, setting $c_n = c_{nn}$, we get
            \begin{equation*}
                \k(z, w) = \sum_{n \ge 0} c_n (z \bar w)^n, \quad z,w \in r \D.
            \end{equation*}
            Since for a fixed $w \in \D$ the function $\k(z, w)$ is analytic in $\D$, this series extends to $z \in \D$. We conclude that
            \begin{equation*}
                \k(z, w) = \sum_{n \ge 0} c_n (z \bar w)^n, \quad z,w \in \D.
            \end{equation*}
            It remains to notice that since $\k$ is positive semi-definite, then, by Lemma \ref{lem: k positive c positive}, $c_n \ge 0, \ n \ge 0$. 
    
            By \cite[Theorem 3.11]{paulsen2016introduction}, a function $f: \D \to \C$ belongs to an RKHS $\H$ with $|| f ||_{\H} \le L$ if any only if
            \begin{equation*}
                \k(z,w) - \frac{f(z) \overline{f(w)}}{L^2}, \quad z, w \in \D.
            \end{equation*}
            is positive semi-definite.
    
            We consider $f(z) = z^n, \ n \ge 0$. If $c_n = 0$, then for any $L > 0$ the kernel
            \begin{equation*}
                \k(z, w) - \frac{(z \bar w)^n}{L^2}
            \end{equation*}
            is not positive semi-definite by Lemma \ref{lem: k positive c positive}, which means $z^n \notin \H$. If $c_n > 0$, then the kernel
            \begin{equation*}
                \k(z, w) - c_n (z \bar w)^n
            \end{equation*}
            is positive semi-definite by Lemma \ref{lem: k positive c positive}, so $z^n \in \H$ and $|| z^n ||^2 \le \frac{1}{c_n}$. But for $L^2 < \frac{1}{c_n}$ the kernel
            \begin{equation*}
                \k(z, w) - \frac{(z \bar w)^n}{L^2}
            \end{equation*}
            is not positive semi-definite by Lemma \ref{lem: k positive c positive}. We conclude that $|| z^n ||^2 = \frac{1}{c_n}$. 
            
            If $z^n, z^m \in \H, \ n \ne m$, then, by rotation-invariance, for any $\xi \in \TT$
            \begin{equation*}
                \langle z^n, z^m \rangle_{\H} = \langle (\xi z)^n, (\xi z)^m \rangle_{\H} = \xi^{n - m} \langle z^n, z^m \rangle_{\H}.    
            \end{equation*}
            Thus, $z^n \perp z^m$ in $\H$.
    
            Denote by $\H_{pol}$ the subspace of $\H$ spanned by $z^n$, $c_n \ne 0$. It remains to prove that the orthogonal complement of $\H_{pol}$ in $\H$ is $0$. Suppose $f \in \H_{pol}^{\perp}$. Note that for any $z \in \D$ $\k_z \in \H_{pol}$, so $f(z) = \langle f, \k_z \rangle = 0$, hence, $f = 0$. 
        \end{proof}
    
        Note that for a general RKHS $\H$ of analytic functions on $\D$ even if polynomials belong to $\H$, they are not necessarily dense in $\H$.
            As an example, consider the Hardy space $H^2(\D)$ and take any $f \notin H^2(\D)$, analytic in $\D$. We put $\H = H^2(\D) \oplus \C f$ with the direct sum inner product, i.e.,
            \begin{equation*}
                \langle g + af, h + bf \rangle_{\H} = \langle g, h \rangle_{H^2(\D)} + a \bar b, \quad g, h \in H^2(\D), \, a, b \in \C.
            \end{equation*}
            Then $\H$ is a Hilbert space of functions analytic in $\D$. Moreover, it is an RKHS with kernel
            \begin{equation*}
                \k^{\H}(z, w) = \frac{1}{1 - z \bar w} + f(z) \overline{f(w)}, \quad z, w \in \D.
            \end{equation*}
            It remains to notice that by construction $f \in \H$ is orthogonal to polynomials. Hence, polynomials are not dense in $\H$.

    \section{Proof of Theorem \ref{thrm: isometric embedding}}
        \begin{theorem*}
            Let $\H$ be a rotation-invariant irreducible complete Pick space of analytic functions on the unit disc $\D$ with kernel $\k$.
            Then the embedding dimension $d$ is finite if and only if 
            \begin{equation*}
                \frac{1}{\k(0, 0)} - \frac{1}{\k(z, w)}
            \end{equation*}
            is a polynomial in $z \bar w$. Moreover, $d$ is exactly the number of non-zero coefficients of this polynomial.
        \end{theorem*}
        \begin{proofof}[thrm: isometric embedding]
            We are looking for the smallest integer $d$ such that there is a
            map $b: \D \to \B_d$ and a rescaling function $\de: \D \to \C$ such that
            \begin{equation} \label{eq: kernel to drury-arveson kernel}
                \k(z, w) = \frac{\de(z) \overline{\de(w)}}{1 - \langle b(z), b(w) \rangle}.
            \end{equation}
            We can always compose $b$ with an automorphism of $\B_d$, which will give us a different embedding $b$ with a different rescaling function $\de$ but with the same dimension $d$. In particular, to determine the embedding dimension for a rotation-invariant $\H$ on $\D$ it is enough to consider only embeddings $b$ with $b(0) = 0$. 
    
            By Theorem \ref{thrm: rotation inv to k}, 
            \begin{equation*}
                \k(z, w) = \sum_{n \ge 0} c_n (z \bar w)^n,
            \end{equation*}
            where $c_n = \frac{1}{|| z^n ||^2_{\H}}$, $z^n \in \H$ and $c_n = 0$, $z^n \notin \H$. Since $\H$ is irreducible, $c_0 = \k(0, 0) \ne 0$. Hence, we can rescale $\H$ by a positive constant so that $c_0 = 1$. Note that this means $\k_0(z) = 1 = \k(0, 0)$.
            
            Putting $w = 0$ into \eqref{eq: kernel to drury-arveson kernel} together with $b(0) = 0$ we see that $\de(z) \overline{\de(0)} = 1$, which implies that $\de(z) = \xi$ for some $\xi \in \TT$. Substituting it back to \eqref{eq: kernel to drury-arveson kernel} we get
            \begin{equation} \label{eq: kernel to DA without delta}
                \k(z, w) = \frac{1}{1 - \langle b(z), b(w) \rangle}.   
            \end{equation}      
            
            Since $c_0 = 1$,
            \begin{equation*}
                \frac{1}{\k(z, w)} = 1 - \sum_{n \ge 1} r_n (z \bar w)^n.
            \end{equation*}
            We conclude that
            \begin{equation*}
                1 - \frac{1}{\k(z, w)} = \sum_{n \ge 1} r_n (z \bar w)^n.
            \end{equation*}
            From \eqref{eq: kernel to DA without delta} we get
            \begin{equation*}
                1 - \frac{1}{\k(z, w)} = \langle b(z), b(w) \rangle,
            \end{equation*}
            whence
            \begin{equation} \label{eq: 1 - 1/k = b}
                \langle b(z), b(w) \rangle = \sum_{n \ge 1} r_n (z \bar w)^n.   
            \end{equation}
            Since $b$ is analytic and $b(0) = 0$, 
            \begin{equation*}
                b(z) = \sum_{n \ge 1} b_n z^n,
            \end{equation*}
            where $b_n \in \B_d$, so that
            \begin{equation*}
                \langle b(z), b(w) \rangle = \sum_{n, m \ge 1} \langle b_n, b_m \rangle z^n \bar w^m.
            \end{equation*}
            It follows from \eqref{eq: 1 - 1/k = b} that the vectors $b_n$ are orthogonal and $||b_n||^2 = r_n$. 
            
            We conclude that if $1 - 1/\k$ is not a polynomial, i.e., infinitely many of the $r_n \ne 0$, then there are infinitely many non-zero orthogonal vectors $b_n \in \B_d$, so $d = \infty$. 
            
            If $1 - 1/\k$ is a polynomial and $r_{n_1}, \ldots, r_{n_k}$ are the non-zero coefficients, then $b_{n_1}, \ldots, b_{n_k}$ are non-zero and orthogonal, whence $d \ge k$. On the other hand, $d \le k$, since we can take        
            \begin{equation*}
                b(z) = \left(\sqrt{r_{n_1}} z^{n_1}, \ldots, \sqrt{r_{n_k}} z^{n_k} \right).   
            \end{equation*}    
        \end{proofof}
            
        
    \section{Proof of Theorem \ref{thrm: analytic 1 - 1/k}}
        \begin{theorem*}
            Let $\H$ be an irreducible complete Pick space of analytic functions on the unit disc $\D$ with kernel $\k$.
            Suppose 
            \begin{equation*}
                \frac{1}{\k(0,0)} - \frac{1}{\k(z, w)} = q(z \bar w)   
            \end{equation*} 
            for $q$ analytic in $R \D$ for some $R > 1$ with $q'(0) \ne 0$. If $\H$ does not extend to a larger disc, then $\H = H^2(\D)$ with equivalent norms.    
        \end{theorem*}
        \begin{proofof}[thrm: analytic 1 - 1/k]
            We rescale $\H$ by a positive constant so that $\k(0, 0) = 1$. The space $\H$ is clearly rotation-invariant,
            \begin{equation*}
                \k(z, w) = 1 + \sum_{n \ge 1} c_n (z \bar w)^n, \quad c_n \ge 0.
            \end{equation*}
            We need to prove that there are constants $0 < \e < M < \infty$ such that 
            \begin{equation*}
                \e < c_n < M, \quad n \ge 1.    
            \end{equation*}
            Since $\H$ is complete Pick,
            \begin{equation*}
                1 - \frac{1}{\k(z, w)} = \sum_{n \ge 1} r_n (z \bar w)^n, \quad r_n \ge 0, \ \sum_{n \ge 1} r_n \le 1.
            \end{equation*}
            Notice that
            \begin{equation*}
                \left( 1 - \sum_{m \ge 1} r_m z^m \right) \left( 1 + \sum_{n \ge 1} c_n z^n \right) = 1, \quad z \in \D.
            \end{equation*}
            Comparing the coefficients of $z^n, \, n \ge 1$ we get
            \begin{equation} \label{eq: r to c recurrence}
                c_n = \sum_{m = 0}^{n - 1} c_m r_{n - m}, \quad n \ge 1.  
            \end{equation}
            In particular, $c_n \ge c_{n - 1} r_1, \ n \ge 1$. Since $c_0 = 1$ and $r_1 > 0$ we conclude, by induction, that $c_n > 0, \ n \ge 0$. Similarly, applying induction, we get $c_n \le 1, \ n \ge 0$. Indeed, $c_0 = 1$, and if $c_0, \ldots, c_{n - 1} \le 1$, then
            \begin{equation*}
                c_n = \sum_{m = 0}^{n - 1} c_m r_{n - m} \le \sum_{m = 0}^{n - 1} r_{n - m} \le \sum_{m = 1}^{\infty} r_m \le 1.
            \end{equation*}
    
            It remains to show that $c_n$ are bounded from below for $n \to \infty$. Note that
            \begin{equation} \label{eq: mu = q'(1)}
                \mu = \sum_{n \ge 1} n r_n = q'(1) < \infty. 
            \end{equation}
            We also claim
            \begin{equation} \label{eq: 1 = q(1)}
                \sum_{n \ge 1} r_n = q(1) = 1.
            \end{equation}
            If not, then $q(1) < 1$, hence, $|q(z)| \le q(1) < 1, \, z \in \D$. Together with $q$ being analytic in $R \D$ we get that there is $\tilde R \in (1, R)$ such that $|q(z)| < 1, \, z \in \tilde R \D$. Thus,
            \begin{equation*}
                \k(z, w) = \frac{1}{1 - q(z \bar w)}
            \end{equation*}
            extends analytically to $\tilde R \D$, a contradiction with the fact that $\H$ does not extend to a larger disc.
    
            Together \eqref{eq: r to c recurrence}, \eqref{eq: mu = q'(1)}, \eqref{eq: 1 = q(1)} and $c_1 > 0$ imply, by the Erd\"os-Feller-Pollard theorem, \cite[Chapter XIII, Section 11]{Feller1968}, that $c_n \to 1/\mu, \ n \to \infty$. Since $1/\mu > 0$ and $c_n > 0, \, n \ge 0$ we conclude that there is $\e > 0$ such that $c_n > \e, \, n \ge 0$. This finishes the proof.
        \end{proofof}
            
        
    \section{Proof of Theorem \ref{thrm: rational q}}  
        \begin{theorem*}
            Let $\H$ be an irreducible complete Pick space of analytic functions on the unit disc $\D$ with kernel $\k$.
            Suppose 
            \begin{equation*}
                \frac{1}{\k(0,0)} - \frac{1}{\k(z, w)} = q(z \bar w)   
            \end{equation*} 
            for a rational function $q$ with $q'(0) \ne 0$. If $\H$ does not extend to a larger disc, then $\H = H^2(\D)$ with equivalent norms.  
        \end{theorem*}
        \begin{proofof}[thrm: rational q]
            We use the notation from the proof of Theorem \ref{thrm: analytic 1 - 1/k} and rescale so that $\k(0,0) = 1$.
        
            We have $q(1) = \sum_{n \ge 1} r_n \le 1$, so that $|q(z)| \le 1, \ z \in \D$. This means that the poles of $q$ lie outside $\overline{\D}$. Since there are finitely many of them, $q$ is analytic in a larger disc and we can apply Theorem \ref{thrm: analytic 1 - 1/k}.
        \end{proofof}
    
    \section{Proof of Theorem \ref{thrm: embedding dimension of weighted}}
        Recall that the weighted Hardy space $\H_s, \, s \in \R$ on $\D$ is 
        \begin{equation*}
            \H_s = \left\{ f(z) = \sum_{n \ge 0} a_n z^n: \: ||f||^{2}_{s} = \sum_{n \ge 0} (1 + n)^{-s} |a_n|^2 < \infty \right\}.
        \end{equation*}
        \begin{theorem*}
            For $s = 0$, the embedding dimension of $\H_0$ is $d = 1$.
            For $s < 0$, the embedding dimension of $\H_s$ is $d = \infty$.
        \end{theorem*}
        \begin{proofof}[thrm: embedding dimension of weighted]
            The statement for $s = 0$ is evident, since the space $\H_0$ is exactly the Drury-Arveson space $H^2_d$ with $d = 1$, meaning that $b(z) = z$ is the map in question.  
            
            Let us now fix $s < 0$. Note that the kernel is given by
            \begin{equation*} \label{eq: weighted Hardy kernel}
                \k(z, w) = \sum_{n \ge 0} (1 + n)^{s} (z \bar w)^n, \quad z,w \in \D.
            \end{equation*}
            If $d < \infty$, by Theorem \ref{thrm: isometric embedding}, we must have
            \begin{equation*}
                1 - \frac{1}{\k(z, w)} = p(z \bar w), 
            \end{equation*}
            for some polynomial $p$, $p(0) = 0$. Note that $p'(0) \ne 0$. 
            But then, by Theorem \ref{thrm: rational q}, we must have $\H_s = H^2(\D)$ with equivalent norms, which contradicts the fact that $||z^n||_{\H_s} = (1 + n)^{-s} \to \infty$.    
        \end{proofof}

    \section{Proof of Theorem \ref{thrm: Hs isnt analytic disc}}
        \begin{theorem*}
            Suppose $V$ is an analytic disc attached to the unit sphere, $f$ is the embedding map of $V$. Then for $s < 0$
            \begin{equation*}
                \H_s \ne \H_f.
            \end{equation*}    
        \end{theorem*}
        \begin{proofof}[thrm: Hs isnt analytic disc]
            Suppose the opposite is true, i.e., $\H_s = \H_f$. 
            First, by Theorem \ref{crl: xi = zet}, $f$ has to be injective, since $z \in \H_s$ is injective and continuous up to $\overline{\D}$. For injective $f$, by Theorem \ref{thrmm: injective embedding}, $\H_f = H^2(\D)$ while $\H_s \ne H^2(\D)$, which gives a contradiction.     
        \end{proofof}
         
\chapter{Conclusion}

This thesis has delved into the isomorphism problem for multiplier algebras associated with varieties in the unit ball, focusing specifically on the case of analytic discs attached to the unit sphere. 
Our results have provided new insights into the relationship between the geometric properties of these varieties, such as their self-crossing types, and the algebraic structure of their associated multiplier algebras.

We established (Theorem \ref{thrm: algebras to self-crossings}) that for analytic discs with different self-crossing types, the corresponding multiplier algebras are non-isomorphic, thus offering a coarse criterion to distinguish between non-isomorphic cases.
However, the question of whether the same self-crossing type necessarily implies isomorphism of the multiplier algebras remains open and warrants further investigation.
Additionally, we have proven a rigidity result (Theorem \ref{thrm: multiplier isomorphism two points to two auto}) for analytic discs with a single self-crossing, which states that, in this case, there are always at most two candidates for an isomorphism map.

We also examined the embedding dimension problem for complete Pick spaces. 
We considered rotation-invariant spaces on the unit disc and established a general result (Theorem \ref{thrm: isometric embedding}) that explicitly relates the embedding dimension with the kernel of the space. 
We showed (Theorem \ref{thrm: rational q}, Theorem \ref{thrm: analytic 1 - 1/k}) that under mild conditions on the embedding map the resulting rotation-invariant space on the unit disc is, in fact, the Hardy space with an equivalent norm.
By using these two results, we were able to demonstrate (Theorem \ref{thrm: embedding dimension of weighted}) that the embedding dimension for certain weighted Hardy-type spaces is infinite. 
Lastly, we made a connection between the embedding dimension and the isomorphism problem. We showed (Theorem \ref{thrm: Hs isnt analytic disc}) that weighted Hardy-type spaces cannot arise from analytic discs attached to the unit sphere.


%
%


%
%
\makebackmatter 



\begin{thebibliography}{KMS13}

\bibitem[AM00]{Agl}
J.~Agler and J.~E. McCarthy.
\newblock Complete {Nevanlinna–Pick} kernels.
\newblock {\em Journal of Functional Analysis}, 175(1):111--124, 2000.

\bibitem[AM02]{AglBook}
J.~Agler and J.~E. McCarthy.
\newblock {\em {Pick} interpolation and {Hilbert} function spaces}, volume \textbf{44}.
\newblock American Mathematical Society, 2002.

\bibitem[APV03]{Vin}
D.~Alpay, M.~Putinar, and V.~Vinnikov.
\newblock A {Hilbert} space approach to bounded analytic extension in the ball.
\newblock {\em Communications on Pure and Applied Analysis}, 2(2):139--145, 2003.

\bibitem[ARS08]{ARS08}
N.~Arcozzi, R.~Rochberg, and E.~Sawyer.
\newblock {Carleson} measures for the {Drury-Arveson} {Hardy} space and other {Besov-Sobolev} spaces on complex balls.
\newblock {\em Advances in Mathematics}, 218(4):1107--1180, 2008.

\bibitem[Chi89]{chirka1989complex}
E.~M. Chirka.
\newblock {\em Complex Analytic Sets}, volume~46.
\newblock Springer Science \& Business Media, 1989.

\bibitem[DHS15]{Dav}
K.~R. Davidson, M.~Hartz, and O.~M. Shalit.
\newblock Multipliers of embedded discs.
\newblock {\em Complex Analysis and Operator Theory}, 9(2):287--321, 2015.

\bibitem[DRS11]{DRS11}
K.~R. Davidson, C.~Ramsey, and O.~M. Shalit.
\newblock The isomorphism problem for some universal operator algebras.
\newblock {\em Advances in Mathematics}, 228(1):167--218, 2011.

\bibitem[DRS15]{DSR15}
K.~R. Davidson, C.~Ramsey, and O.~M. Shalit.
\newblock Operator algebras for analytic varieties.
\newblock {\em Transactions of the American Mathematical Society}, 367(2):1121--1150, 2015.

\bibitem[Fel68]{Feller1968}
W.~Feller.
\newblock {\em An Introduction to Probability Theory and its Applications, Volume 1}.
\newblock J. Wiley \& Sons: New York, 1968.

\bibitem[Har12]{HarTop}
M.~Hartz.
\newblock Topological isomorphisms for some universal operator algebras.
\newblock {\em Journal of Functional Analysis}, 263(11):3564--3587, 2012.

\bibitem[Har17]{hartz2017isomorphism}
M.~Hartz.
\newblock On the isomorphism problem for multiplier algebras of {Nevanlinna-Pick} spaces.
\newblock {\em Canadian Journal of Mathematics}, 69(1):54--106, 2017.

\bibitem[Har22]{Har}
M.~Hartz.
\newblock Embedding dimension of the {Dirichlet} space.
\newblock {\em New York Journal of Mathematics}, 28, 2022.

\bibitem[Har23]{hartz2023invitation}
M.~Hartz.
\newblock An invitation to the {Drury-Arveson} space.
\newblock In {\em Lectures on analytic function spaces and their applications}, pages 347--413. Springer, 2023.

\bibitem[KMS13]{Kerr}
M.~Kerr, J.~E. McCarthy, and O.~M. Shalit.
\newblock On the isomorphism question for complete {Pick} multiplier algebras.
\newblock {\em Integral Equations and Operator Theory}, 76(1):39--53, 2013.

\bibitem[MS17]{mccarthy2017spaces}
J.~E. McCarthy and O.~M. Shalit.
\newblock Spaces of {Dirichlet} series with the complete {Pick} property.
\newblock {\em Israel Journal of Mathematics}, 220:509--530, 2017.

\bibitem[Pic15]{pick1915}
G.~Pick.
\newblock {\"U}ber die {Beschr{\"a}nkungen} analytischer {Funktionen}, welche durch vorgegebene {Funktionswerte} bewirkt werden.
\newblock {\em Mathematische Annalen}, 77(1):7--23, 1915.

\bibitem[PR16]{paulsen2016introduction}
V.~I. Paulsen and M.~Raghupathi.
\newblock {\em An introduction to the theory of reproducing kernel {Hilbert} spaces}, volume 152.
\newblock Cambridge university press, 2016.

\bibitem[Qui94]{quiggin1994generalisations}
P.~P. Quiggin.
\newblock {\em Generalisations of {Pick}'s Theorem to Reproducing Kernel {Hilbert} Spaces}.
\newblock Lancaster University (United Kingdom), 1994.

\bibitem[Roc17]{Rochberg2017301}
R.~Rochberg.
\newblock Is the {Dirichlet} space a quotient of {$DA_n$}?
\newblock {\em Contemporary Mathematics}, 693:301 – 307, 2017.

\bibitem[Roc19]{rochberg2019complex}
R.~Rochberg.
\newblock Complex hyperbolic geometry and {Hilbert} spaces with complete {Pick} kernels.
\newblock {\em Journal of Functional Analysis}, 276(5):1622--1679, 2019.

\bibitem[Sha15]{shalit2015operator}
O.~M. Shalit.
\newblock Operator theory and function theory in {Drury-Arveson} space and its quotients.
\newblock In {\em Operator Theory}, pages 1125--1180. Springer Basel, 2015.

\bibitem[SS16]{Sal}
G.~Salomon and O.~M. Shalit.
\newblock The isomorphism problem for complete {Pick} algebras: a survey.
\newblock In {\em Operator Theory, Function Spaces, and Applications: International Workshop on Operator Theory and Applications, Amsterdam, July 2014}, pages 167--198. Springer, 2016.

\end{thebibliography}
\end{document}